\DeclareMathOperator{\Gal}{Gal}
\DeclareMathOperator{\GCD}{gcd}
\DeclareMathOperator{\ord}{ord}
\DeclareMathOperator{\CL}{Cl}
\DeclareMathOperator{\tor}{tors}
\DeclareMathOperator{\Hom}{Hom}
\DeclareMathOperator{\T}{\mathbb T}
\DeclareMathOperator{\C}{\mathcal C}
\DeclareMathOperator{\I}{\mathcal I}
\DeclareMathOperator{\PP}{\mathcal P}
\DeclareMathOperator{\Pl}{Pl}
\newtheorem{cor}{Corollary}
\newtheorem{ex}{Example}
\newtheorem{lem}{Lemma}
\newtheorem{defin}{Definition}
\newtheorem{theorem}{Theorem}
\newcommand{\overbar}[1]{\mkern 1.5mu\overline{\mkern-1.5mu#1\mkern-1.5mu}\mkern 1.5mu}
\begin{document}
\title{On Abelianized Absolute Galois Group of Global Function Fields}
\author[*]{Bart de Smit \\ 
\texttt{desmit@math.leidenuniv.nl}
}

\author[*]{\\Pavel Solomatin \\
\texttt{p.solomatin@math.leidenuniv.nl}
}

\affil[*]{Leiden University, Mathematical Department\\
Niels Bohrweg 1, 2333 CA Leiden}

\date{ Leiden, 2017}
\maketitle

\begin{abstract}
The main purpose of this paper is to describe the abelian part $\mathcal G^{ab}_{K}$ of the absolute Galois group of a global function field $K$ as pro-finite group. We will show that the characteristic $p$ of $K$ and the non $p$-part of the class group of $K$ are determined by $\mathcal G^{ab}_{K}$. The converse is almost true: isomorphism type of $\mathcal G_K^{ab}$ as pro-finite group is determined by the invariant $d_K$ of the constant field $\mathbb F_q$ introduced in first section and the non $p$-part of the class group.
\end{abstract}

\textbf{\\ \\ \\ \\ \\ Acknowledgements:} 
This paper is a part of the PhD research of the second author under scientific direction of the first author. The second author was supported by the ALGANT scholarship during this research. Both authors would like to thank professors Hendrick Lenstra and Peter Stevenhagen for helpful discussions during the project.

\newpage
\section{Introduction}
Let $K$ be a global function field, i.e. field of functions on a smooth projective geometrically connected curve $X$ defined over a finite field $\mathbb F_q$, where $q=p^n$, $p$ is prime. The famous theorem of Uchida \cite{Uchida} states that the geometric isomorphism class of $X$ is determined by the isomorphism class of the absolute Galois group $\mathcal G_K = \Gal(K^{sep}:K)$ considered as topological group. One of the essential step in the Uchida's proof is to recover from $\mathcal G_K$ its abelian part $\mathcal G_K^{ab}$ with some additional data, like decomposition and inertia subgroups. The following questions are natural to ask: what kind of information one could recover from the isomorphism class of the pro-finite abelian group $\mathcal G^{ab}_{K}$? More concretely, does the abelian part of the absolute Galois group determine the global function field $K$ up to isomorphism? If not which function fields share the same $\mathcal G^{ab}_{K}$ for some fixed isomorphism class of $\mathcal G^{ab}_{K}$?

For a global function field $K$ of characteristic $p$ with the exact constant field $\mathbb F_{q}$, $q=p^n$ we define the invariant $d_K$ as a natural number such that  $n = p^{k} d_{K}$ with $\gcd(d_{K}, p)=1$. Let $\CL^{0}(K)$ denotes the degree zero part of the class-group of $K$. In other words $\CL^{0}(K)$ is the abelian group of $\mathbb F_q$-rational points of the Jacobian variety associated to the curve $X$. 
The main purpose of this paper is to prove the following result: 

\begin{theorem}\label{main}
Suppose $K$ and $K'$ are two global function fields, then $\mathcal G^{ab}_K \simeq \mathcal G^{ab}_{K'}$ as pro-finite groups if and only if the following three conditions hold:
\begin{enumerate}
	\item $K$ and $K'$ share the same characteristic $p$;
	\item Invariants $d_K$ and $d_{K'}$ coincide: $d_K = d_{K'}$; 
	\item The non $p$-parts of class-groups of $K$ and $K'$ are isomorphic: $$\CL^{0}_{non-p} (K) \simeq \CL^{0}_{non-p} (K').$$
\end{enumerate}
In particular, two function fields with the same exact constant filed $\mathbb F_q$ have isomorphic $\mathcal G^{ab}_{K}$ if and only if they have isomorphic $\CL^{0}_{non-p} (K)$.
\end{theorem}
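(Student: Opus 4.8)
The plan is to compute the isomorphism type of $\mathcal G^{ab}_K$ explicitly via global class field theory and then read off the three invariants. By the reciprocity law for $K$, the Artin map identifies $\mathcal G^{ab}_K$ with the profinite completion of the idele class group $C_K=\mathbb A_K^*/K^*$. Since $K$ has no archimedean places, $C_K$ is totally disconnected, and F.~K.~Schmidt's theorem provides a divisor of degree one, so the degree map gives a (topologically) split exact sequence $0\to C_K^0\to C_K\xrightarrow{\deg}\mathbb Z\to 0$ with $C_K^0$ compact. Passing to profinite completions yields
\[
\mathcal G^{ab}_K \;\cong\; C_K^0 \times \hat{\mathbb Z},
\]
so the problem reduces to the profinite abelian group $C_K^0$, which sits in the unit--class-group extension
\[
0 \longrightarrow \Bigl(\prod_v \mathcal O_v^*\Bigr)\Big/\mathbb F_q^* \longrightarrow C_K^0 \longrightarrow \CL^0(K) \longrightarrow 0 .
\]

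First I would analyse the two halves of $C_K^0$ separately. For the pro-$p$ part, each local unit group contributes its principal units, and $\prod_v(1+\mathfrak m_v)$ is a free profinite $\mathbb Z_p$-module of countably infinite rank; the key point is that an extension of a finite $p$-group by the torsion-free module $\mathbb Z_p^{\mathbb N}$ can always be re-absorbed into $\mathbb Z_p^{\mathbb N}$ while staying torsion-free (using the nonsplit ``multiplication by $p^b$'' extensions on the free generators). This gives $(C_K^0)^{(p)}\cong\mathbb Z_p^{\mathbb N}$ independently of $K$, and explains structurally why the $p$-part of the class group leaves no trace. For the prime-to-$p$ part the local contribution is $\prod_v\mu_{q^{\deg v}-1}$. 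Writing $f=\ord_\ell q$ and using the lifting-the-exponent identity $v_\ell(q^{fm}-1)=v_\ell(q^f-1)+v_\ell(m)$ together with the fact that the number $N_d$ of degree-$d$ places is positive for all large $d$, I would show the $\ell$-part equals $\prod_{k\ge a(\ell)}(\mathbb Z/\ell^k)^{(\aleph_0)}$ with $a(\ell)=v_\ell(q^f-1)$. A short computation with $q=p^n$, $n=p^kd_K$, shows $a(\ell)$ depends only on $p$, $\ell$ and $d_K$, not on the full exponent $n$; this is precisely what makes $d_K$ rather than $q$ the relevant invariant of the constant field.

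With these pieces in hand, the forward implication becomes a matching exercise: conditions (1) and (2) force the ``universal'' unit parts of $K$ and $K'$ to be isomorphic, condition (3) matches the class-group data, and it remains only to check that the isomorphism type of the extension $C_K^0$ depends on nothing but the isomorphism types of its two outer terms. For the converse I would recover each invariant intrinsically: $p$ is the unique prime $\ell$ for which the $\ell$-primary part of $\mathcal G^{ab}_K$ is torsion-free (it is $\mathbb Z_p^{\mathbb N}$, while every other primary part carries the roots-of-unity torsion), and $a(\ell)$ — hence, by the computation above, $d_K$ — is one more than the minimal height of a nonzero $\ell$-torsion element. The remaining and genuinely hard step is to extract $\CL^0_{non\text{-}p}(K)$ from the way it perturbs the otherwise $(p,d_K)$-determined prime-to-$p$ part. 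Because each cyclic order $\ell^k$ with $k\ge a(\ell)$ already occurs with countably infinite multiplicity, a finite class-group contribution of order $\ge\ell^{a(\ell)}$ cannot be detected by counting cyclic factors or by comparing naive Ulm invariants; it can register only through finer data of the extension — e.g. through elements of infinite height, equivalently a nontrivial first Ulm subgroup of the Pontryagin dual, produced by a ``diagonally nonsplit'' extension class. The crux of the proof is therefore a structural lemma on profinite abelian groups showing that the arithmetic extension above is nonsplit in exactly the manner that realises $\CL^0_{non\text{-}p}(K)$ as a well-defined isomorphism invariant of $C_K^0$, insensitive both to the place-distribution $(N_d)$ and to the choice of degree-one divisor. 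Granting that lemma, both directions follow, and the ``same $\mathbb F_q$'' statement is immediate, since then $p$ and $d_K$ agree automatically and only condition (3) remains.
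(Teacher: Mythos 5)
Your outline reproduces the paper's own strategy step by step (Artin map, the splitting $\mathcal G^{ab}_K\cong \C^{0}_K\times\widehat{\mathbb Z}$, the unit--class-group sequence $1\to(\prod_v\mathcal O_v^{\times})/\mathbb F_q^{\times}\to \C^{0}_K\to\CL^{0}(K)\to 1$, the separate treatment of the $p$-part and the prime-to-$p$ part, and the computation showing the unit part depends only on $p$ and $d_K$), but the two load-bearing results are asserted rather than proved, and they are precisely where all the work in the paper lies. First, you never establish the arithmetic fact that \emph{every} torsion element of $\C^{0}_K$ lies in the unit part $\mathcal T_q$ (equivalently, that the sequence is totally non-split). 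The paper derives this from the Artin--Tate theorem that an idele which is locally an $\ell$-th power is globally an $\ell$-th power; without it you cannot recover $\CL^{0}_{non-p}(K)$ as the torsion of $\mathcal G^{ab}_K/\overline{\mathcal G^{ab}_K[\tor]}$, and your $p$-part argument also silently uses it, since ``staying torsion-free'' is a hypothesis your re-absorption claim needs but which only this theorem supplies (a split extension by a finite $p$-group is a counterexample otherwise). Second, for the ``if'' direction you write that ``it remains only to check that the isomorphism type of the extension $\C^{0}_K$ depends on nothing but the isomorphism types of its two outer terms.'' As stated this is false (split and non-split extensions share outer terms), and the correct statement --- uniqueness of the middle term among extensions whose dual satisfies $A=\cap_{n\ge 1}nB$ --- is the paper's Theorem \ref{group}, whose existence-and-uniqueness proof (the ``large overlap'' and interleaved-partition argument) is the genuinely hard content. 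You explicitly defer this (``Granting that lemma\dots''), so the proposal is an accurate roadmap of the paper but not a proof.

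A smaller but real error: your lifting-the-exponent formula $a(\ell)=\ord_\ell(q^{f}-1)$ with $v_\ell(q^{fm}-1)=v_\ell(q^{f}-1)+v_\ell(m)$ breaks down at $\ell=2$ when $q\equiv 3 \bmod 4$. There $\mathcal T_q$ acquires extra $\mathbb Z/2\mathbb Z$ summands (the $m=1$ exceptional case in Theorem \ref{tortheorem}), and the paper must introduce $p^{\star}$ and the exceptional case $s_2=1$ in Lemma \ref{dkforml} to recover $d_K$ correctly; your height-based recovery of $d_K$ would return the wrong answer in this case unless patched the same way.
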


This theorem provides some answers to the above questions. 
 For example, we have: 
\begin{cor}\label{corl1}
Let $K$ be the rational function field (with genus zero) over fixed constant field $\mathbb F_q$ and let $E$ be an elliptic function field (with genus one) defined over the same constant field, such that\footnote{the existence of such field is guaranteed by the Waterhouse theorem, see the last section.} $\# \CL^{0}(E) =  q$. Then there exists isomorphism of topological groups $\mathcal G^{ab}_{K} \simeq \mathcal G^{ab}_{E}$. In particular, the genus $g$ of $K$ and therefore the Dedekind zeta-function $\zeta_K(s)$ of $K$  are not determined by $\mathcal G^{ab}_{K}$ even if the constant field $\mathbb F_q$ is fixed. 

\end{cor}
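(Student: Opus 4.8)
The plan is to prove Corollary~\ref{corl1} by directly verifying the three conditions of Theorem~\ref{main} for the pair $(K, E)$, so that the isomorphism $\mathcal G^{ab}_{K} \simeq \mathcal G^{ab}_{E}$ follows immediately, and then to deduce the non-invariance of the genus and the zeta-function as a consequence. First I would check condition~(1): both $K$ and $E$ are defined over the same constant field $\mathbb F_q$, hence share the same characteristic $p$; this is immediate from the hypothesis. Condition~(2) is equally immediate, since the invariant $d_K$ depends only on the exact constant field $\mathbb F_q$ (it is extracted from the exponent $n$ in $q = p^n$ by writing $n = p^k d_K$ with $\gcd(d_K,p)=1$), and both fields have the same exact constant field $\mathbb F_q$ by assumption.

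The substance of the argument lies in condition~(3), the comparison of the non-$p$-parts of the degree-zero class groups. Since $K$ is the rational function field of genus zero, its Jacobian is trivial, so $\CL^{0}(K) = 0$ and therefore $\CL^{0}_{non-p}(K) = 0$. Thus I must show that $\CL^{0}_{non-p}(E) = 0$ as well, i.e. that the group $\CL^{0}(E)$ of $\mathbb F_q$-rational points of the elliptic curve associated to $E$ is a $p$-group. By hypothesis $\#\CL^{0}(E) = q = p^n$, so the order of this finite abelian group is a power of $p$; hence every element has order a power of $p$, the group is a $p$-group, and its non-$p$-part vanishes. This gives $\CL^{0}_{non-p}(E) = 0 = \CL^{0}_{non-p}(K)$, verifying condition~(3). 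With all three conditions satisfied, Theorem~\ref{main} yields the desired isomorphism $\mathcal G^{ab}_{K} \simeq \mathcal G^{ab}_{E}$ of topological groups.

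Finally I would address the ``in particular'' claim. The genus is $g=0$ for $K$ and $g=1$ for $E$, so the two fields have different genera despite sharing the same $\mathcal G^{ab}$; hence the genus is not recoverable from the isomorphism type of $\mathcal G^{ab}_{K}$. Since the Dedekind zeta-function $\zeta_K(s)$ of a function field determines (and is determined by) the genus together with finer arithmetic data encoded in its numerator, and in particular its degree as a rational function of $q^{-s}$ equals $2g$, the two zeta-functions $\zeta_K$ and $\zeta_E$ differ; therefore $\zeta_K(s)$ is likewise not determined by $\mathcal G^{ab}_{K}$. The only genuine obstacle in the whole argument is the existence of the elliptic curve $E$ with $\#\CL^{0}(E) = q$, which is not proved here but guaranteed by the Waterhouse theorem as noted in the footnote; I would simply cite that result (the existence of an elliptic curve over $\mathbb F_q$ with a prescribed number of rational points in the admissible Hasse--Weil range, here the supersingular or suitable value giving order exactly $q$) and defer its statement to the final section.
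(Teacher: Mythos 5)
Your proposal is correct and follows essentially the same route as the paper: the paper also deduces Corollary~\ref{corl1} from Theorem~\ref{main} by noting that $\CL^{0}(K)$ is trivial for the genus-zero field, that $\CL^{0}(E) \simeq E(\mathbb F_q)$ has order $q=p^n$ and hence trivial non-$p$-part, and by invoking Waterhouse's theorem (with $N=1$ in the Hasse interval) for the existence of $E$. Your write-up merely makes explicit the verification of conditions (1)--(3), which the paper leaves implicit in its final section.
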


The above example also shows that:
\begin{cor}\label{corl2}
There are infinitely many function fields of the same characteristic $p$(but with different cardinality of the constant field) with isomorphic $\mathcal G^{ab}_{K}$.
\end{cor}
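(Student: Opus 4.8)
The plan is to deduce this directly from Theorem~\ref{main} by exhibiting an explicit infinite family, built from the same rational function fields that appear in Corollary~\ref{corl1}. The point is that for a rational function field $K=\mathbb{F}_q(t)$ the associated curve has genus zero, so its Jacobian is trivial and hence $\CL^{0}(K)=0$; in particular $\CL^{0}_{non-p}(K)=0$. Consequently condition~(3) of Theorem~\ref{main} holds automatically for any two rational function fields, no matter what their constant fields are, and the only work left is to arrange conditions~(1) and~(2) while letting the cardinality of the constant field grow without bound.

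First I would fix the characteristic $p$ and, for each integer $k\ge 0$, set $K_k=\mathbb{F}_{p^{p^k}}(t)$, so that $q_k=p^{p^k}$ and the exponent is $n_k=p^k$. Since the invariant $d_{K_k}$ is defined by writing $n_k=p^{k}\,d_{K_k}$ with $\gcd(d_{K_k},p)=1$, the choice $n_k=p^k$ forces $d_{K_k}=1$ for every $k$. Thus all the fields $K_k$ share the same characteristic $p$, the same invariant $d_{K_k}=1$, and the trivial (hence pairwise isomorphic) non-$p$ class group. Conditions~(1)--(3) of Theorem~\ref{main} therefore hold for every pair $K_k,K_\ell$, and the theorem yields $\mathcal G^{ab}_{K_k}\simeq\mathcal G^{ab}_{K_\ell}$ for all $k,\ell\ge 0$. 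Since the constant fields $\mathbb{F}_{p^{p^k}}$ have pairwise distinct cardinalities, this is the desired infinite family.

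I do not expect any genuine obstacle: the statement is an immediate application of Theorem~\ref{main}, and the only subtlety is the bookkeeping in the definition of $d_K$. One must choose the exponents $n_k$ so that their prime-to-$p$ parts are all equal even as $n_k\to\infty$; taking $n_k=p^k$ (equivalently fixing $d_{K_k}=1$) is the simplest such choice. More generally, fixing any value $d_0$ with $\gcd(d_0,p)=1$ and setting $n_k=p^k d_0$ works just as well, and combining this with a family of curves of fixed non-$p$ class group would produce further examples of the same phenomenon.
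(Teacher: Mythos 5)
Your proposal is correct and follows essentially the same route as the paper: fix the characteristic $p$, take constant fields $\mathbb F_{p^{p^k}}$ so that $d_K=1$ stays constant while $q$ grows, and apply Theorem~\ref{main}. The only (harmless) difference is that you use rational function fields alone, whereas the paper mixes the rational and elliptic fields of Corollary~\ref{corl1}; your version is in fact slightly more self-contained, since it never needs the Waterhouse existence result.
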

\begin{proof}
Fix a prime number $p$ and let $q= p^{p^{k}}$, where $k$ is a non-negative integer. Let $F_k$ and $E_k$ denote rational and elliptic function fields from the previous example with the exact constant field $\mathbb F_q$. Then, according to the our main theorem for any non-negative integers $k$, $l$ we have: $\mathcal G^{ab}_{K_{l}}~\simeq~\mathcal G^{ab}_{E_{k}}.$
\end{proof}

Applying some classical results about the two-part of $\CL^{0}(K)$ of hyper-elliptic function fields we will also show that:
\begin{cor}\label{corl3}
For any given $q$ with $p>2$ there are infinitely many isomorphism types of $\mathcal G_K^{ab}$ which could occur for function fields with the exact constant field $\mathbb F_q$. 
\end{cor}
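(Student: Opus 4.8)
The plan is to reduce the statement, via the ``in particular'' clause of Theorem~\ref{main}, to a purely arithmetic assertion about class groups, and then to realize that assertion using hyperelliptic function fields together with genus theory. Since the constant field $\mathbb{F}_q$ is fixed, both $p$ and the invariant $d_K = d_{\mathbb{F}_q}$ are the same for every $K$ under consideration, so Theorem~\ref{main} tells us that the isomorphism type of $\mathcal G_K^{ab}$ is determined by, and conversely determines, the isomorphism type of the finite abelian group $\CL^0_{non-p}(K)$. Thus it suffices to exhibit infinitely many function fields $K$ with exact constant field $\mathbb{F}_q$ whose groups $\CL^0_{non-p}(K)$ are pairwise non-isomorphic.

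Because $p>2$, the $2$-primary part of $\CL^0(K)$ is a direct factor of $\CL^0_{non-p}(K)$, so it is enough to produce fields whose class groups have unbounded $2$-rank; two such fields with distinct $2$-ranks automatically have non-isomorphic $\CL^0_{non-p}(K)$. For each integer $t\geq 1$ I would fix a squarefree polynomial $f_t(x)\in\mathbb{F}_q[x]$ of even degree that is a product of $t$ distinct monic irreducibles; such polynomials exist for every $t$ since $\mathbb{F}_q[x]$ contains infinitely many monic irreducible polynomials. Setting $K_t=\mathbb{F}_q(x)(\sqrt{f_t})$, the field defined by $y^2=f_t(x)$, the hypothesis $q$ odd guarantees that $K_t$ is a separable quadratic extension of the rational function field. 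As $f_t$ is squarefree of positive degree it is not a constant times a square in $\mathbb{F}_q(x)$, which forces the exact constant field of $K_t$ to be $\mathbb{F}_q$ rather than $\mathbb{F}_{q^2}$, so each $K_t$ is admissible.

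Finally I would invoke the classical genus theory for quadratic function fields. The places of $\mathbb{F}_q(x)$ that ramify in $K_t$ are exactly those dividing $f_t$, namely the $t$ places attached to its irreducible factors; since $\deg f_t$ is even the infinite place is unramified. The ambiguous-class-number formula then gives $\dim_{\mathbb{F}_2}\CL^0(K_t)[2]=t-1$, so the $2$-rank of $\CL^0(K_t)$ grows without bound with $t$. Hence the groups $\CL^0_{non-p}(K_t)$, and therefore by Theorem~\ref{main} the pro-finite groups $\mathcal G_{K_t}^{ab}$, split into infinitely many isomorphism classes, which is the assertion.

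The step I expect to be the main obstacle is the precise application of genus theory, that is, confirming that the $t$ ramified finite places contribute exactly $t-1$ independent classes in the degree-zero divisor class group $\CL^0(K_t)$ (the single relation arising from the principal divisor of $y$) and correctly accounting for the infinite place. Normalizing $\deg f_t$ to be even, so that the infinite place does not ramify, removes this ambiguity; and in any event only the unboundedness of the $2$-rank is needed, so even the coarser estimate that the $2$-rank is at least $t-2$ would suffice to finish the argument.
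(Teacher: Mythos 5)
Your proposal is correct and follows essentially the same route as the paper: both reduce via Theorem~\ref{main} to exhibiting unbounded $2$-rank of $\CL^0(K)$ over the fixed field $\mathbb F_q$, and both realize this with hyperelliptic fields $y^2 = f(x)$ where $f$ is a product of $t$ distinct monic irreducibles. The only difference is bookkeeping: the paper takes factors of strictly increasing degrees and cites a reference for the bound (two-rank $\geq t-2$), which is precisely the coarser genus-theory estimate you correctly note is sufficient, so your hedge about the exact value $t-1$ at the infinite place is immaterial.
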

\begin{proof}
See theorem \ref{maincor} from the last section.
\end{proof}

Unfortunately, the answer to the question about distribution of global fields over fixed constant field $\mathbb F_q$ sharing the same $\mathcal G^{ab}_{K}$ is not clear at the moment, since we don't know if there are infinitely many such fields with a given non $p$-part of the class group. In particular, it seems to be reasonable to state the following \textbf{conjecture:} \emph{there are infinitely many curves defined over fixed finite field $\mathbb F_q$, $q=p^n$ with the order of the group of $\mathbb F_q$-rational points of the Jacobian varieties associated to them to be a power of $p$}. If the conjecture is true then what is the proportion of such curves, say as $q$ fixed and $g$ tends to infinity?
 
Note that the proof of the theorem \ref{main} includes the explicit reconstruction of the invariants $p$, $d_K$ and $\CL^{0}_{non-p} (K)$ from $\mathcal G^{ab}_K$. More concretely, let $s_l(\mathcal G^{ab}_K)$ be the least integer $k$ such that $\mathcal G^{ab}_K$ has direct summand of the form $\mathbb Z/ l^{k}\mathbb Z$ and let $p^{\star}$ denotes $(-1)^{\frac{p-1}{2}} p $ if $p$ is odd and $p$ otherwise, then:
\begin{theorem}
Given the isomorphism class of the topological group $\mathcal G^{ab}_K$ we have: 
\begin{enumerate}
	\item The characteristic $p$ of $K$ is a unique prime such $\mathcal G^{ab}_K$ has no elements of order $p$; 
	\item The non-$p$  part $\CL^{0}_{non-p} (K)$ of the class-groups of $K$ is isomorphic to the torsion of the quotient $\mathcal G^{ab}_{K} / \overbar{\mathcal G^{ab}_{K}[\tor] } $, where $\overbar{\mathcal G^{ab}_{K}[\tor] }$ denotes the closure of the torsion subgroup of $\mathcal G^{ab}_K$ : $$\CL^{0}_{non-p} (K) \simeq (\mathcal G^{ab}_{K} / \overbar{\mathcal G^{ab}_{K}[\tor] })[\tor] .$$
	\item  The natural number $d_K$ is a unique number such that for any prime number $l \ne p$: $$\ord_l(d_K) = \begin{cases} 0, & \mbox{if } l=2 \mbox{ and } s_2(\mathcal G^{ab}_K)=1; \\ s_l(\mathcal G^{ab}_K) - \ord_l( (p^{\star})^{l-1} -1 ), & \mbox{ otherwise.} \end{cases}$$
	\end{enumerate}
\end{theorem}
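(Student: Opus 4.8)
The backbone of every reconstruction is class field theory. By the Artin reciprocity map, $\mathcal G^{ab}_K$ is the profinite completion of the idele class group $C_K=\mathbb A_K^\ast/K^\ast$; since $X$ carries a divisor of degree one (F.~K.~Schmidt), the degree map is onto $\mathbb Z$ and splits, and I would record
$$\mathcal G^{ab}_K \simeq \hat{\mathbb Z}\times C_K^{0},\qquad 1\to \textstyle\prod_v \mathcal O_v^\ast/\mathbb F_q^\ast \to C_K^{0}\to \CL^{0}(K)\to 1,$$
with $\mathbb F_q^\ast$ embedded diagonally and $\mathcal O_v^\ast\simeq\mathbb F_{q_v}^\ast\times U_v^{(1)}$, where $\mathbb F_{q_v}^\ast$ is cyclic of order $q^{\deg v}-1$ (prime to $p$) and $U_v^{(1)}=1+\mathfrak m_v$ is a torsion-free pro-$p$ group. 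Because $\prod_v\mathcal O_v^\ast/\mathbb F_q^\ast = R\times\prod_v U_v^{(1)}$ with $R:=\prod_v\mathbb F_{q_v}^\ast/\mathbb F_q^\ast$, the one structural fact I would isolate and prove first — and which I expect to be the real heart of the matter — is that the pro-$p$ part of $C_K^0$ is \emph{torsion-free}: the extension of $\CL^0(K)_p$ by the torsion-free module $\prod_v U_v^{(1)}$ absorbs the $p$-part of the class group without creating torsion (equivalently, the Galois group of the maximal abelian pro-$p$ extension is torsion-free, an Artin--Schreier--Witt phenomenon), and dually that $\overline{\mathcal G^{ab}_K[\tor]}$ is exactly $R$.

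\textbf{Part (1).} Granting this, part (1) is immediate: the only torsion of $\mathcal G^{ab}_K$ lies in $R$, whose factors have order prime to $p$, while the $\hat{\mathbb Z}$ and pro-$p$ parts are torsion-free; hence $\mathcal G^{ab}_K$ has no element of order $p$. Conversely, for each prime $\ell\neq p$ I would exhibit $\ell$-torsion: there is a place $v$ with $\ord_\ell(q)\mid\deg v$ (in fact infinitely many, by the place count from the zeta function of $X$), and then $\ell\mid q^{\deg v}-1$ yields an element of order $\ell$ in $\mathbb F_{q_v}^\ast\subset R$. Thus $p$ is precisely the unique prime for which $\mathcal G^{ab}_K$ has no torsion.

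\textbf{Part (2).} Using $\overline{\mathcal G^{ab}_K[\tor]}=R$ I would compute $\mathcal G^{ab}_K/R\simeq \hat{\mathbb Z}\times W$, where $W=C_K^0/R$ is the extension of $\CL^0(K)$ by the pro-$p$ kernel $\prod_v U_v^{(1)}$. The prime-to-$p$ part of $W$ is then just $\CL^0_{non-p}(K)$ (the pro-$p$ kernel contributes nothing off $p$), while its pro-$p$ part is torsion-free by the previous paragraph; as $\hat{\mathbb Z}$ adds only torsion-free factors, taking torsion gives $(\mathcal G^{ab}_K/\overline{\mathcal G^{ab}_K[\tor]})[\tor]\simeq\CL^0_{non-p}(K)$. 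The subtle point is that $\CL^0_{non-p}(K)$ appears as torsion of the \emph{quotient} although it is not torsion in $\mathcal G^{ab}_K$ itself; this is exactly what forces $\overline{\mathcal G^{ab}_K[\tor]}=R$ rather than the full preimage of the class group, and is why the structural input above is essential.

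\textbf{Part (3), and the main obstacle.} This is the computational core. From the structure the $\ell$-primary part of $\mathcal G^{ab}_K$ is $\mathbb Z_\ell$ times a product of cyclic groups $\mathbb Z/\ell^{a_v}$ with $a_v=\ord_\ell(q^{\deg v}-1)$ (again the finite class group being absorbed into these infinitely many factors), so that $s_\ell(\mathcal G^{ab}_K)=\min_v a_v=\ord_\ell(q^{d}-1)$ with $d=\ord_\ell(q)$, the minimum being attained at a place of degree $d$ (or of degree $dm$ with $\ell\nmid m$, which exists). Writing $q=p^n$, $f=\ord_\ell(p)$, one has $nd=\mathrm{lcm}(n,f)$. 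For odd $\ell$ the lifting-the-exponent lemma gives $s_\ell=\ord_\ell(p^{f}-1)+\ord_\ell(d_K)$ (using $\ell\nmid f$ and $\ell\neq p$), while the even exponent $\ell-1$ makes the sign in $p^\star$ irrelevant, $(p^\star)^{\ell-1}=p^{\ell-1}$, and a second application yields $\ord_\ell(p^{f}-1)=\ord_\ell((p^\star)^{\ell-1}-1)$; combining gives the stated formula. For $\ell=2$ I would compute $s_2$ by the $2$-adic lifting-the-exponent lemma according to the parity of $n$: the role of $p^\star=\pm p$ is precisely to select $\max(\ord_2(p-1),\ord_2(p+1))=\ord_2(p^\star-1)$, and the separate case $s_2=1$ corresponds to $n$ odd with $p\equiv3\pmod4$, where indeed $\ord_2(d_K)=0$. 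The main obstacle throughout is the structural input of the first paragraph — torsion-freeness of the pro-$p$ part and the precise shape of the $\ell$-parts, i.e. that the finite class group is absorbed so that neither $\overline{\mathcal G^{ab}_K[\tor]}$ nor the minimal cyclic summand $s_\ell$ is disturbed by it; once that is secured, all three reconstructions reduce to the elementary valuation computations above.
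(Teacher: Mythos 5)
You follow the paper's skeleton — the class-field-theoretic decomposition $\mathcal G^{ab}_K \simeq \C^{0}_K \oplus \widehat{\mathbb Z}$, the exact sequence $1 \to \prod_v \mathcal O^{\times}_v/\mathbb F_q^{\times} \to \C^{0}_K \to \CL^{0}(K) \to 1$, and the lifting-the-exponent arithmetic for part (3), including the role of $p^{\star}$ at $l=2$, all match — but there is a genuine gap at exactly the point you yourself flag as ``the real heart of the matter.'' The equality $\overline{\mathcal G^{ab}_K[\tor]} = R$ (your $R$ is the paper's $\mathcal T_q$) is not a consequence, ``dual'' or otherwise, of torsion-freeness of the pro-$p$ part: it requires that for \emph{every} prime $l \ne p$ no torsion element of $\C^{0}_K$ maps onto a nonzero element of $\CL^{0}_{l}(K)$, i.e.\ that the extension $1 \to \mathcal T_{q,l} \to \C^{0}_{K,l} \to \CL^{0}_{l}(K) \to 1$ is totally non-split. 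Artin--Schreier--Witt theory concerns only the prime $p$, and you offer no argument for the primes $l \ne p$. The paper proves this (its torsion theorem, section \ref{torsion}) by a global-to-local power argument: if $x \in \C^{0}_K$ is torsion, represent it by an idele whose $l$-th power is the principal idele of some $a \in K^{\times}$; then $a$ is everywhere locally an $l$-th power, hence globally an $l$-th power by [Theorem 1, Chapter 9] of \cite{ArtinTate}, so $x$ agrees with a principal idele up to a tuple of $l$-th roots of unity and its image in $\CL^{0}(K)$ vanishes. Without this (or an equivalent) ingredient, your parts (2) and (3) have no support.

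There is also an internal inconsistency. In part (3) you assert that the $l$-primary part of $\mathcal G^{ab}_K$ is $\mathbb Z_l$ times a product of cyclic groups $\mathbb Z/l^{a_v}\mathbb Z$, ``the finite class group being absorbed into these infinitely many factors.'' This is false whenever $\CL^{0}_{l}(K) \ne 0$, and it contradicts your own part (2): for such a product the closure of the torsion is the whole product of cyclic factors, so the torsion of the quotient by that closure is trivial, and part (2) would then force $\CL^{0}_{l}(K)=0$. The class group is never absorbed — that is precisely what ``totally non-split'' means, and it is why the paper needs theorem \ref{group} (uniqueness of the middle term $B$ with $A = \cap_{n\ge 1} nB$, applied to the Pontryagin dual sequence) to pin down $\C^{0}_{K,l}$. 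Your conclusion $s_l(\mathcal G^{ab}_K)=\ord_l(q^{d}-1)$ with $d$ the order of $q$ modulo $l$ is nevertheless correct, but its proof must route through the non-split structure: any cyclic direct summand of $\C^{0}_{K,l}$ consists of torsion, hence lies in $\mathcal T_{q,l}$ by the torsion theorem, and its generator cannot be divisible by $l$, which forces its order to be at least $l^{N(l)}$ (apart from the exceptional $\mathbb Z/2\mathbb Z$ summands when $q \equiv 3 \bmod 4$); the existence of summands of order exactly $l^{N(l)}$ then comes from the explicit description of $\C^{0}_{K,l}$ furnished by theorem \ref{group}, not from a naive product decomposition.
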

\begin{proof}
See corollaries \ref{rec1} and $\ref{rec2}$.
\end{proof}

The main idea towards our result was inspired by the work \cite{Peter1}, where authors produced an elegant description for isomorphism class of the topological group $\mathcal G^{ab}_K $, where $K$ denotes \emph{imaginary quadratic number field}. But also, note that there are many completely different technical details, which give in some sense opposite to their result.

The paper has the following structure: in the next section we will sketch the proof of the theorem \ref{main}. Then we prove all the necessarily lemmas. Finally, we will discuss the question about construction of non-isomorphic function fields with isomorphic and non-isomorphic abelian parts of their absolute Galois groups. 

\section{Outline of the Proof }

The global class field theory provides an inherits description of the abelian part of the absolute Galois group of a global or local field $K$ in terms of different arithmetic objects associated to $K$. We will use the \emph{Idele}-theoretical approach, see section \ref{ClassField} for details and the following classical books \cite{Neuk}, \cite{WeilBook}, \cite{ArtinTate} for complete discussion. For a given global function field $K$ with the exact constant field $\mathbb F_q$, $q=p^n$, $p$ is prime let $\I_K$ denotes the group of Ideles of $K$ and $\C_K$ denotes the \emph{Idele class-group of $K$}, i.e. the quotient group of $\I_K$ by the multiplicative group $K^{\times}$. Then the Artin map provides us with the homomorphism $\C_K \to \mathcal G^{ab}_K $. In the function field case this map is injective, but not surjective, since $\C_K$ is not compact. But if we take the pro-finite completion(with respect to the given topology) of $\C_K$ then the main theorem of the class field theory says that we have isomorphism of topological groups: 
$  \widehat{\C_K} \simeq  \mathcal G^{ab}_K$, see for example [theorem 6, chapter 9 of \cite{WeilBook}]. 

Recall that we have a split exact sequence: $$0 \to \C^{0}_K \to \C_K  \to \mathbb Z \to 0,$$ where $\C^{0}_K$ is the degree zero part of the Idele class group and the map from $\C_K$ to $\mathbb Z$ is the degree map. Now, $\C^{0}_K$ is pro-finite, hence complete and therefore $\widehat{ \C_K} \simeq \C^{0}_K \oplus \widehat{\mathbb Z}$.  We will show that $\mathcal G^{ab}_K \simeq \mathcal G^{ab}_{K'}$ if and only if $\C^{0}_K \simeq \C^{0}_{K'}$. The key ingredient in the our proof is the Pontryagin duality for locally compact abelian groups, which allows us to reduce question about pro-finite abelian groups to the question about discrete torsion groups. 

\begin{lem}\label{zlem}
Let $A$ and $B$ be two pro-finite abelian groups. Then $A \simeq B$ if and only if $A \oplus \widehat{\mathbb Z} \simeq B \oplus \widehat{\mathbb Z}$ in the category of pro-finite abelian groups.
\end{lem}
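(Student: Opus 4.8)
The forward implication is immediate, so the real content is the cancellation statement: from $A \oplus \widehat{\mathbb Z} \simeq B \oplus \widehat{\mathbb Z}$ I want to conclude $A \simeq B$. The plan is to transport the problem through Pontryagin duality into the category of discrete torsion abelian groups, where the relevant structure theory is far more transparent. The functor $G \mapsto \widehat{G} = \Hom(G, \mathbb R/\mathbb Z)$ restricts to a contravariant equivalence between profinite abelian groups and discrete torsion abelian groups; it carries a finite direct sum to the finite direct sum of the duals and it sends $\widehat{\mathbb Z}$ to $\mathbb Q/\mathbb Z$. Dualizing the hypothesis therefore yields an isomorphism $\widehat{A} \oplus \mathbb Q/\mathbb Z \simeq \widehat{B} \oplus \mathbb Q/\mathbb Z$ of discrete torsion abelian groups, and it suffices to prove that $\mathbb Q/\mathbb Z$ can be cancelled there, after which one dualizes back.

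Next I would exploit that $\mathbb Q/\mathbb Z$ is divisible, hence injective as a $\mathbb Z$-module. Write each of $M := \widehat{A}$ and $N := \widehat{B}$ as $M = d(M) \oplus R_M$, the direct sum of its maximal divisible subgroup $d(M)$ and a reduced complement $R_M$ (unique up to isomorphism). Since $\mathbb Q/\mathbb Z$ is divisible, the maximal divisible subgroup of $M \oplus \mathbb Q/\mathbb Z$ is $d(M) \oplus \mathbb Q/\mathbb Z$, with reduced complement $R_M$, and likewise for $N$. The maximal divisible subgroup is intrinsic, so any isomorphism $M \oplus \mathbb Q/\mathbb Z \simeq N \oplus \mathbb Q/\mathbb Z$ restricts to an isomorphism of maximal divisible subgroups and induces one on the reduced quotients. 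Hence $R_M \simeq R_N$ and $d(M) \oplus \mathbb Q/\mathbb Z \simeq d(N) \oplus \mathbb Q/\mathbb Z$. The reduced parts already agree, so the only remaining task is cancellation for divisible torsion groups.

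Finally I would invoke the classification of divisible torsion abelian groups: such a group is $\bigoplus_p \bigoplus_{\kappa_p} \mathbb Q_p/\mathbb Z_p$, and its isomorphism type is determined by the cardinals $\kappa_p = \dim_{\mathbb F_p}\bigl(d(M)[p]\bigr)$. Since $\mathbb Q/\mathbb Z = \bigoplus_p \mathbb Q_p/\mathbb Z_p$ contributes exactly one copy at each prime, the isomorphism $d(M) \oplus \mathbb Q/\mathbb Z \simeq d(N) \oplus \mathbb Q/\mathbb Z$ forces $\kappa_p(d(M)) + 1 = \kappa_p(d(N)) + 1$ for every $p$; cancelling a finite cardinal in cardinal arithmetic gives $\kappa_p(d(M)) = \kappa_p(d(N))$, so $d(M) \simeq d(N)$. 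Combined with $R_M \simeq R_N$ this yields $M \simeq N$, that is $\widehat{A} \simeq \widehat{B}$, and dualizing back $A \simeq B$.

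The step I expect to be the main obstacle is precisely this cancellation, because cancellation of a direct summand fails for general modules: the endomorphism ring of $\mathbb Q/\mathbb Z$ is $\widehat{\mathbb Z}$, which is not local, so Krull--Schmidt--Azumaya does not apply directly. What rescues the argument is the rigidity of the divisible/reduced splitting -- the reduced part is cancelled for free by its uniqueness, and the divisible part is cancelled numerically through its cardinal invariants, where removing a single copy of each $\mathbb Q_p/\mathbb Z_p$ is legitimate since finite cardinals cancel even against infinite ones.
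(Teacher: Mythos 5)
Your proposal is correct and follows essentially the same route as the paper: dualize to the category of discrete torsion abelian groups, split off the maximal divisible subgroup against its reduced complement, and cancel $\mathbb Q/\mathbb Z$ on the divisible side using the classification of divisible groups. In fact your treatment is slightly more careful than the paper's at the one delicate point, since you make explicit the cardinal invariants $\kappa_p = \dim_{\mathbb F_p}\bigl(d(M)[p]\bigr)$ and the fact that a finite cardinal may be cancelled even against an infinite one, which is precisely what justifies the paper's brief assertion that the divisible parts of $A^{\vee}$ and $B^{\vee}$ are isomorphic.
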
 
\begin{proof}See section \ref{proof1}.
\end{proof}
This lemma reduces our question to the description of $\C^{0}_K$ as topological group. Let $v$ denotes a place of $K$ and $K_v$, $\mathcal O_v$  denotes the corresponding completion and its ring of integers respectively. Then we derive the following exact sequence. 

\begin{lem}
There exists an exact sequence of topological groups, where all finite groups treated with the discrete topology: $$ 1\to \mathbb F^{\times}_q  \to \prod_{v} \mathcal O^{\times}_v \to \C^{0}_K \to \CL^{0}(K) \to 1.$$  
\end{lem}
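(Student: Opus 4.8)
The plan is to derive the exact sequence
$$ 1\to \mathbb F^{\times}_q \to \prod_{v} \mathcal O^{\times}_v \to \C^{0}_K \to \CL^{0}(K) \to 1 $$
directly from the definition of the idele class group. Recall that $\C_K = \I_K / K^{\times}$, where an idele is an element of $\prod_v K_v^{\times}$ that lies in $\mathcal O_v^{\times}$ for all but finitely many $v$. There is a degree map $\I_K \to \mathbb Z$ sending an idele to $\sum_v v(\cdot) \deg(v)$ (where $\deg(v) = [\mathcal O_v/\mathfrak m_v : \mathbb F_q]$), which vanishes on $K^{\times}$ by the product formula and descends to the degree map $\C_K \to \mathbb Z$. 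I would write $\I_K^0$ for the kernel of the degree map on ideles, so that $\C_K^0 = \I_K^0 / K^{\times}$.

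First I would produce the right half of the sequence, namely the surjection $\C_K^0 \to \CL^0(K)$. The natural map $\I_K \to \bigoplus_v \mathbb Z$ sending an idele $(x_v)$ to the divisor $\sum_v v(x_v)\, [v]$ is a continuous surjection onto the divisor group $\mathrm{Div}(K)$; its kernel is exactly $\prod_v \mathcal O_v^{\times}$, since the valuation components being zero means each $x_v$ is a unit. Restricting to degree-zero ideles gives a surjection $\I_K^0 \to \mathrm{Div}^0(K)$ with the same kernel $\prod_v \mathcal O_v^{\times}$. Passing to the quotient by $K^{\times}$, whose image in $\mathrm{Div}^0(K)$ is the group of principal divisors, yields the surjection $\C_K^0 \to \CL^0(K) = \mathrm{Div}^0(K)/\mathrm{Princ}(K)$; its kernel is the image of $\prod_v \mathcal O_v^{\times}$ in $\C_K^0$. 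This identifies the cokernel as the class group, giving the right-hand exactness.

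Next I would analyze the map $\prod_v \mathcal O_v^{\times} \to \C_K^0$ and identify its kernel as $\mathbb F_q^{\times}$. An element of $\prod_v \mathcal O_v^{\times}$ maps to $1$ in $\C_K^0$ precisely when, as an idele, it lies in $K^{\times}$; that is, the kernel is $K^{\times} \cap \prod_v \mathcal O_v^{\times}$. An element $f \in K^{\times}$ that is a unit at every place has trivial divisor, hence is a global function on the complete curve $X$ with no zeros or poles, so it lies in the constant field; being nonzero it lies in $\mathbb F_q^{\times}$. Conversely every element of $\mathbb F_q^{\times} \subset K^{\times}$ is a unit at each place and lies diagonally in $\prod_v \mathcal O_v^{\times}$. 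This gives the left-hand exactness $1 \to \mathbb F_q^{\times} \to \prod_v \mathcal O_v^{\times}$ and exactness in the middle, since the image of $\prod_v \mathcal O_v^{\times}$ in $\C_K^0$ is exactly the kernel of the map to $\CL^0(K)$ by the previous paragraph.

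The remaining point, and the one I expect to require the most care, is the topological claim rather than the algebraic exactness: each $\mathcal O_v^{\times}$ is a profinite (hence compact) group, the product $\prod_v \mathcal O_v^{\times}$ carries the product topology and is compact by Tychonoff, and I must check that the maps in the sequence are continuous and open onto their images so that the sequence is exact in the category of topological groups with $\mathbb F_q^{\times}$ and $\CL^0(K)$ discrete. The subtlety is that $\prod_v \mathcal O_v^{\times}$ sits inside $\I_K$ with its restricted-product topology, which on this subgroup agrees with the product topology; and since $\CL^0(K)$ is finite and $\mathbb F_q^{\times}$ is finite, giving them the discrete topology is automatic. The continuity of the surjection $\C_K^0 \to \CL^0(K)$ follows from that of the divisor map, and openness follows because a continuous surjection from a compact group onto a discrete group is automatically open. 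I would verify these topological compatibilities carefully, as they are what upgrades a sequence of abstract abelian groups to the stated exact sequence of topological groups.
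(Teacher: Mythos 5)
Your proof is correct and follows essentially the same route as the paper: both rest on the divisor map on (degree-zero) ideles, surjective with kernel $\prod_v \mathcal O_v^{\times}$, the identification $K^{\times} \cap \prod_v \mathcal O_v^{\times} = \mathbb F_q^{\times}$ (a function with trivial divisor on the complete curve is constant), and the presentation of $\CL^{0}(K)$ as degree-zero divisors modulo principal ones. The only difference is organizational: the paper packages the chase as a snake-lemma diagram built from $1 \to K^{\times} \to \I^{0}_K \to \C^{0}_K \to 1$ and $1 \to \mathbb F_q^{\times} \to K^{\times} \to \PP^{0}(K) \to \CL^{0}(K) \to 1$, while you carry out the same identifications directly (and are somewhat more explicit about the topological side, which the paper leaves implicit).
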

\begin{proof}
See section \ref{proof2}.
\end{proof}

After that we recall the isomorphism $\mathcal O^{\times}_v \simeq \mathbb F^{\times}_{q^{n}} \times \mathbb Z_p^{\infty}$, where $n$ is the degree of a place $v$ and $\mathbb Z_p$ denotes the group of $p$-adic integers. Denoting by $\mathcal T_K$ the group $(\prod_{v} \mathbb F^{\times}_{q^{\deg(v)}} ) / \mathbb F^{\times}_q$ we will get the following exact sequence:

\begin{equation} \label{es}
1\to \mathcal T_K \times \mathbb Z_p^{\infty} \to \C^{0}_K \to \CL^{0}(K) \to 1  
\end{equation}

There are two crucial observation about this sequence. First we will prove the following structure theorem for the group $\mathcal T_K$:
\begin{theorem}\label{tortheorem}
Given a function field $K$ with the exact constant field $\mathbb F_q$, where $q=p^n$ there exists an isomorphism $\mathcal T_K \simeq \prod_{l,m} (\mathbb Z / l^{m} \mathbb Z)^{a_{l,m}}$, where the product is taken over all prime numbers $l$ and all positive integers $m$ and $a_{l,m}$ denotes a finite or countable cardinal number. Moreover, coefficients $a_{l,m}$ depend only on $q$ and the following holds:
\begin{enumerate}
	\item	Each $a_{l,m}$ is either zero or infinite countable cardinal;
	\item For $l=p$ we have $a_{p,m} = 0$ for all $m$;
	\item For $l \ne p$, $l \ne 2$ there exists a unique non-negative integer $N_q(l)$ such that $a_{l, m}$ is infinite if and only if $m \ge N_q(l)$;
	\item For $p \ne 2$ and $l=2$ there exists a unique non-negative integer $N_q(2)$ such that for $q = 1 \mod 4$ we have $a_{2, m}$ is infinite if and only if $m \ge N_q(2)$, and for $q = 3 \mod 4$ we have $a_{2, m}$ is infinite if and only if $m =1$ or $m \ge N_q(2)$; 
	\item Given two prime powers $q_1$, $q_2$ numbers $N_{q_1} (l)$ and $N_{q_2} (l)$ coincide for all $l$ if and only if $q_1=p^{n_1}$, $q_2=p^{n_2}$ with $\frac{n_1}{n_2} = p^{m}$, for some integer $m$. 
\end{enumerate}
\end{theorem}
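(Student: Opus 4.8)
The plan is to analyze $\mathcal{T}_K$ one prime at a time, using that each factor $\mathbb{F}^{\times}_{q^{\deg v}}$ is cyclic of order $q^{\deg v}-1$, so its $l$-primary part is cyclic of order $l^{\ord_l(q^{\deg v}-1)}$. For fixed $l$ the $l$-part of $\prod_v \mathbb{F}^{\times}_{q^{\deg v}}$ is thus $\prod_v \mathbb{Z}/l^{c_l(\deg v)}\mathbb{Z}$ with $c_l(d):=\ord_l(q^d-1)$, and as a topological group any product of finite cyclic $l$-groups is, after merely regrouping factors of equal order, isomorphic to $\prod_m (\mathbb{Z}/l^m\mathbb{Z})^{a_{l,m}}$ where $a_{l,m}$ counts the places $v$ with $c_l(\deg v)=m$. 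First I would dispose of point (2): since $q=p^n$ gives $q^d\equiv -1 \pmod p$ we have $c_p(d)=0$ for all $d$, so the $p$-part is trivial. Next I would pass from the full product to the quotient $\mathcal{T}_K$: because $\mathbb{F}^{\times}_q$ is finite, its $l$-part is a single cyclic group $\mathbb{Z}/l^{\ord_l(q-1)}\mathbb{Z}$ embedded diagonally, and by projecting onto one factor of matching order one splits it off as a direct summand, so the quotient removes at most one cyclic factor and cannot change any \emph{infinite} multiplicity. Hence it suffices to compute the $a_{l,m}$ for the product itself.

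For the zero/infinite dichotomy (point (1)) and the claim that the $a_{l,m}$ depend only on $q$, the geometric input is that the number $B_d$ of places of degree $d$ is finite but satisfies $B_d\to\infty$ (a standard consequence of the Hasse--Weil estimate $\#X(\mathbb{F}_{q^d})=q^d+1-\sum\alpha_i^d$). Therefore $a_{l,m}=\sum_{d:\,c_l(d)=m}B_d$ vanishes exactly when no $d$ satisfies $c_l(d)=m$, and is countably infinite as soon as some $d$ does, since the set $\{d:c_l(d)=m\}$ will turn out to be infinite whenever nonempty while $B_d\geq 1$ for all large $d$. As $c_l(d)$ depends only on $q$, so does the entire pattern of vanishing, which is precisely the asserted $q$-dependence. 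It remains only to determine, for each $l\neq p$, the set of attained positive values of $c_l$.

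The heart of the matter is the lifting-the-exponent computation of $c_l(d)=\ord_l(q^d-1)$. For odd $l\neq p$, let $e$ be the multiplicative order of $q$ modulo $l$; then $c_l(d)=0$ unless $e\mid d$, in which case $c_l(d)=\ord_l(q^e-1)+\ord_l(d/e)$. Setting $N_q(l):=\ord_l(q^e-1)$, the positive values attained are exactly the integers $\geq N_q(l)$, each for infinitely many $d$, giving point (3). For $l=2$ with $p$ odd the same lemma carries the parity correction: when $q\equiv 1 \pmod 4$ one has $c_2(d)=\ord_2(q-1)+\ord_2(d)$, monotone with threshold $N_q(2)=\ord_2(q-1)$; when $q\equiv 3 \pmod 4$ odd $d$ force $c_2(d)=1$ while even $d$ give $c_2(d)=\ord_2(q+1)+\ord_2(d)\geq \ord_2(q+1)+1$, producing the isolated value $m=1$ together with a tail starting at $N_q(2)=\ord_2(q+1)+1$. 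This yields point (4), and I expect the bookkeeping around $l=2$ and the residue of $q$ modulo $4$ to be the main obstacle, as it is the one place where the monotone picture breaks.

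For the comparison (point (5)) I would make the $q$-dependence explicit. Writing $q=p^n$ and letting $f$ be the order of $p$ modulo $l$, one checks $ne=\mathrm{lcm}(f,n)$, so lifting the exponent gives $N_q(l)=\ord_l(p^f-1)+\ord_l(n)$ for odd $l\neq p$; the first summand depends only on $p$ and $l$, whence $N_{q_1}(l)=N_{q_2}(l)$ for every odd $l\neq p$ is equivalent to $\ord_l(n_1)=\ord_l(n_2)$ for all such $l$. A parallel $2$-adic computation shows the full $2$-primary structure agrees exactly when $\ord_2(n_1)=\ord_2(n_2)$, where one must again track the $q\equiv 3\pmod 4$ behaviour carefully so that the isolated $m=1$ factor is matched correctly. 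Combining these, the invariants coincide for all $l\neq p$ precisely when $\ord_l(n_1)=\ord_l(n_2)$ for every $l\neq p$, i.e.\ when $n_1$ and $n_2$ have the same prime-to-$p$ part, which is exactly the stated condition $n_1/n_2=p^m$.
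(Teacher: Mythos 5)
Your proposal is correct and follows the paper's strategy in its essentials: identify $a_{l,m}$ with the number of places $v$ having $\ord_l(q^{\deg v}-1)=m$, and combine a lifting-the-exponent analysis of $d\mapsto\ord_l(q^d-1)$ with the Weil-bound fact that places of all sufficiently large degrees exist, so that each multiplicity is zero or countably infinite with a vanishing pattern depending only on $q$. Two steps are executed differently. First, to kill the diagonal $\mathbb F_q^{\times}$, the paper proves a separate elementary lemma giving a place $\beta$ whose degree $m$ satisfies $\gcd\bigl(\tfrac{q^m-1}{q-1},q-1\bigr)=1$, which makes all of $\mathbb F_q^{\times}$ a direct summand of the single factor $\mathbb F^{\times}_{q^m}$; you instead split the diagonal prime by prime, projecting its $l$-part onto one factor of order exactly $l^{\ord_l(q-1)}$. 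This avoids the gcd lemma, but it tacitly assumes such a factor exists, i.e.\ that some place $v_0$ has $\ord_l(q^{\deg v_0}-1)=\ord_l(q-1)$; that is true (for $l\mid q-1$ this value is always attained, by your own computation of the value set of $c_l$), but it deserves an explicit sentence. Second, in point (5) you compare the \emph{full} $2$-primary pattern rather than only the thresholds $N_q(2)$; this is in fact necessary, because for $p\equiv 3\bmod 4$ the thresholds alone can agree while the groups differ: taking $n_1=1$, $n_2=2$ one gets $N_{q_1}(2)=\ord_2(p^2-1)=N_{q_2}(2)$ and equality of $N_{q_i}(l)$ for all odd $l$, yet $\mathcal T_{q_1}\not\simeq\mathcal T_{q_2}$ since only the first group has $\mathbb Z/2\mathbb Z$ summands. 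So your reading repairs the literal wording of item (5) and matches what the paper's own proof of the corresponding comparison theorem (which works with all coefficients $a_{l,m}$, not just the $N$'s) actually establishes. (Minor slip: in point (2) you mean $q^d-1\equiv-1\bmod p$, not $q^d\equiv-1\bmod p$.)
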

\begin{proof}
See section \ref{proof3}.
\end{proof}

From now we denote the group $\mathcal T_K$ by $\mathcal T_q$.

\begin{defin}
The exact sequence of abelian groups $0 \to A \to B \to^{\psi} C \to 0$ is called \emph{totally non-split} if there is no non-trivial subgroup $S$ of $C$ such that the sequence $0 \to A \to \psi^{-1}(S) \to S \to 0$ splits. 	
\end{defin}  

The second observation is the key point in the our proof.

\begin{theorem}
All torsion elements of $\C^{0}_K$ are in $\mathcal T_q$. Therefore the exact sequence \ref{es} is totally non-split. Moreover, the topological closure of the torsion subgroup of $\C^{0}_K$ is   $\mathcal T_q$ : $\overline{\C^{0}_K [\tor]} = \mathcal T_q$. 
\end{theorem}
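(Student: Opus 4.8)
The plan is to prove the three assertions in the order stated, the heart of the matter being the first: that every torsion element of $\C^0_K$ already lies in $\mathcal T_q$. I would treat the $p$-primary and the prime-to-$p$ torsion separately, since each uses a different local-global mechanism. A torsion class $x=[\alpha]\in\C^0_K$ of order dividing $N$ is represented by a degree-zero idele $\alpha=(\alpha_v)$ with $\alpha^N=f$ for some $f\in K^\times$, and the surjection $\psi\colon\C^0_K\to\CL^0(K)$ of sequence \ref{es} sends $x$ to the class of the degree-zero divisor $D=\mathrm{div}(\alpha)=\tfrac1N\,\mathrm{div}(f)$. The entire problem is thus to show $[D]=0$, i.e.\ that $D$ is principal. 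The one structural fact available at every step is that $\alpha_v^N=f$ in each completion, so $f$ is an $N$-th power in every $K_v$.

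For the prime-to-$p$ part, say $x$ of order $N'$ with $\gcd(N',p)=1$, I would pass to the constant field extension $K'=K\mathbb F_{q'}$ chosen so that $N'\mid q'-1$, whence $\mu_{N'}\subset K'$. Over $K'$ the function $f$ is still a local $N'$-th power everywhere (each $K_v$ embeds in the corresponding $K'_w$), so Kummer theory applies to $L=K'(\sqrt[N']{f})$ and shows that every place of $K'$ splits completely in $L$; by the Chebotarev density theorem a nontrivial extension cannot split completely everywhere, so $L=K'$ and $f=h^{N'}$ for some $h\in K'^\times$. Pulling $D$ back to $X_{\mathbb F_{q'}}$ and using that the divisor group is torsion-free yields $D_{K'}=\mathrm{div}_{K'}(h)$, so the class $[D]$ dies in $\CL^0(K')$. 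Since the base-change map $\CL^0(K)\to\CL^0(K')$ is the inclusion $\Jac_X(\mathbb F_q)\hookrightarrow\Jac_X(\mathbb F_{q'})$ of rational points of the Jacobian, which is injective, we conclude $[D]=0$ and hence $x\in\ker\psi=\mathcal T_q\times\mathbb Z_p^\infty$. As $x$ has prime-to-$p$ order and $\mathbb Z_p^\infty$ is torsion-free, $x$ lands in $\mathcal T_q$.

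For the $p$-primary part I would instead show that $\C^0_K$ has no nontrivial $p$-torsion whatsoever. If $\alpha^p=f\in K^\times$, then $f$ is a local $p$-th power everywhere, and the key input is the local-global principle for $p$-th powers: one has $f\in K^p$ exactly when $df=0$ in the one-dimensional module $\Omega^1_{K/\mathbb F_q}$, and since the restriction $\Omega^1_{K/\mathbb F_q}\otimes_K K_v\to\Omega^1_{K_v}$ is injective, $f\notin K^p$ would force $f\notin K_v^p$ for almost all $v$; hence $f\in K^p$, say $f=f_1^p$. Because the $p$-th power (Frobenius) map is injective on each field $K_v$, the identity $\alpha_v^p=f_1^p$ gives $\alpha_v=f_1$, so $\alpha=f_1$ is a principal idele and $x=0$. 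Combining the two cases via the primary decomposition of a torsion element shows $\C^0_K[\tor]\subseteq\mathcal T_q$. I expect the prime-to-$p$ descent — the interplay of the constant-field extension, Chebotarev, and the injectivity of $\Jac_X(\mathbb F_q)\to\Jac_X(\mathbb F_{q'})$ — to be the main obstacle; the $p$-part is clean once differentials are brought in.

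The remaining two assertions are then formal. For total non-splitting, a splitting $s$ over some nontrivial $S\subseteq\CL^0(K)$ would embed the finite group $S$ into $\C^0_K$; any nonzero $s(\sigma)$ would be a torsion element with $\psi(s(\sigma))=\sigma\neq0$, contradicting $\C^0_K[\tor]\subseteq\mathcal T_q=\ker\psi$. Finally, since $\mathcal T_q\subseteq\C^0_K$, its torsion lies in $\C^0_K[\tor]$, so with the inclusion just proved we get $\C^0_K[\tor]=\mathcal T_q[\tor]$; by Theorem \ref{tortheorem} the group $\mathcal T_q\cong\prod_{l,m}(\mathbb Z/l^m\mathbb Z)^{a_{l,m}}$ carries the product topology, in which the finite-support elements — all of them torsion — are dense, whence $\overline{\C^0_K[\tor]}=\overline{\mathcal T_q[\tor]}=\mathcal T_q$.
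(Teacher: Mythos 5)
Your proposal is correct, and its skeleton agrees with the paper's: represent a torsion class by an idele $\alpha$ with $\alpha^N=f\in K^\times$, note that $f$ is an $N$-th power in every completion, conclude that the class of $\mathrm{div}(\alpha)$ in $\CL^0(K)$ vanishes, so all torsion lies in $\ker\psi=\mathcal T_q\times\mathbb Z_p^{\infty}$ and hence in $\mathcal T_q$; the non-splitting and closure claims then follow by the same formal arguments the paper uses. Where you genuinely diverge is in how the local--global input is obtained. The paper disposes of it in one line by citing Artin--Tate (Theorem 1, Chapter 9) for an element of prime order $l$, including $l=p$: a function that is locally an $l$-th power everywhere is globally an $l$-th power, so the representing idele is principal up to an idele of roots of unity lying in $\mathcal T_q$. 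You instead prove what is needed: for orders $N'$ prime to $p$ you adjoin $\mu_{N'}$ via a constant field extension, apply Kummer theory plus Chebotarev (complete splitting everywhere forces a trivial extension), and descend using injectivity of the conorm $\CL^0(K)\to\CL^0(K')$; for $p$-torsion you use the differential criterion for $p$-th powers and injectivity of Frobenius. Your route is longer but buys real advantages: it is self-contained; it treats an arbitrary order $N'$ prime to $p$ in one stroke, whereas the paper argues only for prime order --- and ``prime-order torsion lies in the kernel'' does not formally imply ``all torsion lies in the kernel'' (compare $\mathbb Z/4\mathbb Z\to\mathbb Z/2\mathbb Z$), so your primary-decomposition step fills in a reduction the paper leaves implicit; descending at the level of divisor classes rather than proving $f\in K^{\times N'}$ sidesteps any Grunwald--Wang special-case discussion; and your $p$-part gives the stronger statement that $\C^0_K$ has no $p$-torsion at all, matching the structure of $\mathcal T_q$. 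The price is two standard facts asserted without proof --- injectivity of $\Omega^1_{K/\mathbb F_q}\otimes_K K_v\to\Omega^1_{K_v}$, and injectivity of the conorm under constant field extensions (Lang's theorem, or Hilbert 90 applied to $h^\sigma/h$) --- both of which are true. One small slip: in the non-splitting argument you write $\mathcal T_q=\ker\psi$, whereas $\ker\psi=\mathcal T_q\times\mathbb Z_p^{\infty}$; the contradiction only needs $\C^0_K[\tor]\subseteq\ker\psi$, so nothing breaks.
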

\begin{proof}
See section \ref{torsion}.
\end{proof}

Because of the description of $\mathcal T_q$ this theorem gives us:

\begin{cor}\label{rec1}
If $\mathcal G^{ab}_K \simeq \mathcal G^{ab}_{K'}$ as pro-finite groups then $K$ and $K'$ share the same group $\mathcal T_q$, in particular the characteristic $p$ and the invariant $d_K$ are determined by the isomorphism class of $\mathcal G^{ab}_K$. 
\end{cor}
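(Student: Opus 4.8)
The plan is to descend from $\mathcal G^{ab}_K$ to its compact part $\C^{0}_K$, extract the canonical torsion subgroup $\mathcal T_q$, and then read off $p$ and $d_K$ from the structure theorem. First I would record the decomposition from the outline: the degree sequence $0 \to \C^{0}_K \to \C_K \to \mathbb Z \to 0$ splits and $\C^{0}_K$ is pro-finite, so $\mathcal G^{ab}_K \simeq \widehat{\C_K} \simeq \C^{0}_K \oplus \widehat{\mathbb Z}$, and likewise for $K'$. Hence a topological isomorphism $\mathcal G^{ab}_K \simeq \mathcal G^{ab}_{K'}$ gives $\C^{0}_K \oplus \widehat{\mathbb Z} \simeq \C^{0}_{K'} \oplus \widehat{\mathbb Z}$, and the cancellation Lemma \ref{zlem} applied with $A = \C^{0}_K$ and $B = \C^{0}_{K'}$ yields $\C^{0}_K \simeq \C^{0}_{K'}$ as topological groups.

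Second, I would argue that $\mathcal T_q$ is recovered from $\C^{0}_K$ purely topologically. The torsion subgroup is preserved by any abstract group isomorphism and the topological closure by any homeomorphism, so any topological isomorphism $\C^{0}_K \to \C^{0}_{K'}$ restricts to an isomorphism $\overline{\C^{0}_K[\tor]} \simeq \overline{\C^{0}_{K'}[\tor]}$. By the torsion theorem these closures are exactly $\mathcal T_q$ and $\mathcal T_{q'}$. Therefore $\mathcal T_q \simeq \mathcal T_{q'}$, which is the first assertion of the corollary.

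Third, I would recover $p$ and $d_K$ from the isomorphism type of $\mathcal T_q$ via Theorem \ref{tortheorem}. Because $\widehat{\mathbb Z}$ is torsion-free, the torsion of $\mathcal G^{ab}_K$ equals that of $\C^{0}_K$, which lies in $\mathcal T_q$; by part (2) the group $\mathcal T_q$ has no $p$-torsion, while parts (3) and (4) guarantee nonzero $l$-torsion for every $l \ne p$. Thus $p$ is the unique prime such that $\mathcal G^{ab}_K$ has no element of that order, so $p$ is determined. For $d_K$, note that the isomorphism type of $\mathcal T_q \simeq \prod_{l,m}(\mathbb Z/l^{m}\mathbb Z)^{a_{l,m}}$ records, for each pair $(l,m)$, whether $a_{l,m}$ vanishes or is infinite, and this datum pins down every threshold $N_q(l)$ (the $l=2$, $q \equiv 3 \bmod 4$ pattern being equally legible). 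An isomorphism $\mathcal T_q \simeq \mathcal T_{q'}$ therefore forces $N_q(l) = N_{q'}(l)$ for all $l$, and by part (5) this holds precisely when $n/n' = p^{m}$; writing $n = p^{k} d_K$ and $n' = p^{k'} d_{K'}$ with $d_K, d_{K'}$ coprime to $p$ collapses this to $d_K = d_{K'}$, so $d_K$ is determined.

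The deductive content of the corollary is light once the structure theorem and the torsion theorem are in hand; the only point needing genuine care is this last translation, where one must check that the coarse invariant ``which $a_{l,m}$ are infinite'' already determines the full family $\{N_q(l)\}$ and hence, through part (5), the prime-to-$p$ part $d_K$ rather than merely $q$ itself. I expect this bookkeeping, together with keeping track of the $l=2$ dichotomy, to be the main (if modest) obstacle; the substantive difficulties are quarantined inside the two theorems we are invoking.
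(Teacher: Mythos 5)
Your first two steps are correct and essentially follow the paper. In fact the paper reaches $\mathcal T_q$ even more directly: since $\widehat{\mathbb Z}$ is torsion free, the closure of the torsion subgroup of $\mathcal G^{ab}_K \simeq \C^{0}_K \oplus \widehat{\mathbb Z}$ is already $\overline{\C^{0}_K[\tor]} = \mathcal T_q$, so no cancellation is needed; your detour through Lemma \ref{zlem} is harmless. The identification of $p$ as the unique prime with no torsion of that order (no $p$-torsion by part (2), nonzero $l$-torsion for all $l \ne p$ by parts (3) and (4)) is exactly the paper's argument.

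The recovery of $d_K$, however, has a genuine gap. You compress the isomorphism $\mathcal T_q \simeq \mathcal T_{q'}$ into the single datum ``$N_q(l) = N_{q'}(l)$ for all $l$'' and then invoke part (5) of Theorem \ref{tortheorem}. That step fails, because the family $\{N_q(l)\}_l$ alone does not determine $d_q$, and part (5) is false as literally stated. Concretely take $p = 3$, $q_1 = 3$, $q_2 = 9$. For every odd prime $l \ne 3$ we have $N_{q_2}(l) = \ord_l(9^{l-1}-1) = \ord_l(3^{l-1}-1) + \ord_l(3^{l-1}+1) = \ord_l(3^{l-1}-1) = N_{q_1}(l)$, since $3^{l-1} \equiv 1 \bmod l$ forces $l \nmid 3^{l-1}+1$; and for $l = 2$, since $3 \equiv 3 \bmod 4$ while $9 \equiv 1 \bmod 4$, the paper's definitions give $N_{q_1}(2) = \ord_2(3^2-1) = 3 = \ord_2(9-1) = N_{q_2}(2)$. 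So all thresholds coincide, yet $n_1/n_2 = 1/2$ is not a power of $3$ and $d_{q_1} = 1 \ne 2 = d_{q_2}$. (This does not contradict the main theorem: $\mathcal T_3 \not\simeq \mathcal T_9$, because $a_{2,1}$ is infinite for $q = 3$ and zero for $q = 9$.) The information your reduction discards --- whether $\mathcal T_q$ has a direct summand $\mathbb Z/2\mathbb Z$, i.e.\ whether $a_{2,1} \ne 0$ --- is precisely what you gesture at in the parenthesis ``the $l=2$, $q \equiv 3 \bmod 4$ pattern being equally legible,'' but it never enters your deduction. The paper's own proof keeps this information: it recovers $\ord_l(d_K)$ from $s_l(\mathcal G^{ab}_K)$, the least $k$ such that there is a direct summand $\mathbb Z/l^k\mathbb Z$, via the formula $N_q(l) = \ord_l(d_K) + \ord_l\bigl((p^{\star})^{l-1}-1\bigr)$ together with the exceptional clause $\ord_2(d_K) = 0$ when $s_2 = 1$, and this clause is exactly what separates the pair above ($s_2(\mathcal T_3) = 1$ versus $s_2(\mathcal T_9) = 3$). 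To repair your argument, either use that formula, or replace part (5) by the theorem proved in the section describing $\mathcal T_q$, namely that $\mathcal T_{q_1} \simeq \mathcal T_{q_2}$ if and only if $d_{q_1} = d_{q_2}$ --- a statement whose hypothesis is the full isomorphism type and not merely the list of thresholds.
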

\begin{proof}
Since $\mathcal G^{ab}_K \simeq \C^{0}_K \oplus \widehat{ \mathbb Z} $ and the group $\widehat{ \mathbb Z}$ is torsion free, we have that $\mathcal T_q$ is also the closure of the torsion subgroup of $\mathcal G^{ab}_K$. Then theorem \ref{tortheorem} shows that $p$ is a unique prime such that this group has no elements of order $p$.

For the natural number $d_K$ consider the torsion group $\mathcal G^{ab}_K [\tor]$. By the theorem $\ref{tortheorem}$ this group has direct summand of the form $\mathbb Z / l^k \mathbb Z$ for a fixed prime $l \ne p$ if and only if $k \ge N_q(l)$ or $l=2$, $k=1$, $p=3 \mod 4$ and $d_K = 1 \mod 2$. In the proof of the theorem $\ref{tortheorem}$ we will show that $N_q(l) = \ord_l(d_K) + \ord_l ( (p^{\star})^{l-1} -1 )$, where $p^{\star} = -p$ if $p = 3\mod 4$ and $p^{\star}=p$ otherwise. Which implies the formula: $$\ord_l(d_K) = \begin{cases} 0, & \mbox{if } l=2 \mbox{ and } s_2=1 \\ s_l(\mathcal G^{ab}_K) - \ord_l( (p^{\star})^{l-1} -1 ), & \mbox{ otherwise.} \end{cases}$$

\end{proof}

Since each pro-finite abelian group is isomorphic to the limit of finite abelian groups, by the Chinese remainder theorem it is also isomorphic to the product over prime numbers of its primary components. We will work with these components separately instead of working with the whole group. Let $l$ be a prime number different from $p$, we have: $1 \to \mathcal T_{q,l} \to \C^{0}_{K,l} \to \CL^{0}_{l}(K) \to 1$. Which shows that: $$\CL^{0}_{l}(K) \simeq \C^{0}_{K, l} / \overbar{ \C^{0}_{K, l}[\tor]}.$$

\begin{cor}\label{rec2}
If $\mathcal G^{ab}_K \simeq \mathcal G^{ab}_{K'}$ as pro-finite groups then the non $p$-parts of the class-groups of $K$ and $K'$ are isomorphic: $\CL^{0}_{non-p} (K) \simeq \CL^{0}_{non-p} (K').$
\end{cor}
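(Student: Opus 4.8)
The plan is to reduce the statement about $\mathcal G^{ab}_K$ to the reconstruction of each $l$-primary part of $\CL^0(K)$ from $\C^0_K$ that was just established. First I would combine the split decomposition $\mathcal G^{ab}_K \simeq \C^0_K \oplus \widehat{\mathbb Z}$ recorded in the outline with Lemma \ref{zlem}: since $\widehat{\mathbb Z}$ is a common direct summand, the hypothesis $\mathcal G^{ab}_K \simeq \mathcal G^{ab}_{K'}$ is equivalent to $\C^0_K \simeq \C^0_{K'}$ as topological groups. This is the only point at which the passage from the full abelianized Galois group to its degree-zero idele class group is needed.

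Next I would pass to primary components. A topological isomorphism $\C^0_K \simeq \C^0_{K'}$ carries each $l$-primary component isomorphically onto the corresponding one, so $\C^0_{K,l} \simeq \C^0_{K',l}$ for every prime $l$. Fix $l \ne p$. Because such an isomorphism is in particular a homeomorphism, it sends the torsion subgroup to the torsion subgroup and the closure of the torsion subgroup to its counterpart; hence it descends to an isomorphism of quotients $\C^0_{K,l}/\overline{\C^0_{K,l}[\tor]} \simeq \C^0_{K',l}/\overline{\C^0_{K',l}[\tor]}$. Invoking the identification $\CL^0_l(K) \simeq \C^0_{K,l}/\overline{\C^0_{K,l}[\tor]}$ established just above (which rests on the totally non-split sequence \ref{es} together with $\overline{\C^0_K[\tor]} = \mathcal T_q$, so that $\overline{\C^0_{K,l}[\tor]} = \mathcal T_{q,l}$), this yields $\CL^0_l(K) \simeq \CL^0_l(K')$. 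Taking the product over all primes $l \ne p$, and using that a pro-finite abelian group is the product of its primary components, gives $\CL^0_{non-p}(K) \simeq \CL^0_{non-p}(K')$.

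The substantive input is entirely contained in the earlier results, above all the identification of $\overline{\C^0_K[\tor]}$ with $\mathcal T_q$, which is what makes the quotient by the closure of the torsion a genuine invariant of the topological group rather than of the arithmetic of $K$. Consequently the only point demanding care here is functoriality with respect to \emph{arbitrary} topological isomorphisms: I would check that primary decomposition, the torsion subgroup, topological closure, and the induced quotient are all preserved by an abstract isomorphism and not merely by a distinguished one. Once this bookkeeping is spelled out no further computation is required, and I expect no real obstacle beyond it.
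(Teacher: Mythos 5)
Your proposal is correct and follows essentially the same route as the paper: both arguments reconstruct $\CL^{0}_{l}(K)$ for $l \ne p$ as the torsion of the quotient of the $l$-primary part by the closure of its torsion subgroup, resting on the identification $\overline{\C^{0}_{K,l}[\tor]} = \mathcal T_{q,l}$ and the exact sequence, and then assemble the primes $l \ne p$. The only cosmetic difference is that you cancel the $\widehat{\mathbb Z}$ summand at the outset via Lemma \ref{zlem}, whereas the paper keeps the torsion-free factor $\mathbb Z_l$ in the decomposition and lets it vanish when the torsion of the quotient is taken.
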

\begin{proof}
We know that $\mathcal G^{ab}_K \simeq \C_{K}^{0} \oplus \widehat{\mathbb Z}$ and that $\mathcal T_q = \overbar{\mathcal G^{ab}_K[\tor] }$. Considering the $l$-part we get: 

$$\mathcal G^{ab}_{K,l}/ \overbar{\mathcal G^{ab}_{K,l}[\tor] } \simeq (\C_{K,l}^{0}/\mathcal T_{q,l} ) \oplus  {\mathbb Z_l}.$$ 
Since $\mathbb Z_l$ is torsion free, we have:
$$(\mathcal G^{ab}_{K,l}/ \overbar{\mathcal G^{ab}_{K,l}[\tor] })[\tor] \simeq \C_{K,l}^{0}/\mathcal T_{q,l} \simeq \CL^{0}_{l}(K).$$

Finally, note that the $p$-part of the torsion group of $\mathcal G^{ab}_K$ is trivial and hence combining all primes $l$ different from $p$ we get: 
$$\CL^{0}_{non-p} (K) \simeq (\mathcal G^{ab}_{K} / \overbar{\mathcal G^{ab}_{K}[\tor] })[\tor].$$

\end{proof}

These two results imply the only if part of the theorem \ref{main}. Now, we are going to discuss the question about the other implication. Our goal is to show that for a given $\mathcal T_q$ and given non $p$-part of the class group there is only one possibility for $\C^{0}_K$ to fit in the exact sequence \ref{es}.

Consider the $p$-part of the exact sequence \ref{es}: 
$$1\to \mathbb Z_p^{\infty} \to \C^{0}_{K, p} \to \CL^{0}_{p}(K) \to 1.  $$

By using the fact that this sequence is totally non-split we will show(see lemma \ref{ZpLem}) that this implies $\C^{0}_{K, p} \simeq \mathbb Z_p^{\infty}$, in particular $\mathcal G^{ab}_{K,p}$\emph{ doesn't depend on }$\CL^{0}_{p}(K)$.  

We fix a prime number $l \ne p$ and consider the $l$-part which is of course also totally non-split: 

\begin{equation} \label{es2}
1\to \mathcal T_{q,l} \to \C^{0}_{K, l}  \to \CL^{0}_{l} (K) \to 1 
\end{equation}

Obviously, if $\CL^{0}_{l} (K) \simeq 0$ then $\C^{0}_{K, l} \simeq \mathcal T_{q,l}$. Our goal is to show that even if $\CL^{0}_{l} (K)$ is not the trivial group then this totally non-split sequence determines $\C^{0}_{K, l}$ uniquely.  In order to achieve our goal we need the following:
\begin{theorem}\label{group}
Let $\{ C_i \}$ be a countable set of finite cyclic abelian $l$-groups with orders of $C_i$ are not bounded as $i$ tends to infinity and let $A$ be any finite abelian $l$-group.
Then up to isomorphism there exists and unique torsion abelian $l$-group $B$ satisfying two following conditions:
\begin{enumerate}
	\item There exists an exact sequence: $1 \to A \to B \to \oplus_{i \ge 1} C_i \to 1$;
	\item $A$ is the union of all divisible elements of $B$: $A = \cap_{n \ge 1} nB$.
\end{enumerate} 
\end{theorem}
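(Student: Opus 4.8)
The plan is to prove uniqueness by invoking Ulm's classical classification of countable reduced abelian $l$-groups, and to prove existence by an explicit amalgamation. First I would record two reductions. Since $\bigoplus_{i\ge 1} C_i$ is a direct sum of finite cyclic groups it is reduced, i.e. $\bigcap_{k\ge 1} l^k(\bigoplus_i C_i)=0$; hence for any extension $1\to A\to B\to\bigoplus_i C_i\to 1$ an element of $\bigcap_k l^kB$ maps to $0$, giving automatically $\bigcap_k l^kB\subseteq A$. Thus condition $2$ is equivalent to the single inclusion $A\subseteq\bigcap_k l^kB$, i.e. to the requirement that every element of $A$ have infinite height in $B$. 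In particular $l^\omega B:=\bigcap_k l^kB=A$ is finite, so the maximal divisible subgroup of $B$ (contained in $A$) is trivial and $B$ is a countable reduced $l$-group, exactly the setting of Ulm's theorem.

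For uniqueness I would compute the Ulm invariants $f_B(\alpha)=\dim_{\mathbb F_l}\big((l^\alpha B)[l]/(l^{\alpha+1}B)[l]\big)$ and show they depend only on $A$ and on $\{C_i\}$. Above $\omega$ this is immediate: $l^\omega B=A$ forces $l^{\omega+m}B=l^mA$, so $f_B(\omega+m)=f_A(m)$ is the $m$-th Ulm invariant of the finite group $A$. The delicate part, which I expect to be the main obstacle, is the finite range $n<\omega$: the socles $(l^nB)[l]$ are genuinely sensitive to how $A$ is glued in (for the split extension $A\oplus\bigoplus_iC_i$ they differ), so $f_B(n)$ cannot be read off the socle directly. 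The remedy is to pass to the layer quotients $g_B(n):=\dim_{\mathbb F_l}l^nB/l^{n+1}B$. Because $A\subseteq l^nB$ for every finite $n$, one has $l^n(B/A)=(l^nB)/A$, whence $l^nB/l^{n+1}B\cong l^n(B/A)/l^{n+1}(B/A)$ compatibly with multiplication by $l$; thus $g_B(n)$ together with the transition maps between consecutive layers coincides with that of $\bigoplus_iC_i$ and is determined by $\{C_i\}$. Finally I would invoke the identity, valid in any abelian $l$-group, $f_B(n)=\dim_{\mathbb F_l}\ker\big(l:\,l^nB/l^{n+1}B\to l^{n+1}B/l^{n+2}B\big)$, proved by a direct diagram chase giving $(l^nB)[l]/(l^{n+1}B)[l]\xrightarrow{\sim}\ker(l)$. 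This expresses $f_B(n)$ through the layer data alone, so $f_B(n)=f_{\bigoplus_iC_i}(n)$. Hence every Ulm invariant of $B$ is forced by $(A,\{C_i\})$, and Ulm's theorem yields uniqueness up to isomorphism.

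For existence I would first reduce to cyclic $A$. Writing $A\simeq\bigoplus_{j=1}^r\mathbb Z/l^{a_j}\mathbb Z$, I would partition $\{C_i\}$ into $r$ subfamilies each still of unbounded order — possible since a strictly increasing subsequence of the orders can be dealt out round-robin — and take $B$ to be the direct sum of the groups built for each pair $(\mathbb Z/l^{a_j}\mathbb Z,\text{its subfamily})$; as $\bigcap_k l^k(-)$ commutes with countable direct sums and the quotients add up correctly, it suffices to treat $A=\mathbb Z/l^a\mathbb Z$. In that case, with $C_i=\mathbb Z/l^{c_i}\mathbb Z$ and generators $z_i$ of $\mathbb Z/l^{c_i+a}\mathbb Z$, I would set $B=\big(\bigoplus_i\mathbb Z/l^{c_i+a}\mathbb Z\big)\big/\langle\,l^{c_i}z_i-l^{c_j}z_j\,\rangle$ and let $a$ be the common image of the order-$l^a$ elements $l^{c_i}z_i$. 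Killing $a$ recovers $\bigoplus_i\mathbb Z/l^{c_i}\mathbb Z=\bigoplus_iC_i$, while $a=l^{c_i}z_i$ with $c_i$ unbounded shows $a$ has infinite height, so condition $2$ follows from the first reduction; a short computation with the relation submodule $\{(l^{c_k}m_k)_k:\sum_k m_k=0\}$ confirms $l^{a-1}a\neq0$, i.e. $A$ embeds with the correct order.

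Finally I would note that condition $2$ is indispensable and reflects the non-splitting phenomenon of the surrounding section: the split extension $A\oplus\bigoplus_iC_i$ has $A$ of finite height, violates condition $2$, and has finite Ulm invariants differing from the above by those of $A$. It is precisely the infinite-height requirement that pins the isomorphism type down to a single class.
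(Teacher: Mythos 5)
Your proposal is correct, but your uniqueness argument takes a genuinely different route from the paper's. The paper explicitly sets out to avoid Ulm theory (it remarks that it will ``work with the reduced part directly not referring to the Ulms invariants at all'') and instead argues by hand: from any admissible $B$ it extracts the elements $a_i = k_i x_i \in A$, where $x_i$ lifts a generator of $C_i$ and $k_i = \# C_i$, proves that this sequence \emph{strongly generates} $A$ (for every $M$ the set $\{a_i : k_i \ge M\}$ generates $A$), introduces a notion of \emph{large overlap} between two admissible groups, shows that large overlap forces isomorphism by writing down explicit mutually inverse maps on generators, and finally, given arbitrary admissible $B$ and $B'$, interleaves their defining sequences along a partition $\mathbb N = I_1 \cup I_2$ to build a third admissible group $D$ having large overlap with each, so that $B \simeq D \simeq B'$. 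You replace all of this with: $l^{\omega}B = \bigcap_{n} nB = A$, so $B$ is countable and reduced; the finite Ulm invariants satisfy $f_B(n) = f_{\oplus_i C_i}(n)$ (your layer-quotient identity $(l^nB)[l]/(l^{n+1}B)[l] \simeq \ker\bigl(l : l^nB/l^{n+1}B \to l^{n+1}B/l^{n+2}B\bigr)$ is correct, and the passage to $B/A$ is justified precisely because $A \subseteq l^nB$ for every finite $n$); the transfinite ones satisfy $f_B(\omega+m) = f_A(m)$; then Ulm's theorem finishes. Your route is shorter and more conceptual --- it exhibits the theorem as saying that the Ulm invariants of $B$ are forced, the finite ones by $\oplus_i C_i$ and the transfinite ones by $A$ --- at the price of invoking a classification theorem; the paper's route is longer but elementary and self-contained, and it produces the isomorphism explicitly. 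On the existence side the two arguments are close in spirit (both are generators-and-relations amalgamations gluing $A$ into summands of unbounded order); your reduction to cyclic $A$, by splitting $\{C_i\}$ into finitely many subfamilies of unbounded order and summing, is a clean alternative to the paper's device of distributing the finitely many generators of $A$ cyclically along a sparse subsequence of indices $j_i$ with $k_{j_i} \ge l^{i}$.
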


Applying the Pontryagin duality to the exact sequence \ref{es2}  we get:

$$ 1\leftarrow (\mathcal T_{q,l})^{\vee} \leftarrow (\C^{0}_{K, l})^{\vee} \leftarrow (\CL^{0}_{l} (K))^{\vee} \leftarrow 1.
$$ 
We will show in corollary \ref{cor1} that this sequence dual to the sequence \ref{es2} satisfies conditions of the theorem \ref{group} and therefore $\C^{0}_{K, l}$ is uniquely determined, since its dual $(\C^{0}_{K, l})^{\vee}$ is uniquely determined.

\section{Proof of Lemmas}
In this section we are going to prove all the necessarily lemmas needed for our proof. Let us start from recalling some basic facts about pro-finite abelian groups. A standard references are \cite{Kapl} and \cite{Fuchs}.
\subsection{Preliminaries}
Let $A$ an abelian, not necessarily topological group. If this group is finitely generated then the structure theorem says that $A$ is isomorphic to $\mathbb Z^{r} \oplus A_{\tor}$ where $r$ is a non-negative integer called rank and $A_{\tor}$ is a finite abelian group. Given two such groups we have that they are isomorphic if and only if they ranks and torsion parts coincide. The structure of an infinitely generated abelian group is more complicated. An element $x$ of the abelian group $A$ is \emph{divisible} if for any $n \in \mathbb N$ there exists $y$ such that $x = ny$. A group $A$ is \emph{divisible} if all its elements are divisible. For example $\mathbb Q$ is divisible. Another example is the so-called \emph{Pr\"{u}fer $p$-group} which is defined as union of all $p^{k}$ roots of unity in $\mathbb C^{\times}$ for a fixed prime number $p$: $Z(p^{\infty}) = \{ \zeta \in \mathbb C^{\times} | \zeta^{p^{k}} = 1, k \in \mathbb N  \}$. Note that we have isomorphism of abstract groups: $Z(p^{\infty}) \simeq \mathbb Q_p / \mathbb Z_p$, where $\mathbb Q_p$ denotes the abelian group of $p$-adic numbers and $\mathbb Z_p$ is a subgroup of all $p$-adic integers.

A group is called \emph{reduced} if it has no divisible elements. 
\begin{lem}
Each abelian group $A$ contains the maximal divisible subgroup $D$ and is isomorphic to the direct sum of $D$ and some reduced group $R$: $A \simeq D \oplus R$.
\end{lem}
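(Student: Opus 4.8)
The plan is to prove the statement in three stages: first exhibit the maximal divisible subgroup $D$, then split it off as a direct summand, and finally check that the complement is reduced. Throughout I read ``reduced'' in its operative sense, namely that the group contains no nonzero divisible subgroup (equivalently, its maximal divisible subgroup is trivial).

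First I would construct $D$. The key elementary observation is that the sum of any family of divisible subgroups is again divisible: if each $E_i \leq A$ is divisible and $x = x_{i_1} + \dots + x_{i_k}$ with $x_{i_j} \in E_{i_j}$, then for every $n$ each summand satisfies $x_{i_j} = n y_{i_j}$ with $y_{i_j} \in E_{i_j}$, so $x = n(y_{i_1} + \dots + y_{i_k}) \in n \sum_i E_i$. Since the trivial subgroup is divisible, the family of divisible subgroups is nonempty, and I would define $D$ to be the sum of all of them. By the observation $D$ is divisible, and by construction it contains every divisible subgroup; hence it is the unique largest divisible subgroup of $A$.

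The heart of the argument is that $D$ splits off, and for this I would invoke that a divisible abelian group is an injective $\mathbb Z$-module. To stay self-contained I would prove injectivity directly by Zorn's lemma: given an inclusion $H \leq G$ and a homomorphism $f : H \to D$, consider the poset of partial extensions $(H', f')$ with $H \leq H' \leq G$ and $f'|_H = f$, ordered by restriction. A maximal element $(H^\ast, f^\ast)$ must have $H^\ast = G$, for otherwise I could pick $g \in G \setminus H^\ast$ and extend over $\langle H^\ast, g\rangle$: if $m$ is the least positive integer with $mg \in H^\ast$, divisibility lets me solve $m z = f^\ast(mg)$ in $D$ and set $f^\ast(g)=z$ (a routine check shows $\tilde f(h+kg) = f^\ast(h)+kz$ is well defined), while if no positive multiple of $g$ lies in $H^\ast$ any image for $g$ works; either case contradicts maximality. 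Applying this to the inclusion $D \hookrightarrow A$ and to $\id_D : D \to D$ yields a retraction $\pi : A \to D$ with $\pi|_D = \id_D$, and setting $R = \ker \pi$ gives $A = D \oplus R$.

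It then remains to verify that $R$ is reduced, which is immediate from the maximality of $D$: any divisible subgroup $E \leq R$ is in particular a divisible subgroup of $A$, hence $E \leq D$; but $E \leq R$ and $D \cap R = 0$, so $E = 0$. The only genuinely non-formal step is the splitting, so I expect the injectivity of divisible groups — specifically the transfinite/Zorn extension argument — to be the main obstacle, while the construction of $D$ and the reducedness of $R$ are bookkeeping. As an alternative to the injectivity route, one could instead apply Zorn's lemma directly to the subgroups $R \leq A$ with $R \cap D = 0$ and show a maximal such $R$ satisfies $D + R = A$, using divisibility of $D$ to absorb multiples; I would keep the injectivity argument as the primary line since it isolates the reusable fact cleanly.
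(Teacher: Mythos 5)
Your proof is correct and complete. Note that the paper gives no argument of its own for this lemma: it simply refers to Chapter~3 of Fuchs, so there is no in-paper proof to diverge from, and your argument (the sum of all divisible subgroups is divisible and hence is the maximal one; divisible abelian groups are injective $\mathbb Z$-modules via the Zorn/Baer extension step; injectivity applied to $\id_D$ along $D \hookrightarrow A$ yields a retraction, so $A = D \oplus \ker\pi$; maximality of $D$ forces the complement to be reduced) is exactly the standard proof found in that reference. Your well-definedness check in the extension over $\langle H^\ast, g\rangle$ is right, and the alternative Zorn argument on complements $R$ with $R \cap D = 0$ would also work. One clarification you made is genuinely necessary: the paper defines ``reduced'' as ``has no divisible elements,'' but the set of elements of $A$ divisible by every $n$ can strictly contain the maximal divisible subgroup (it need not itself be a divisible group), so the lemma is true under your operative reading --- no nonzero divisible subgroup --- and your explicit adoption of that definition repairs a looseness in the paper rather than introducing a gap.
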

\begin{proof}
Proof of this and the following lemma could be find at the chapter 3 of the book \cite{Fuchs}. 
\end{proof}

The structure of the divisible subgroup is clear. 
\begin{lem}\label{Reduced}
Any divisible group $D$ is isomorphic to the direct sum of copies of $\mathbb Q$ and $Z(p^{\infty}) $.
\end{lem}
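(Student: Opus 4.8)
The plan is to prove this classical structure theorem in three stages: first split $D$ into a torsion part and a torsion-free part, then show the torsion-free part is a direct sum of copies of $\mathbb Q$, and finally show the torsion part is a direct sum of Prüfer groups $Z(p^{\infty})$. Throughout I would freely use that a divisible abelian group is the same as an injective $\mathbb Z$-module: since $\mathbb Z$ is a principal ideal domain, Baer's criterion reduces injectivity to solving divisibility equations on the ideals $(n)$, which is exactly divisibility. The consequence I really want from this is that a divisible subgroup of any abelian group is automatically a direct summand.

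First I would check that the torsion subgroup $T = D[\tor]$ is itself divisible: if $x \in T$ has order $k$ and $x = my$ with $y \in D$ (possible since $D$ is divisible), then $kmy = kx = 0$, so $y$ has finite order and $y \in T$. Thus $T$ is a divisible subgroup, hence a direct summand, and the sequence $0 \to T \to D \to D/T \to 0$ splits, giving $D \simeq T \oplus V$ with $V \simeq D/T$ torsion-free and divisible. For $V$, division by a nonzero integer exists by divisibility and is unique by torsion-freeness, so $V$ carries a $\mathbb Q$-module structure; it is therefore a $\mathbb Q$-vector space, and choosing a basis (via Zorn's lemma in the infinite-dimensional case) yields $V \simeq \bigoplus \mathbb Q$. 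For $T$, the primary decomposition $T \simeq \bigoplus_p T_p$ reduces matters to a single divisible $p$-group $T_p$ (a primary component of a divisible group is again divisible), so everything comes down to showing that a divisible $p$-group is a direct sum of copies of $Z(p^{\infty})$.

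This last claim is the heart of the argument. I would consider the socle $T_p[p] = \{ x \in T_p : px = 0 \}$, which is a vector space over $\mathbb F_p$, and fix a basis $\{ x_i \}_{i \in I}$. Using divisibility I lift each $x_i$ to a coherent sequence $x_{i,1} = x_i, x_{i,2}, x_{i,3}, \dots$ with $p\, x_{i,1} = 0$ and $p\, x_{i,k+1} = x_{i,k}$; the subgroup $P_i$ generated by this sequence is isomorphic to $Z(p^{\infty})$ and has socle $P_i[p] = \mathbb F_p x_i$. It then remains to show that the sum $H = \sum_i P_i$ is direct and that $H = T_p$. Directness follows by pushing a hypothetical finite relation down to the socle: multiplying through by the largest power of $p$ occurring sends the terms of maximal order into the socle and annihilates the rest, producing a nontrivial $\mathbb F_p$-relation among the $x_i$, which is impossible.

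The exhaustion $H = T_p$ is where the real leverage lies, and I expect the bookkeeping here to be the main obstacle. The point is that $H$ is a direct sum of divisible groups, hence divisible, hence a direct summand $T_p \simeq H \oplus H'$. By construction $H[p] = \bigoplus_i \mathbb F_p x_i$ is the full socle $T_p[p]$, so $H'[p] \subseteq T_p[p] = H[p]$ together with $H' \cap H = 0$ forces $H'[p] = 0$; but a nonzero $p$-group always has nonzero $p$-torsion, so $H' = 0$ and $T_p = H \simeq \bigoplus_i Z(p^{\infty})$. Assembling the three stages gives $D \simeq \left( \bigoplus \mathbb Q \right) \oplus \left( \bigoplus_p \bigoplus_{I_p} Z(p^{\infty}) \right)$, as claimed. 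The only genuinely delicate points are the coherent lifting through repeated division by $p$ and the transfinite choice of a socle basis when the index set is infinite; the remaining steps are formal.
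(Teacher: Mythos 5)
Your proof is correct. The paper itself gives no argument for this lemma: it simply cites Chapter 3 of Fuchs' book, and what you have written out is precisely the classical proof found there --- divisible $=$ injective over $\mathbb Z$ via Baer's criterion, hence divisible subgroups split off; the torsion part splits, the torsion-free quotient is a $\mathbb Q$-vector space, and each primary component is exhausted by Pr\"ufer groups built over a basis of the socle. All the delicate steps check out: the directness argument (multiplying a finite relation by $p^{n-1}$ to land in the socle) is sound because each $P_i[p]=\mathbb F_p x_i$ and the $x_i$ are independent, and the exhaustion step correctly combines $H'[p]\subseteq H\cap H'=0$ with the fact that a nonzero $p$-group has nonzero socle.
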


The structure of the reduced part is more complicated and usually involves the theory of Ulms invariants. In this paper we will work with the reduced part directly not referring to the Ulms invariants at all. 
\subsubsection{The Pontryagin Duality}
We need to recall some properties of the Pontryagin duality for locally compact abelian groups. A good reference including some historical discussion is \cite{Pont}. Let $\T$ be the topological group $\mathbb R / \mathbb Z$ given with the quotient topology. If $A$ is any locally compact abelian group one consider the Pontryagin dual $A^{\vee}$ of $A$ which is the group of all continuous homomorphism from $A$ to $\T$ : $$ A^{\vee} = \Hom(A, \T). $$ 
This group has the so-called compact-open topology and is a topological group. Here we list some properties of the Pontryagin duality we use during the proof: 
 \begin{enumerate}
 	\item The Pontryagin duality is a contra-variant functor from the category of locally compact abelian groups to itself;
	\item If $A$ is a finite abelian group treated with the discrete topology then $A^{\vee} \simeq A$ non-canonically;
	\item We have the canonical isomorphism: $(A^{\vee})^{\vee} \simeq A$;
	\item The Pontryagin dual to the pro-finite abelian group $A$ is a discrete discrete torsion group and vice versa;
	\item The Pontryagin duality sends direct products to direct sums and vice versa;
	\item The Pontryagin dual of $\mathbb Z_p$ is $Z(p^{\infty})$ and dual of $\mathbb Q / \mathbb Z$ is the group of pro-finite integers $\widehat{\mathbb Z}$.
 \end{enumerate}
 Having stated this we are able to prove our lemmas. 
 \begin{proof}[\bf{Proof of Lemma \ref{zlem}}]\label{proof1}
 Let $A$ and $B$ be two pro-finite abelian groups such that $A \oplus \widehat{\mathbb Z} \simeq B \oplus \widehat{\mathbb Z}$. Applying the Pontryagin duality to the above isomorphism we obtain: $$ (A)^{\vee} \oplus \mathbb Q / \mathbb Z \simeq  (B)^{\vee} \oplus \mathbb Q / \mathbb Z. $$  
Now since each abelian group is isomorphic to the direct sum of its reduced and divisible components and using the fact that $\mathbb Q/\mathbb Z$ is divisible we have that reduced part of $(A)^{\vee}$ and $(B)^{\vee}$ are isomorphic. Now, according to the Lemma \ref{Reduced} each divisible part  of $(A)^{\vee} \oplus \mathbb Q / \mathbb Z$ is direct sum of copies of $\mathbb Q$ and $Z(p^{\infty}) $ and since $\mathbb Q / \mathbb Z \simeq \oplus_{p} Z(p^{\infty})$ divisible parts of $(A)^{\vee}$ and $(B)^{\vee}$ are isomorphic. Therefore $(A)^{\vee}$ and $(B)^{\vee}$ are isomorphic and hence $A \simeq B$.    

 \end{proof}
 \subsection{Class Field Theory}\label{ClassField}
 We will start from the description of local aspects of the class field theory. Let $L$ be a local field of positive characteristic $p>0$. In other words $L$ is a completion of a global function field $K$ with respect to the discrete valuation associated to the place $v$ of $K$. This field is isomorphic to the field of Laurant series with constant field $\mathbb F_{q^{n}}$ and the corresponding ring of integers $\mathcal O_L$ is the ring of formal power series:  $L \simeq \mathbb F_{q^{n}} ((x))$, $\mathcal O_L \simeq \mathbb F_{q^{n}} [[x]]$. One way to construct abelian extensions of $L$ is to take the algebraic closure $\overbar{\mathbb F_{q^{n}}}$ of the constant field $\mathbb F_{q^{n}}$ which has Galois group $\Gal({\overbar{\mathbb F_{q^{n}}} : \mathbb F_{q^n}}) \simeq \widehat{\mathbb Z}$. This is the maximal unramified abelian extension of $L$. Another way to construct abelian extensions is to add to $L$ a root of the equation $x^{e} - t=0$, where $e$ is natural number such that $\gcd(e, p) =1$. This is totally ramified extension. Denoting by $I_{L} = \Gal^{ram}( {L^{ab} : L} )$ the inertia subgroup of $\mathcal G^{ab}_{L}$ we have the following split exact sequence: $$1 \to I_{L}  \to \mathcal G^{ab}_{L} \to \widehat{\mathbb Z} \to 1.$$ Recall that we also have the split exact sequence given via the valuation map: $$1 \to \mathcal O^{\times}_L \to L^{\times} \to \mathbb Z \to 1 .$$ The crucial point is that the local Artin map: $L^{\times} \to \mathcal G^{ab}_{L} $ induces   isomorphism of topological groups between the completion $\widehat{L^{\times}}$ of $L^{\times}$ and $\mathcal G^{ab}_{L}$ such that two exact sequences are isomorphic:  
 
 \[\xymatrix@=1.1em{
  1\ar[r] &  \widehat{\mathcal O^{\times}_L}\simeq \mathcal O^{\times}_L \ar[r]\ar[d] & \widehat{L^{\times}} \ar[r]\ar[d] & \widehat{\mathbb Z} \ar[d]\ar[r] & 1 \\
       1\ar[r] & I_{L} \ar[r] & \mathcal G^{ab}_{L} \ar[r] & \Gal({\overbar{\mathbb F_{q^n}} : \mathbb F_{q^n}})  \ar[r] & 1 \\
}\]

Now if $K$ is a global function field then it is possible to give a similar description of $\mathcal G^{ab}_{K}$ via Idele-class group. Let $\I_K$ denotes the multiplicative group of Ideles of $K$. This is the restricted direct product $\I_K = \prod'_{v} K^{\times}_{v}$, this product is taken over places $v$ of $K$ with respect to $\mathcal O_v^{\times}$. One defines the basic open sets as $ U = \prod'_{v} U_v $, where $U_v$ open in $K^{\times}_v$ and almost all $U_v = \mathcal O_v^{\times}$. Under the topology generated by such $U$ this becomes a topological group. The multiplicative group $K^{\times}$ is embedded to $\I_K$ diagonally as discrete subgroup and the quotient $\C_K$ is the \emph{ Idele class group} of $K$. This is a topological group, but not pro-finite. One defines the global Artin map $\C_K \to \mathcal G^{ab}_{K}$ . This map is injective, but not surjective. Similar to the local case it induces isomorphism of the pro-finite completion of $\C_K$ and $\mathcal G^{ab}_{K}$ as topological groups: $\widehat{\C_K} \simeq \mathcal G^{ab}_{K}$. 
 
 \subsection{Deriving the main exact sequence}
 
 Now our goal is to derive the exact sequence \ref{es}. Let $\I^{0}_K$ be the group of degree zero Ideles of $K$. It means the kernel of the degree map from $\C_K$ to $\mathbb Z$. We have : 
 $$ 1\to K^{\times} \to \I^{0}_K \to \C^{0}_K \to 1. $$
 
Let $\PP(K)$ denotes the group of ideals of $K$ and $\PP^{0}(K)$ be the kernel of the degree map to $\mathbb Z$. We also have:
$$ 1\to \mathbb F_q^{\times} \to  K^{\times}  \to \PP^{0}(K) \to \CL^{0}(K) \to 1. $$
 
There is a surjective homomorphism $\alpha$ of topological groups from $\I^{0}_K $ to $\PP^{0}(K)$, sending an Idele $(a_{P_1}, a_{P_2} , \dots)$ to the divisor $\sum v_{P_i}(a_{P_i}) \cdot P_i$. This is well-defined since for a given Idele almost all $a_{P} \in \mathcal O^{\times}_{v_{P}}$. The kernel of this map is $\prod_v \mathcal O^{\times}_v$. Moreover, this map sends principal Ideles to principal ideals and hence induces the surjective quotient map $\hat{\alpha}$ from $ \C^{0}_K$ to $\CL^{0}(K)$. We have the following snake-lemma diagram:  
\[\xymatrix{
  & 1 \ar[d]  &1 \ar[d]  & 1 \ar[d]\\ 
 1\ar[r] & \mathbb F_q^{\times} \ar[r]\ar[d] & \prod_v \mathcal O^{\times}_v \ar[r]\ar[d] & \ker \hat{\alpha} \ar[d]\\
 1\ar[r] & K^{\times} \ar[r]\ar[d]\hole
      & \I^{0}_K \ar[r]\ar[d]\hole
      & \C^{0}_K \ar[r]\ar[d]_(.6)(.3)\hole="hole" & 1 \\
  1\ar[r] & K^{\times}/ \mathbb F_q^{\times} \ar[r]\ar[d] & \PP^{0}(K) \ar[r]\ar[d] & \CL^{0}(K) \ar[d] \ar[r] & 1\\
  & 1 \ar[r]
      \ar@{<-} `l [lu]-(.5,0) `"hole" `[rrruu]+(.5,0) `[rruuu] [rruuu]
      & 1 \ar[r] & 1 \\
}\]

And therefore we have: 

$$1\to \mathbb F_{q}^{\times} \to \prod_v \mathcal O^{\times}_v  \to \C^{0}_K \to \CL^{0}(K) \to 1. $$ \label{proof2}

\subsection{On the Structure of the Kernel}
Now we will give an explicit description of the group $\ker{\hat{\alpha}} \simeq (\prod_v \mathcal O^{\times}_v )/ \mathbb F_q^{\times} $. If $v$ is a place of degree $n$ a global function field $K$ with the exact constant field $\mathbb F_q$, then $K_v$ is the field of Laurant series with constant field $\mathbb F_{q^{n}}$ and $\mathcal O_v$ is the ring of formal power series:  $K_v \simeq \mathbb F_{q^{n}} ((x))$, $\mathcal O_v \simeq \mathbb F_{q^{n}} [[x]]$. A formal power series is invertible if and only if it has non-zero constant term and therefore: $$\mathcal O^{\times}_v \simeq \mathbb F^{\times}_{q^{n}} \times (1 + t \mathbb F_{q^{n}}[[t]]).$$

\begin{lem}
We have isomorphism of topological groups: $1 + t \mathbb F_{q^{n}}[[t]] \simeq \prod_{\mathbb N} \mathbb Z_p$.
\end{lem}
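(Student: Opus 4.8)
The plan is to identify the group $U = 1 + t\mathbb{F}_{q^n}[[t]]$ as a torsion-free compact pro-$p$ group and then read off its structure by Pontryagin duality, in the same spirit as the proof of Lemma \ref{zlem}. First I would observe that $U$ is the projective limit of the finite quotients $U/U^{(k)}$, where $U^{(k)} = 1 + t^k\mathbb{F}_{q^n}[[t]]$. Since each $U/U^{(k)}$ is a finite abelian $p$-group, the group $U$ is a compact (hence pro-finite) abelian pro-$p$ group, and it is second countable because it is a closed subgroup of the metrizable group $\mathbb{F}_{q^n}[[t]]$.

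The essential characteristic-$p$ input is that $U$ is \emph{torsion-free}. Because $\mathbb{F}_{q^n}$ has characteristic $p$, the Frobenius identity gives $(1 + \sum_{k \ge 1} a_k t^k)^p = 1 + \sum_{k \ge 1} a_k^p t^{pk}$, so the $p$-th power of a non-trivial $1$-unit is again non-trivial. Hence $U$ has no element of order $p$, and being pro-$p$ it is torsion-free. Passing to the Pontryagin dual $U^{\vee}$, which by property (4) is a discrete torsion $p$-group, torsion-freeness of $U$ translates into divisibility of $U^{\vee}$: multiplication by $p$ is injective on $U$, hence surjective on $U^{\vee}$, and a $p$-divisible $p$-group is divisible.

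Now I would invoke the structure of divisible groups from Lemma \ref{Reduced}: since $U^{\vee}$ is a torsion $p$-group it is a direct sum $\bigoplus_{i \in I} Z(p^{\infty})$ of Pr\"ufer groups. The index set $I$ is countable because $U^{\vee}$ is the dual of a second countable compact group, and it is infinite because $U/U^p$ is infinite. Indeed, the Frobenius computation above shows $U^p = 1 + t^p\mathbb{F}_{q^n}[[t^p]]$, so $U/U^p$ is detected by the coefficients $a_k$ with $p \nmid k$, of which there are infinitely many; dually, this forces the $p$-torsion of $U^{\vee}$, and therefore $I$, to be infinite. Hence $I$ is countably infinite and $U^{\vee} \simeq \bigoplus_{\mathbb{N}} Z(p^{\infty})$.

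Finally, dualizing back and using properties (3), (5) and (6)---the dual of $Z(p^{\infty})$ is $\mathbb{Z}_p$, duality is an involution, and it exchanges direct sums with direct products---yields $U \simeq \prod_{\mathbb{N}} \mathbb{Z}_p$ as topological groups. The main obstacle is the torsion-freeness step, which is where characteristic $p$ is genuinely used, together with the bookkeeping needed to pin the index set down as countably infinite rather than merely some cardinal; once divisibility of the dual is established, the structure theorem and the cardinality count finish the argument cleanly.
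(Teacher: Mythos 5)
Your proof is correct, and it takes a genuinely different route from the paper's, which in fact gives no argument at all: it simply cites Neukirch's chapter on local fields, where the isomorphism is obtained \emph{constructively}. There one fixes an $\mathbb{F}_p$-basis $\omega_1,\dots,\omega_d$ of $\mathbb{F}_{q^n}$ and shows that every one-unit is a unique convergent product $\prod_{\gcd(i,p)=1}\prod_{j}(1+\omega_j t^{i})^{c_{ij}}$ with $c_{ij}\in\mathbb{Z}_p$, i.e.\ the elements $1+\omega_j t^{i}$ form a free topological $\mathbb{Z}_p$-basis. Your argument is structural instead: Frobenius gives torsion-freeness, torsion-freeness makes the Pontryagin dual a divisible torsion $p$-group, hence by Lemma \ref{Reduced} a direct sum of copies of $Z(p^{\infty})$, and second countability plus the infinitude of $U/U^{p}$ pin the index set down to $\mathbb{N}$. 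What your route buys is the clean general fact that a second-countable torsion-free abelian pro-$p$ group is a finite or countable product of copies of $\mathbb{Z}_p$, argued with exactly the duality toolkit this paper already uses (compare the proof of Lemma \ref{zlem}); what the constructive route buys is explicit topological generators, which matter for explicit class field theory but are irrelevant here.

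One sentence of yours does need repair: $U/U^{p}$ is not literally ``detected by the coefficients $a_k$ with $p\nmid k$,'' since coefficient extraction is not a homomorphism and multiplication by elements of $U^{p}$ does disturb those coefficients, e.g.\ $(1+t)(1+t^{p})=1+t+t^{p}+t^{p+1}$. The conclusion you need (that $U/U^{p}$ is infinite) is still easy and rigorous: since $U^{p}=1+t^{p}\mathbb{F}_{q^n}[[t^{p}]]$, its image in the finite quotient $U/U^{(k)}$ has order $q^{n\lfloor (k-1)/p\rfloor}$, so $U/\bigl(U^{p}U^{(k)}\bigr)$ has order $q^{n\left((k-1)-\lfloor (k-1)/p\rfloor\right)}$, which is unbounded as $k\to\infty$; hence $U/U^{p}$ is infinite, its dual $U^{\vee}[p]$ is infinite, and your cardinality count (ruling out a finite product $\mathbb{Z}_p^{r}$, which is otherwise consistent with torsion-freeness) goes through.
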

\begin{proof}
See \cite{Neuk}, section on local fields.
\end{proof}

Denoting by $\mathcal T_q$ the group $(\prod_v \mathbb F^{\times}_{q^{\deg(v)}}) / \mathbb F^{\times}_q$, we obtain: 

$$ (\prod_v \mathcal O^{\times}_v )/ \mathbb F_q^{\times} \simeq \mathcal T_q \times \mathbb Z_p^{\infty}  .$$

\subsubsection{Description of $\mathcal T_q$}\label{proof3}

At the first time it seems that the group $\mathcal T_q$ depends on $K$ since the product $\prod _v \mathcal O^{\times}_v$ is taken over all places of $K$. Our first goal is to show that it actually depends only on $q$. \begin{bf} Recall that: \end{bf} because of the  Weil-bound each function field $K$ has places of all except finitely many degrees.

Consider the group $A_q = \prod_v \mathbb F^{\times}_{q^{\deg(v)}}$.  By the Chinese reminder theorem we have: $$A_q= \prod (\mathbb Z/ l^{m} \mathbb Z)^{a_{l,m}}, $$ 
where $a_{l,m}$ is either a non-negative integer or infinity. Since $\mathbb F^{\times}_{q^{n}}$ is a cyclic group of order $q^{n}-1$ we have the direct description of $a_{l,m}$: it is the cardinality of the set $  \{ v \in \Pl(K)  |  \deg(v)=n, \ord_l (q^{n} - 1) = m   \} $. Note that $a_{p, m} = 0$ for all $m \in \mathbb N$. 

\begin{lem}
Each $a_{l,m}$ is either $0$ or infinity. 
\end{lem}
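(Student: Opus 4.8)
The plan is to translate the assertion about the cardinal $a_{l,m}$ into a statement about admissible degrees and then exploit an elementary multiplicativity property of the $l$-adic valuation of $q^n - 1$. By the description recalled just above, for $m \ge 1$ (the only values contributing a nontrivial factor $\mathbb Z/l^m\mathbb Z$) the number $a_{l,m}$ equals the number of places $v$ of $K$ whose degree $n = \deg(v)$ satisfies $\ord_l(q^n - 1) = m$; indeed each such place contributes the $l$-primary part $\mathbb Z/l^{\ord_l(q^n-1)}\mathbb Z$ of the cyclic group $\mathbb F_{q^n}^{\times}$. I would therefore introduce the set of degrees
$$ S_{l,m} = \{\, n \ge 1 : \ord_l(q^n - 1) = m \,\}, $$
so that $a_{l,m} = \sum_{n \in S_{l,m}} \#\{\, v \in \Pl(K) : \deg(v) = n \,\}$. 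The lemma then reduces to two claims: that $S_{l,m}$ is either empty or infinite, and that an infinite $S_{l,m}$ forces $a_{l,m}$ to be infinite.

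For the first claim I would show that $S_{l,m}$ is stable under multiplication by integers coprime to $l$: if $n_0 \in S_{l,m}$ and $\GCD(r,l)=1$, then $n_0 r \in S_{l,m}$ as well. Writing $Q = q^{n_0}$, one factors $Q^r - 1 = (Q-1)(1 + Q + \cdots + Q^{r-1})$; since $m \ge 1$ we have $l \mid Q - 1$, hence $Q \equiv 1 \pmod l$, whence $1 + Q + \cdots + Q^{r-1} \equiv r \pmod l$, a unit modulo $l$. The cofactor thus contributes nothing to the $l$-adic valuation and $\ord_l(Q^r - 1) = \ord_l(Q - 1) = m$. As there are infinitely many $r$ prime to $l$, a single element of $S_{l,m}$ produces infinitely many, so $S_{l,m}$ is empty or infinite. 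I want to stress that this argument is uniform in $l$ and requires no separate treatment of $l = 2$, the one prime where $\ord_2(q^n-1)$ behaves irregularly; that irregularity will matter only later, for pinning down $N_q(2)$ in Theorem \ref{tortheorem}, but not here.

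For the second claim I would invoke the fact recalled above that, by the Weil bound, $K$ has places of every degree with at most finitely many exceptions. Hence if $S_{l,m}$ is infinite, all but finitely many $n \in S_{l,m}$ carry at least one place of $K$, each contributing a summand to $a_{l,m}$, so $a_{l,m}$ is infinite. Combining the two claims yields $a_{l,m} \in \{0, \infty\}$, as asserted. I do not expect any genuine obstacle: the only delicate point is the bookkeeping at $l = 2$, which the multiplicativity trick sidesteps entirely, and the sole external input is precisely the Weil-bound consequence the authors have already isolated.
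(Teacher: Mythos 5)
Your proof is correct, and it differs from the paper's in the key arithmetic step, though both share the same skeleton: reduce to showing that the set of admissible degrees $S_{l,m}=\{n : \ord_l(q^n-1)=m\}$ is empty or infinite, then invoke the Weil-bound fact that $K$ has places of all but finitely many degrees. Where you diverge is in how one admissible degree generates infinitely many. The paper observes that the condition $\ord_l(q^n-1)=m$ depends only on $n \bmod \phi(l^{m+1})$ (Euler's theorem applied to $(\mathbb Z/l^{m+1}\mathbb Z)^{\times}$), so a single admissible degree yields a whole arithmetic progression of them; you instead prove that $S_{l,m}$ is closed under multiplication by any $r$ with $\GCD(r,l)=1$, via the factorization $Q^r-1=(Q-1)(1+Q+\cdots+Q^{r-1})$ and the observation that the cofactor is $\equiv r \pmod l$, hence an $l$-unit. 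Both arguments are elementary and both are uniform in $l$ for this lemma (the paper's proof here also needs no case split at $l=2$, so the advantage you claim on that score is not really a point of difference). What the paper's periodicity viewpoint buys is reuse: the same ``depends only on the degree modulo $\phi(l^{m+1})$'' device drives the later lemmas pinning down $N(l)$ and $N(2)$. What your multiplicativity argument buys is that it isolates, with proof, the exact-valuation-preservation fact $\ord_l(Q^r-1)=\ord_l(Q-1)$ for $\GCD(r,l)=1$, which is precisely the fact the paper invokes later without detailed justification (``since $\frac{l-1}{a}$ is co-prime to $l$'') in computing $N(l)=\ord_l(q^{l-1}-1)$.
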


\begin{proof}
Indeed, suppose that there exists a place $v$ of degree $n$ such that $ \ord_l (q^{n} - 1) = m$, we would like to show that then there are infinitely many such $v$. Our assumption is equivalent to the statement that $q^{n} = 1 \mod l^{m}$, but $q^{n} \ne 1 \mod l^{m+1}$. The order of the group $(\mathbb Z / l^{m+1} \mathbb Z)^{\times}$ is $\phi(l^{m+1}) = l^{m+1} - l^{m}$, where $\phi(a)$ denotes the Euler $\phi$-function. It means if $q^{n}$ satisfies our condition then for any $k \in \mathbb N$ the quantity $q^{n+ k\phi(l^{m+1})}$ also satisfies our condition. In other words, this condition depends only on $n \mod \phi(l^{m+1})$. Since each function field $K$ has places of all except finitely many degrees if there is one $v$ then there are infinitely many. 
\end{proof}

Now, given $l \ne p$ we would like to understand how many $m$ such that $a_{l, m} = 0$ do we have. First we will prove the following elementary number theory lemma.
\begin{lem}\label{element}
Let $a$ be a positive integer such that $\ord_l(a -1) =n$ for some prime number $l$. Then if $l\ne 2$ or $n \ge 2$ we have $\ord_l(a^{l} -1) = n+1$. 
\end{lem}
\begin{proof}
By the assumption of the lemma there exists an integer $b$ such that $\gcd(b,l)=1$ and  $a = 1 + bl^{n} \mod l^{n+1}$. Suppose that $l \ne 2$. For some integer $c$ we have: $$a^{l} = (1 + bl^{n} + cl^{n+1})^{l} = 1+ l(bl^{n} + cl^{n+1}) + \frac{l(l-1)}{2}(bl^{n} + cl^{n+1})^2 + \dots  = $$ $$=1+ l^{n+1}(b+cl) + \frac{l(l-1)}{2} l^{2n} (b+cl)^2 + \dots.  $$  Since $l \ne 2$ we have $a^{l} = 1+ bl^{n+1} \mod l^{m+2}$. 

Now let $l=2$ and $n\ge 2$. We have: $a = 1 + 2^{n} + b2^{n+1 } \mod 2^{n+2}$ and therefore $a^2 = 1 + 2^{n + 1} \mod 2^{n+2}$.
\end{proof}

\begin{lem}
For each odd prime number $l$ different from $p$ there exists $N(l)$ such that $a_{l,m}$ is infinite if and only if $m \ge N(l)$. Moreover $N(l)$ depends only on $q$ and doesn't depend on $K$. 
\end{lem}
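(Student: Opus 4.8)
The plan is to pin down exactly which $l$-valuations occur among the numbers $q^{d}-1$, and then transfer this information to the coefficients $a_{l,m}$ using the previous lemma together with the fact that $K$ has places of all but finitely many degrees. Fix an odd prime $l \neq p$, and let $f = \ord_l(q)$ denote the multiplicative order of $q$ in $(\mathbb{Z}/l\mathbb{Z})^{\times}$, which is well defined since $\gcd(q,l)=1$. Because $q^{d} \equiv 1 \pmod{l}$ if and only if $f \mid d$, a place $v$ can contribute a nontrivial $l$-part to $A_q$ only when $f \mid \deg(v)$; equivalently $a_{l,m}=0$ for all $m \ge 1$ unless the degree is a multiple of $f$. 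I set $N(l) = \ord_l(q^{f}-1) \ge 1$ as the candidate threshold and aim to prove that the set of values $\{\,\ord_l(q^{d}-1) : d \ge 1,\ \ord_l(q^{d}-1)\ge 1\,\}$ is exactly $\{\,m : m \ge N(l)\,\}$.

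The heart of the argument is the identity $\ord_l(q^{fe}-1) = N(l) + \ord_l(e)$ for every $e \ge 1$. To prove it I would write $e = l^{j}m$ with $\gcd(m,l)=1$ and put $a = q^{f}$, so that $\ord_l(a-1)=N(l)\ge 1$. First, since $a \equiv 1 \pmod{l}$ and $\gcd(m,l)=1$, the factorization $a^{m}-1=(a-1)(a^{m-1}+\dots+1)$ shows the cofactor is $\equiv m \not\equiv 0 \pmod l$, so $\ord_l(a^{m}-1)=\ord_l(a-1)=N(l)$: prime-to-$l$ exponents do not change the valuation. Then I would apply Lemma \ref{element} exactly $j$ times to $a^{m}$, each application raising the $l$-valuation by precisely one (this is where oddness of $l$ is essential), giving $\ord_l\big((a^{m})^{l^{j}}-1\big)=N(l)+j=N(l)+\ord_l(e)$. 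Since $q^{fe}-1=a^{e}-1$, the claimed identity follows.

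With this identity in hand the transfer is routine. As $e$ ranges over the positive integers the quantity $\ord_l(e)$ takes every nonnegative integer value, and each value infinitely often; hence $\ord_l(q^{d}-1)=m$ is solvable in $d$ precisely for $m \ge N(l)$, and for each such $m$ there are infinitely many admissible degrees $d=fe$. Because $K$ has places of all but finitely many degrees, infinitely many of these degrees are actually realized by places, so $a_{l,m}\neq 0$ for every $m \ge N(l)$; by the previous lemma such $a_{l,m}$ is then infinite. Conversely, for $1 \le m < N(l)$ there is no degree $d$ at all with $\ord_l(q^{d}-1)=m$, so $a_{l,m}=0$. This establishes the equivalence, and since $N(l)=\ord_l\big(q^{\ord_l(q)}-1\big)$ is manifestly a function of $q$ and $l$ alone, it does not depend on $K$.

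The one delicate point, and the reason the statement is restricted to odd $l$, is the \emph{exactness} of the increment in the key identity: I need each multiplication of the exponent by $l$ to raise the $l$-valuation by exactly one, never more. This is precisely the content of Lemma \ref{element} for odd $l$, whereas for $l=2$ the extra hypothesis $n\ge 2$ appearing there signals the anomalous behaviour that forces the separate treatment of the prime $2$ in the following lemmas.
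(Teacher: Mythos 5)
Your proof is correct, and it is organized differently from the paper's, although it rests on the same two ingredients. The paper sets $N(l)=\ord_l(q^{l-1}-1)$ and proceeds in three separate steps: it shows $a_{l,N(l)}\neq 0$ by passing to the arithmetic progression of degrees $u\equiv l-1 \pmod{\phi(l^{N(l)+1})}$; it propagates non-vanishing upwards ($a_{l,m}\neq 0\Rightarrow a_{l,m+1}\neq 0$) by one application of Lemma \ref{element} followed again by an arithmetic-progression argument; and it rules out $m<N(l)$ by a separate divisibility argument involving the order of $q$ modulo $l$. You instead prove a single closed-form valuation formula, $\ord_l(q^{fe}-1)=N(l)+\ord_l(e)$ with $f$ the multiplicative order of $q$ modulo $l$ (a lifting-the-exponent identity, obtained by splitting $e$ into its prime-to-$l$ part, which leaves the valuation unchanged via the cofactor computation, and its $l$-power part, handled by iterating Lemma \ref{element}), from which the exact set of realizable valuations, the infinitude of admissible degrees for each $m\ge N(l)$, and the vanishing for $m<N(l)$ all drop out simultaneously. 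The two definitions of $N(l)$ agree because $(l-1)/f$ is prime to $l$; this is precisely the computation the paper performs in its final step, so nothing is lost. Both proofs invoke Lemma \ref{element} and the fact that $K$ has places of all but finitely many degrees; your version buys a complete, induction-free description of $\ord_l(q^d-1)$ as a function of $d$ and makes the dependence on $q$ alone transparent, whereas the paper's stepwise argument avoids stating the full formula but needs three separate sub-arguments. One cosmetic remark: writing $f=\ord_l(q)$ for the multiplicative order clashes with the $l$-adic valuation notation $\ord_l(\cdot)$ used everywhere else (including in your own key identity); a different symbol would avoid ambiguity.
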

\begin{proof}
 Let $d=l-1$ and $N(l) = \ord_l(q^{d} -1)$. Then $q^{d} = 1$ in the group $(\mathbb Z/ l^{N(l)} \mathbb Z)^{\times}$, but $q^{d} \ne 1$
in the group $(\mathbb Z/ l^{N(l)+1} \mathbb Z)^{\times}$. Therefore, for each $u \in \mathbb N$ such that $u = d \mod \phi(l^{N(l)+1})$ we have: $\ord_l (q^{u} - 1)=N(l)$. Since $K$ has places of almost all degrees the set $  \{ v \in \Pl(K)  |  \deg(v)=d \mod \phi(l^{N(l)+1}) \} $ is  infinite and hence $a_{l, N(l)} \ne 0$. 
We would like to show that if $a_{l,m} \ne 0$ then $a_{l, m+1} \ne 0$. We know that there exists a place of the degree $d_0$ such that $\ord_l(q^{d_0} -1) =m$. By the previous lemma  we have $\ord_l(q^{ld_0} -1) = m+1$. Then for any place $v$ from the set $\{ v \in \Pl(K)  |  \deg(v)=ld_0 \mod \phi(l^{m+2}) \}$ we have $\ord_l (q^{\deg(v)} -1) = m+1$. This shows that if $m \ge N(l)$ then $a_{l, m}$ is infinite. 

The last step is to show that $a_{l, m} =0$ if $m$ is less than $\ord_l(q^{d} -1)$. Indeed, the order $a$ of $q$ in the group $\mathbb F^{\times}_l$ divides $(l-1)$ and then $\ord_l (q^{a} - 1 ) = \ord_l (q^{a\frac{l-1}{a}} -1) = \ord_l (q^{l-1} -1)$, since $\frac{l-1}{a}$ is co-prime to $l$. It means that if for some $u$ we have $q^{u} = 1 \mod l$, then $u=a b$ and $\ord_l(q^{u} - 1) =\ord_l(q^{ab} - 1)  \ge \ord_l(q^a -1) = \ord_l (q^{l-1} -1)$.

\end{proof}

\begin{lem}
For $l=2$ the following holds. 
\begin{enumerate}
	\item If $p=2$, then $a_{m,2} = 0$ for all $m$; 
	\item if $q=1 \mod 4$, then there exists $N(2)$ such that $a_{2,m}$ is infinite if and only if $m \ge N(2)$; 
	\item if $q=3 \mod 4$, then there exists $N(2)$ such that $a_{2,m}$ is infinite if and only if $m \ge N(2)$ or $m=1$;
\end{enumerate}
\end{lem}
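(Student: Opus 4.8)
The plan is to reduce the whole statement to determining, in each case, the set of values $\ord_2(q^n-1)$ that occur as $n$ ranges over $\mathbb N$. Recall that $a_{2,m}$ is the number of places $v$ with $\ord_2(q^{\deg v}-1)=m$, and that by the preceding lemma (each $a_{l,m}$ is $0$ or infinite) we have $a_{2,m}\ne 0$ exactly when some degree $n$ satisfies $\ord_2(q^n-1)=m$, whereupon the condition holds for a whole arithmetic progression of degrees, all realized by infinitely many places thanks to the Weil bound. Thus it suffices to compute the achievable set $S=\{\ord_2(q^n-1):n\ge 1\}$ in each case. Case (1), $p=2$, is immediate: then $q$ is even, so $q^n-1$ is odd and $\ord_2(q^n-1)=0$ for every $n$, whence $a_{2,m}=0$ for all $m\ge 1$.

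For the odd-characteristic cases I would compute $\ord_2(q^n-1)$ by splitting on the parity of $n$. If $n$ is odd then $q^n-1=(q-1)(q^{n-1}+\dots+1)$ and the second factor is a sum of an odd number of odd terms, hence odd, so $\ord_2(q^n-1)=\ord_2(q-1)$. If $n=2^k u$ with $u$ odd and $k\ge 1$, put $b=q^u$; the same cofactor argument gives $\ord_2(b-1)=\ord_2(q-1)$ and $\ord_2(b+1)=\ord_2(q+1)$, so that $\ord_2(b^2-1)=\ord_2(q-1)+\ord_2(q+1)$, which is at least $3$ in both subcases $q\equiv 1$ and $q\equiv 3 \pmod 4$. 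Since this valuation exceeds $1$, Lemma \ref{element} (the squaring step for $l=2$) applies repeatedly to $b^2\mapsto b^4\mapsto\dots$, each squaring raising the valuation by exactly one, and gives $\ord_2(q^n-1)=\ord_2(b^{2^k}-1)=\ord_2(q-1)+\ord_2(q+1)+k-1$.

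Assembling this, in case (2) with $q\equiv 1\pmod 4$ we have $\ord_2(q+1)=1$ and $s:=\ord_2(q-1)\ge 2$; odd $n$ realizes the value $s$, while $n$ with $\ord_2(n)=k\ge 1$ realizes $s+k$, so $S=\{m:m\ge s\}$ and $N(2)=s$ works. In case (3) with $q\equiv 3\pmod 4$ we have $\ord_2(q-1)=1$ and $t:=\ord_2(q+1)\ge 2$; odd $n$ realizes the value $1$, while $n$ with $\ord_2(n)=k\ge 1$ realizes $t+k$, so $S=\{1\}\cup\{m:m\ge t+1\}$, and $N(2)=t+1\ge 3$ works, with the predicted gap at $2,\dots,t$. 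The main obstacle is precisely the anomalous $2$-adic behaviour: unlike the odd-prime lemma one cannot simply iterate the squaring step starting from an odd degree, because $\ord_2(q^n-1)=\ord_2(q-1)$ is constant on odd $n$ while the squaring step of Lemma \ref{element} becomes valid only once the valuation reaches $2$. The genuine work is therefore the correct base computation $\ord_2(q^2-1)=\ord_2(q-1)+\ord_2(q+1)$ together with the parity split, and it is exactly the dichotomy between $\ord_2(q-1)$ and $\ord_2(q+1)$ — i.e. $q\equiv 1$ versus $q\equiv 3\pmod 4$ — that produces the two different shapes of $S$ and hence the two separate statements of the lemma.
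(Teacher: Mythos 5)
Your proof is correct, and it rests on the same ingredients as the paper's own proof (Lemma \ref{element}, the Weil bound supplying places of almost all degrees, and the dichotomy $q\equiv 1$ versus $q\equiv 3 \pmod 4$), but it is organized around a genuinely different intermediate statement. The paper never computes $\ord_2(q^n-1)$ for general $n$: it realizes one value ($N(2)=\ord_2(q-1)$ when $q\equiv 1$, the value $1$ and then $N(2)=\ord_2(q^2-1)$ when $q\equiv 3$) by the arithmetic-progression argument, propagates upward one step at a time by the squaring lemma, and rules out everything smaller by divisibility, namely $(q-1)\mid(q^n-1)$ always and $(q^2-1)\mid(q^n-1)$ for $n$ even. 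You instead establish the closed formula $\ord_2(q^n-1)=\ord_2(q-1)$ for odd $n$ and $\ord_2(q^n-1)=\ord_2(q-1)+\ord_2(q+1)+\ord_2(n)-1$ for even $n$ (essentially the $2$-adic lifting-the-exponent identity), using the cofactor-parity factorizations $q^u-1=(q-1)(q^{u-1}+\dots+1)$ and $q^u+1=(q+1)(q^{u-1}-\dots+1)$ for odd $u$ together with iterated applications of Lemma \ref{element} starting from $\ord_2(q^2-1)\ge 3$, and then you simply read off the attained set $S$ of valuations in both cases at once. Your route costs a slightly longer base computation but buys a complete and uniform description: the two shapes of the answer, the precise gap $2,\dots,\ord_2(q+1)$ in case (3), and the explicit values $N(2)=\ord_2(q-1)$ (case 2) and $N(2)=\ord_2(q^2-1)$ (case 3) --- which the paper needs again later, in Lemma \ref{dkforml} --- all fall out of one formula, whereas the paper's realize-and-propagate argument is shorter mainly because it reuses verbatim the machinery already set up for odd primes $l$.
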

\begin{proof}
The first statement is trivial. For the second one let $N(2) = \ord_2(q-1)$, then $N(2) \ge 2$. As before we have $q = 1 \mod 2^{N(2)}$, but $q\ne 1 \mod 2^{N(2)+1 }$. The group $(\mathbb Z/ 2^{N(2)+1} \mathbb Z)^{\times}$ has order $\phi(2^{N(2)+1})$ and hence, for each $m$ such that $m=1 \mod \phi(2^{N(2)+1})$ we have that $q^{m} = 1 \mod 2^{N(2)}$, but $q\ne 1 \mod 2^{N(2)+1 }$. Since $K$ has places of almost all degrees the set $  \{ v \in \Pl(K)  |  \deg(v)=1 \mod \phi(l^{N(2)+1}) \} $ is  infinite and hence $a_{2, N(2)} \ne 0$. Now, as in the previous lemma if $a_{l, m} \ne 0$, then $a_{l, m+1}$ is not zero\footnote{ here we use the fact that $m \ge 2$.} and obviously if $m < N(2)$ we have $a_{2,m} =0$.

Finally suppose that $q= 3 \mod 4$. By the same argument as before we have that $a_{2,1}$ is infinite, but then $q^2 = 1 \mod 8$ and hence $a_{2,2} = 0$. Let $N(2)= \ord_2(q^2 -1) \ge 3$. We have that for $a_{2, N(2)}$ is infinite and for all $k$ such that $1 < k < N(2)$ we have $a_{2, k} = 0$. Because of the same argument as before $a_{2, m}$ is infinite for all $m \ge N(2)$. 

\end{proof}

In order to show that $T_q \simeq A_q$ we need one elementary lemma.

\begin{lem}
For a given prime power $q$ there are infinitely many integer numbers $n$ such that $\gcd(\frac{q^{n}-1}{q-1} , q-1) =1$.
\end{lem}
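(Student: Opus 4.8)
The plan is to reduce the gcd in question to a completely elementary one by working modulo $q-1$. First I would expand the quotient as a geometric sum,
$$\frac{q^{n}-1}{q-1} = 1 + q + q^{2} + \cdots + q^{n-1}.$$
Since $q \equiv 1 \pmod{q-1}$, every term $q^{i}$ is congruent to $1$ modulo $q-1$, and there are exactly $n$ terms, so the whole sum satisfies
$$\frac{q^{n}-1}{q-1} \equiv n \pmod{q-1}.$$

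The key observation is then that $\gcd(a,m)$ depends only on the residue class of $a$ modulo $m$, since $\gcd(a,m) = \gcd(a+km,m)$ for every integer $k$. Applying this with $m = q-1$ to the congruence above gives
$$\gcd\left(\frac{q^{n}-1}{q-1},\, q-1\right) = \gcd(n,\, q-1).$$
Hence the condition appearing in the statement is equivalent to the purely elementary condition $\gcd(n, q-1) = 1$.

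Finally I would exhibit infinitely many such $n$ explicitly: for instance every $n$ with $n \equiv 1 \pmod{q-1}$ satisfies $\gcd(n, q-1) = \gcd(1, q-1) = 1$, and there are clearly infinitely many of these (one could equally take all $n$ coprime to $q-1$, or all primes exceeding $q-1$). This produces the desired infinite family and completes the argument.

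I do not expect any genuine obstacle here. The only step that requires a moment's thought is spotting the congruence $\frac{q^{n}-1}{q-1} \equiv n \pmod{q-1}$, after which the statement collapses to the triviality that there are infinitely many integers coprime to a fixed modulus. In fact the argument uses nothing about $q$ being a prime power: it holds verbatim for every integer $q \ge 2$.
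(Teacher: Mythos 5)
Your proof is correct, and it takes a genuinely different, more elementary route than the paper's. The paper works prime by prime: it factors $q-1 = l_1^{k_1} \cdots l_m^{k_m}$, writes $q \equiv 1 + a_i l_i^{k_i} \pmod{l_i^{k_i+1}}$ with $\gcd(a_i, l_i)=1$, and checks via a binomial expansion that for $n$ coprime to $q-1$ one has $q^{n} \equiv 1 + n a_i l_i^{k_i} \pmod{l_i^{k_i+1}}$; hence $\ord_{l_i}(q^{n}-1) = \ord_{l_i}(q-1)$ for every $i$, so the quotient $\frac{q^{n}-1}{q-1}$ is coprime to $q-1$. This is the same local valuation technique the paper uses throughout its analysis of $\mathcal T_q$ (compare lemma \ref{element}). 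Your argument replaces all of this with the single global congruence $\frac{q^{n}-1}{q-1} = 1 + q + \cdots + q^{n-1} \equiv n \pmod{q-1}$, which yields the exact identity $\gcd\bigl(\frac{q^{n}-1}{q-1},\, q-1\bigr) = \gcd(n, q-1)$. That buys you two things the paper's proof does not state: the coprimality condition on $n$ is seen to be necessary as well as sufficient, and the claim is manifestly true for every integer $q \ge 2$ with no reference to prime powers (though, to be fair, the paper's local argument never uses the prime-power hypothesis either). Both proofs end up exhibiting the same family of admissible exponents, namely the $n$ coprime to $q-1$.
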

\begin{proof}
Consider the factorization of $q-1$ into different prime factors: $q-1 = l_1^{k_1} \dots l_m^{k_m}$. We know that $q = 1 \mod l_i^{k_i}$ and $q \ne 1 \mod l_i^{k_i+1}$, for all $i$ in $\{1,\dots, m \}$. In other words there exists a natural number $a_i$ co-prime to $l_i$ such that $q = 1+ a_i l_i^{k_i} \mod l_i^{k_i+1}$. Therefore if the natural number $n$ is co-prime to $q-1$ then $q^{n} = 1 + a_in l_i^{k_i} \mod l_i^{k_i + 1}$ and then $\gcd(\frac{q^{n}-1}{q-1} , q-1)=1$.
\end{proof}

\begin{cor}
We have isomorphism $A_q \simeq \mathcal T_q$. The characteristic $p$ of the constant field of $K$ is determined by $\mathcal T_q$. 
\end{cor}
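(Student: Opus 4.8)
The plan is to deduce both claims from the explicit description of $A_q = \prod_v \mathbb{F}^{\times}_{q^{\deg(v)}}$ obtained in the lemmas above, namely $A_q \simeq \prod_{l,m}(\mathbb{Z}/l^{m}\mathbb{Z})^{a_{l,m}}$ with every $a_{l,m}$ equal to $0$ or to the countable cardinal $\aleph_0$, together with the preceding lemma on the existence of suitable degrees. Since $\mathcal{T}_q = A_q/\mathbb{F}_q^{\times}$ with $\mathbb{F}_q^{\times}$ embedded diagonally (each constant field embeds in $\mathbb{F}_{q^{\deg(v)}}$), the essential point is that dividing out one finite cyclic group cannot alter a decomposition in which every factor that occurs at all occurs infinitely often. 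First I would realise this quotient as splitting off a direct factor: by the preceding lemma choose an integer $n$ with $\gcd\!\big(\tfrac{q^{n}-1}{q-1},\,q-1\big)=1$, and, discarding the finitely many unrealised degrees via the Weil bound, assume $n=\deg(v_0)$ for an actual place $v_0$.

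The gcd condition is exactly what makes $\mathbb{F}_q^{\times}$ a direct factor of the cyclic group $\mathbb{F}^{\times}_{q^{n}}$ of order $(q-1)\cdot\tfrac{q^{n}-1}{q-1}$: by the Chinese remainder theorem $\mathbb{F}^{\times}_{q^{n}}\simeq \mathbb{F}_q^{\times}\times C_n$, where $C_n$ is the unique subgroup of order $\tfrac{q^{n}-1}{q-1}$ and the embedded constants are precisely the first factor. Writing $\rho\colon\mathbb{F}^{\times}_{q^{n}}\to\mathbb{F}_q^{\times}$ for the induced projection and $\pi_{v_0}$ for the coordinate projection of $A_q$, the composite $r=\rho\circ\pi_{v_0}\colon A_q\to\mathbb{F}_q^{\times}$ is a continuous retraction of the diagonal embedding, since $\pi_{v_0}$ sends the diagonal element attached to $x\in\mathbb{F}_q^{\times}$ to $x$ in the first factor and $\rho$ fixes it. Hence $1\to\mathbb{F}_q^{\times}\to A_q\to\mathcal{T}_q\to 1$ splits and $\mathcal{T}_q\simeq\ker r=C_n\times\prod_{v\ne v_0}\mathbb{F}^{\times}_{q^{\deg(v)}}$, which is again an explicit product of finite cyclic groups.

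It then remains to compare the primary multiplicities of $A_q$ and of $\ker r$: the two agree in all coordinates except $v_0$, where $\ker r$ carries $C_n$ in place of $\mathbb{F}^{\times}_{q^{n}}=\mathbb{F}_q^{\times}\times C_n$, so passing from $A_q$ to $\ker r$ deletes exactly the $l$-part of $\mathbb{F}_q^{\times}$ at each prime $l$. This is nontrivial only for $l\mid q-1$, where it removes the summand $\mathbb{Z}/l^{\ord_l(q-1)}\mathbb{Z}$; but $\ord_l(q-1)$ is precisely the threshold $N(l)$ from the earlier lemma, so $a_{l,\ord_l(q-1)}$ is already infinite and deleting one copy leaves it equal to $\aleph_0$. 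Formally this uses that $\dim_{\mathbb{F}_l}\!\big((l^{m-1}G)[l]/(l^{m}G)[l]\big)$ recovers whether a multiplicity is zero, finite, or infinite, and that all products run over countably many places so no multiplicity exceeds $\aleph_0$; hence every $a_{l,m}\in\{0,\aleph_0\}$ is unchanged and $\mathcal{T}_q\simeq\ker r\simeq A_q$. For the last assertion, each $\mathbb{F}^{\times}_{q^{n}}$ has order $q^{n}-1$ prime to $p$, so $a_{p,m}=0$ for all $m$, whereas for every prime $l\ne p$ some power $q^{n}\equiv 1\pmod l$ and a place of such degree exists, so $l\mid q^{n}-1$ and $\mathcal{T}_q\simeq A_q$ has $l$-torsion; thus $p$ is recovered as the unique prime for which $\mathcal{T}_q$ has no element of order $p$.

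The step I expect to be most delicate is the multiplicity comparison. Because the quotient is by the \emph{diagonal} $\mathbb{F}_q^{\times}$ rather than a single coordinate, one must first exhibit the splitting place $v_0$, and then check that the gcd condition guarantees that replacing $\mathbb{F}^{\times}_{q^{n}}$ by $C_n$ removes only summands already present with infinite multiplicity; were the discarded factor to land at a multiplicity $a_{l,m}=0$, a genuinely new cyclic summand would appear in $\mathcal{T}_q$ and destroy the isomorphism. Everything else — the construction of the retraction and reading off the characteristic — is routine once $v_0$ is chosen.
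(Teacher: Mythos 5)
Your proposal is correct and is essentially the paper's own argument: pick a place $v_0$ whose degree satisfies $\gcd\bigl(\tfrac{q^{n}-1}{q-1},q-1\bigr)=1$ (existence via the gcd lemma plus the Weil bound), use the Chinese remainder theorem to split the diagonal $\mathbb F_q^{\times}$ off as a direct factor through the $v_0$-coordinate, and conclude $\mathcal T_q\simeq\ker r\simeq A_q$ because every multiplicity $a_{l,m}$ is $0$ or $\aleph_0$, with $p$ then read off as the unique prime for which $\mathcal T_q$ has no $p$-torsion. One minor slip: for $l=2$ and $q\equiv 3\pmod 4$ one has $\ord_2(q-1)=1\neq N(2)$, so your identification of $\ord_l(q-1)$ with the threshold $N(l)$ fails there; however the deleted summand still sits at an infinite multiplicity, since $a_{2,1}=\aleph_0$ by the exceptional clause of the $l=2$ lemma (or, uniformly in $l$, because the $v_0$-coordinate itself has $l$-part $\mathbb Z/l^{\ord_l(q-1)}\mathbb Z$ by the gcd condition, so $a_{l,\ord_l(q-1)}\neq 0$ and hence is infinite), and the argument stands.
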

\begin{proof}
For the first statement recall that $\mathbb F^{\times}_q$ is embedded diagonally to the product $\prod_v \mathbb F^{\times}_{q^{\deg(v)}}$. Now pick any prime $\beta$ of $K$ of degree $m$  such that $\gcd(\frac{q^{m}-1}{q-1} , q-1) =1$ and split the last product into two parts $ \mathbb F^{\times}_{q^{m}} \oplus \prod_{v \ne \beta} \mathbb F^{\times}_{q^{\deg(v)}} $. Note that $\mathbb F^{\times}_q$ is a subgroup of $\mathbb F^{\times}_{q^{m}}$ which is direct summand. Since all these groups have the discrete topology, the quotient $ \prod_{v \ne \beta} \mathbb F^{\times}_{q^{\deg(v)}} \oplus (\mathbb F^{\times}_{q^{n}} / \mathbb F^{\times}_q )$ is topologically isomorphic to $\mathcal T_q$. Finally, since each $a_{n,l}$ is either zero or infinity we have that $A_q \simeq \mathcal T_q$.

For the second statement note that $p$ is unique prime such that $a_{p, m} = 0$ for all $m \in \mathbb N$.
  
\end{proof}

\begin{lem}
For odd prime number $l$ we have $N(l) = \ord_l(p^{l-1} -1) + \ord_l{d_K}$.

\end{lem}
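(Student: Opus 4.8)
The plan is to reduce the claim to a ``lifting the exponent'' computation and then to invoke Lemma \ref{element} repeatedly. Recall that $N(l) = \ord_l(q^{l-1}-1)$ and that $q = p^n$ with $n = p^k d_K$ and $\gcd(d_K,p)=1$. Writing $a = p^{l-1}$, we have $q^{l-1} = a^n$, so the task is to compute $\ord_l(a^n - 1)$. The first observation I would record is that, since $l \ne p$, Fermat's little theorem gives $a = p^{l-1} \equiv 1 \pmod{l}$; in particular $\ord_l(a-1) \ge 1$, which is exactly the hypothesis needed to feed into Lemma \ref{element}.

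Next I would separate the exponent $n$ into its $l$-part and its prime-to-$l$ part. Write $n = l^t m$ with $\gcd(m,l)=1$; since $l \ne p$ we have $t = \ord_l(n) = \ord_l(d_K)$, because the factor $p^k$ contributes nothing to the $l$-adic valuation. I would first dispose of the coprime factor $m$: using the factorization $a^m - 1 = (a-1)(a^{m-1} + \dots + 1)$ together with $a \equiv 1 \pmod l$, the second factor is congruent to $m \pmod l$ and hence is a unit at $l$. Therefore $\ord_l(a^m - 1) = \ord_l(a-1) = \ord_l(p^{l-1}-1)$, i.e. passing to the coprime power does not change the valuation.

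Finally I would lift the $l$-part one power at a time. Setting $b = a^m$, Lemma \ref{element} applies (here $l$ is odd, so the side condition $n\ge 2$ is irrelevant) and gives $\ord_l(b^l - 1) = \ord_l(b-1)+1$. Applying this $t$ times to $b, b^l, b^{l^2}, \dots$ yields $\ord_l(a^n - 1) = \ord_l(a^{m l^t}-1) = \ord_l(a^m - 1) + t$. Combining with the two previous steps gives $\ord_l(q^{l-1}-1) = \ord_l(p^{l-1}-1) + \ord_l(d_K)$, as claimed.

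The computation is essentially routine once Lemma \ref{element} is in place, so I do not expect a serious obstacle; the one point that requires care is the treatment of the prime-to-$l$ factor $m$. It is tempting to think the valuation could jump there, and the reason it does not is precisely the congruence $a \equiv 1 \pmod l$ coming from Fermat --- this is what makes the geometric-sum factor a unit at $l$. Keeping track of why $p^k$ drops out of $\ord_l(n)$ (again because $l \ne p$) is the other bookkeeping detail to state explicitly.
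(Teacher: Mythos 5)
Your proof is correct. It rests on the same two pillars as the paper's proof --- Fermat's little theorem to guarantee $\ord_l(p^{l-1}-1)\ge 1$, and Lemma \ref{element} applied once per factor of $l$ in the exponent --- but the packaging is genuinely different. The paper works inside $\mathbb Z_l^{\times}\simeq(\mathbb Z_l)^{\times}_{\tor}\times(1+l\mathbb Z_l)$ and phrases the computation in terms of the filtration $1+l\mathbb Z_l\supset 1+l^2\mathbb Z_l\supset\dots$: raising to a power coprime to $l$ preserves the filtration level, raising to the power $l$ shifts it by exactly one (Lemma \ref{element}), whence $\ord_l(p^{(l-1)d_Kp^k}-1)=\ord_l(p^{l-1}-1)+\ord_l(d_K)$. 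You replace the filtration language with the elementary factorization $a^m-1=(a-1)(a^{m-1}+\dots+1)$ and the observation that the geometric-sum factor is $\equiv m\pmod l$, hence an $l$-adic unit; this buys you a self-contained argument requiring no structure theory of $\mathbb Z_l^{\times}$. It also makes explicit a step the paper glosses over: the paper's proof only asserts that raising to the power $p$ does not move the filtration, which accounts for the factor $p^k$ of $n$, while the prime-to-$l$ part $m$ of $d_K$ requires the same coprime-power invariance --- precisely the step you isolate and prove. The two arguments are equivalent in content, with yours being the more elementary and slightly more complete write-up.
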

\begin{proof}
Recall the isomorphism  $\mathbb Z_l^{\times} \simeq (\mathbb Z_l)^{\times}_{\tor} \times(1+l\mathbb Z_l)$, for $l$ be an odd prime number. The multiplicative group $1+l\mathbb Z_l$ has the following filtration: $$1 \subset 1+l\mathbb Z_l \subset 1+l^2 \mathbb Z_l \subset \dots$$
Given $q$ and $l \ne p$ let $d$ be the order of $q \in \mathbb  (\mathbb Z_l)^{\times}_{\tor}$. Then by our construction $N(l)$ is the greatest integer such that $q^{d} \in 1+l^{N(l)} \mathbb Z_l$. Raising $q$ to the power $p$ doesn't change the filtration. On the other hand, lemma \ref{element} shows that raising $q$ to the power $l$ shifts the filtration exactly at one. Hence for $q=p^{d_K p^{n}}$, $\gcd(d_K, p) = 1$ we have:

$$ N(l) = \ord_l (q^{l-1} -1) = \ord_l (p^{(l-1)d_K p^{k}} -1)= \ord_l (p^{l-1} -1 ) +\ord_l(d_K) $$ 

\end{proof}

Recall that for a prime number $l$ different from $p$ we define $s_l(\mathcal T_q)$ to be the least integer $k$ such that $T_q$ has direct summand of the form $\mathbb Z/ l^{k}\mathbb Z $. Obviously, if $l \ne 2$ then $s_l(T_q) = N(l)$. More generally, we have: 

\begin{lem}\label{dkforml}
 The natural number $d_K$ is a unique number such that for any prime number $l \ne p$: $$\ord_l(d_K) = \begin{cases} 0, & \mbox{if } l=2 \mbox{ and } s_2=1 \\ s_l(\mathcal T_q) - \ord_l( (p^{\star})^{l-1} -1 ), & \mbox{ otherwise.} \end{cases}$$
\end{lem}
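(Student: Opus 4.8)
The plan is to split the argument according to whether $l$ is odd or equal to $2$, and to extract $\ord_l(d_K)$ from the already-computed value of $s_l(\mathcal T_q)$. For odd $l \ne p$ the work is essentially done: the preceding lemma gives $N(l) = \ord_l(p^{l-1}-1) + \ord_l(d_K)$, and since for odd $l$ the group $\mathcal T_q$ has $a_{l,m}$ nonzero exactly when $m \ge N(l)$, the least $m$ with a $\mathbb Z/l^m\mathbb Z$ summand is $s_l(\mathcal T_q) = N(l)$. Because $l-1$ is even and $p^{\star} = \pm p$, we have $(p^{\star})^{l-1} = p^{l-1}$, hence $\ord_l((p^{\star})^{l-1}-1) = \ord_l(p^{l-1}-1)$, and the formula $\ord_l(d_K) = s_l(\mathcal T_q) - \ord_l((p^{\star})^{l-1}-1)$ follows at once; here the ``otherwise'' branch is the only relevant one.

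The substance of the lemma is the prime $l=2$ (so $p$ is odd, as $l \ne p$). First I would record how $q$ reduces modulo $4$: since $n = p^k d_K$ with $p$ odd, the parity of $n$ equals that of $d_K$, so if $p \equiv 1 \bmod 4$ then $q \equiv 1 \bmod 4$ unconditionally, while if $p \equiv 3 \bmod 4$ then $q \equiv 1 \bmod 4$ exactly when $d_K$ is even and $q \equiv 3 \bmod 4$ exactly when $d_K$ is odd. Combined with the structure theorem for $\mathcal T_q$ at $l=2$, this gives $s_2(\mathcal T_q) = N(2)$ in the case $q \equiv 1 \bmod 4$ and $s_2(\mathcal T_q) = 1$ in the case $q \equiv 3 \bmod 4$.

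It then remains to evaluate $N(2) = \ord_2(q-1)$ in the two cases where $q \equiv 1 \bmod 4$. I would do this by the same lifting device as lemma \ref{element}: writing $n = 2^{s}\cdot u$ with $u$ odd and $s = \ord_2(d_K)$, raising $p$ to the odd power $u$ leaves $\ord_2$ unchanged, and each subsequent squaring raises the $2$-adic valuation by exactly $1$ provided it is already $\ge 2$, which is lemma \ref{element} for $l=2$. When $p \equiv 1 \bmod 4$ the starting valuation is $\ord_2(p-1) \ge 2$, so $s_2(\mathcal T_q) = \ord_2(p-1) + s = \ord_2(p^{\star}-1) + \ord_2(d_K)$ since $p^{\star}=p$. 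When $p \equiv 3 \bmod 4$ and $d_K$ is even, one extra step is needed: the first squaring takes valuation $1$ to $1 + \ord_2(p+1) \ge 3$, using $\ord_2(p^u+1) = \ord_2(p+1)$ for odd $u$, and the remaining $s-1$ squarings each add one, giving $s_2(\mathcal T_q) = \ord_2(p+1) + \ord_2(d_K) = \ord_2(p^{\star}-1) + \ord_2(d_K)$ since here $p^{\star}=-p$, so $p^{\star}-1 = -(p+1)$. In both subcases $s_2(\mathcal T_q) \ge 2$, so the ``otherwise'' branch applies and yields the claimed formula; in the remaining subcase $p \equiv 3 \bmod 4$ with $d_K$ odd we have $s_2(\mathcal T_q)=1$ and $\ord_2(d_K)=0$, matching the first branch exactly.

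Finally, for uniqueness I would note that the displayed formula determines $\ord_l(d_K)$ for every prime $l \ne p$, and since $\gcd(d_K,p)=1$ these valuations determine $d_K$ completely. The delicate point, and the only place where genuine care is required, is the $l=2$ analysis: one must track the interaction between $p \bmod 4$, the parity of $d_K$, and the resulting value of $q \bmod 4$, and handle the exceptional behavior of the $2$-adic logarithm, namely the extra $\ord_2(p+1)$ term and the isolated $m=1$ summand when $q \equiv 3 \bmod 4$, which have no analogue for odd $l$.
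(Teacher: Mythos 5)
Your proposal is correct and follows essentially the same route as the paper: the odd $l$ case via the prior lemma $N(l)=\ord_l(p^{l-1}-1)+\ord_l(d_K)$ together with $(p^{\star})^{l-1}=p^{l-1}$, and the case $l=2$ by splitting on $p \bmod 4$ and the parity of $d_K$ (equivalently $q \bmod 4$), computing $\ord_2(q-1)$ by the lifting argument of Lemma \ref{element}. If anything, your write-up is more careful than the paper's at the subcase $p \equiv 1 \bmod 4$, which the paper dismisses as holding ``trivially'' while you actually carry out the valuation computation $\ord_2(q-1)=\ord_2(p-1)+\ord_2(d_K)$.
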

\begin{proof}
The case of the odd $l$ is clear, since $p^{\star} = (-1)^{\frac{p-1}{2}}p$ if $p$ is odd and hence for $l=1\mod 2$ we have $(p^{\star})^{l-1} = p^{l-1}$. If $l=2$ then there are two cases. If $p=1 \mod 4$ then $s_2(T_q) = N(2)$ and obviously $p^{\star} = p$, hence our formula holds trivially. If $p=3 \mod 4$ then either $q=3 \mod 4$ or $q =1 \mod 4$. In the first case we have $d_K = 1 \mod 2$ and $s_2(\mathcal T_q) =1$ which leads to the our "exceptional case". In the second case we have $d_K = 0 \mod 2$ and then $N(2) = s_2(T_q) \ge 2$ and hence $s_2(T_q) = \ord_2(q -1) = \ord_2 (p^{p^{k}d_K} -1 ) = \ord_2( p^{2 \frac{d_K}{2}} -1) = \ord_2 (p^{2} -1 ) + \ord_2(d_K) -1 = \ord_2(p+1) +\ord_2(d_K) = \ord_2(p^{\star} -1) +\ord_2(d_K)$, since in this case $p^{\star} = - p$.   
\end{proof}

Now we are able to prove our main result concerning isomorphism type of the abelian group $\mathcal T_q$. For a prime power $q = p^{n}$ we define $d_q$ to be the non-$p$ part of $n$ : $d_q = \frac{n}{p^{\ord_p{n}}}$. Trivially, for a function field $K$ with the exact constant field $\mathbb F_q$ we have $d_K = d_q$.

\begin{theorem}
Given two powers of $p$: $q_1= p^{n_1}$ and $q_2=p^{n_2}$ groups $\mathcal T_{q_1}$ and $\mathcal T_{q_2}$ are isomorphic if and only if $d_{q_1} = d_{q_2} $.
\end{theorem}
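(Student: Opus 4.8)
The plan is to reduce the group-isomorphism question to a purely combinatorial comparison of the ``supports'' of the two groups and then to read off the answer from the valuation formulas already established. By Theorem \ref{tortheorem} I may write $\mathcal T_{q_i} \simeq \prod_{l,m}(\mathbb Z/l^m\mathbb Z)^{a_{l,m}^{(i)}}$, where each exponent $a_{l,m}^{(i)}$ is either $0$ or a fixed infinite countable cardinal. For such products of finite cyclic groups the number of cyclic summands of each order $l^m$ is an isomorphism invariant, so two of them are isomorphic if and only if their supports $S_i = \{(l,m) : a_{l,m}^{(i)} \ne 0\}$ coincide; since all nonzero exponents equal the same infinite cardinal, matching the groups reduces to matching the sets $S_1$ and $S_2$. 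Because both constant fields have the common characteristic $p$, the $l=p$ slice of each support is empty, so it can be ignored throughout.

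For the forward direction I would simply invoke Lemma \ref{dkforml}. The numbers $s_l(\mathcal T_q) = \min\{m : (l,m) \in S\}$ are read off from the support, hence are isomorphism invariants, and that lemma expresses $\ord_l(d_q)$ for every prime $l \ne p$ purely in terms of the $s_l(\mathcal T_q)$ together with the fixed data $p$ and $p^{\star}$. Consequently $\mathcal T_{q_1} \simeq \mathcal T_{q_2}$ forces $s_l(\mathcal T_{q_1}) = s_l(\mathcal T_{q_2})$ for all $l$, hence $\ord_l(d_{q_1}) = \ord_l(d_{q_2})$ for all primes $l \ne p$; as both $d_{q_i}$ are coprime to $p$, this gives $d_{q_1} = d_{q_2}$.

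For the converse I would assume $d_{q_1} = d_{q_2} =: d$ and show $S_1 = S_2$ slice by slice. For odd $l \ne p$ the slice of $S_i$ is $\{m \ge N_{q_i}(l)\}$ by part (3) of Theorem \ref{tortheorem}, and the formula $N_{q_i}(l) = \ord_l(p^{l-1} - 1) + \ord_l(d)$ shows these agree, since $p$ is common to both. The prime $l=2$ (with $p \ne 2$) is the delicate one: here the slice is governed both by $N_{q_i}(2)$ and by the residue of $q_i$ modulo $4$, and when $q_i \equiv 3 \pmod 4$ the slice has the irregular shape $\{1\} \cup \{m \ge N_{q_i}(2)\}$ with a gap at $m=2$. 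I would first check that the residue mod $4$ depends only on $p$ and $d$: when $p \equiv 1 \pmod 4$ every power $q_i$ is $\equiv 1 \pmod 4$, and when $p \equiv 3 \pmod 4$ one has $q_i \equiv (-1)^{n_i} \pmod 4$ with $n_i = p^{k_i} d$, whose parity equals that of $d$ because $p$ is odd; in both cases the residue agrees for $q_1$ and $q_2$. A short lifting-the-exponent computation (the $l=2$ estimate already used in Lemma \ref{dkforml}, parallel to Lemma \ref{element}) then shows $N_{q_i}(2)$ is itself a function of $p$ and $\ord_2(d)$ alone, so it too agrees. Hence the $l=2$ slices coincide, $S_1 = S_2$, and $\mathcal T_{q_1} \simeq \mathcal T_{q_2}$.

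The main obstacle is concentrated entirely in the prime $2$: one must confirm that both the coarse mod-$4$ dichotomy and the precise value $N_q(2)$ are functions of $(p,d)$ only, so that the gap at $m=2$ appearing when $q \equiv 3 \pmod 4$ is reproduced identically for $q_1$ and $q_2$. The parity argument handles the dichotomy and the lifting-the-exponent estimate handles $N_q(2)$, while the odd primes are routine given the valuation formula. I would finally record the equivalence $d_{q_1} = d_{q_2} \iff n_1/n_2 = p^m$ for some integer $m$, which recovers the statement of part (5) of Theorem \ref{tortheorem}.
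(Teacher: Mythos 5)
Your proposal is correct and follows essentially the same route as the paper's own proof: both reduce the question to comparing the supports $\{(l,m): a_{l,m}\ne 0\}$, deduce $d_{q_1}=d_{q_2}$ from an isomorphism via Lemma \ref{dkforml}, and prove the converse slice by slice using $N(l)=\ord_l(p^{l-1}-1)+\ord_l(d)$ for odd $l\ne p$ together with the mod-$4$ case analysis at $l=2$ (including the gap at $m=2$ when $q\equiv 3 \bmod 4$). The only cosmetic difference is that you spell out explicitly the reduction to supports and the fact that $q \bmod 4$ is determined by $p$ and the parity of $d$, points the paper leaves implicit.
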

\begin{proof}
The only invariants of $T_q$ are the sequence of coefficients $a_{l, m}$ for different $l$, $m$. We will show that they coincide for all $l$, $m$ if and only if the condition of the our theorem holds. 

First we will prove the if part. We know that $d_{q_1} = d_{q_2}$. Let $l$ be an odd prime number different from $p$, then by the formula from the above lemma $s_l(\mathcal T_{q_1}) = s_l(\mathcal T_{q_2})$ and we have $a_{l, m} =0$ if and only if $m < s_l(\mathcal T_{q_1})$ which shows that coefficients $a_{m,l}$ coincide for $\mathcal T_{q_1}$ and $\mathcal T_{q_2}$. Suppose that $l=2$. If $p=2$ then $a_{2, m} =0$ for all $m$ in both groups. If $p = 1 \mod 4$ or $d_{q_1} = 0 \mod 2 $ then as before $a_{2,m} = 0$ if and only if $m < N_2(l) = s_2(T_{q_1}) = \ord_2(d_{q_1}) + \ord_2(p-1)$ and hence $a_{2,m}$ coincide for both groups. Finally, if $p=3\mod 4$ and $d_{q_1} = d_{q_2} = 1 \mod 2$ then $a_{2,m} =0$ if and only if either $m=1$ or  $m>N(2)=\ord_2(q_1^2-1) = \ord_2(q_2^2-1) $. The equality $\ord_2(q_1^2-1) = \ord_2(q_2^2-1)$ holds since: $\ord_2(q_1^{2} -1) = \ord_2(q_1 + 1) + 1 = \ord_2(p^{d_{q_1}p^{k}} +1 ) + 1 = \ord_2(p+1) + 1$.

Now, suppose that $T_{q_1} \simeq T_{q_2}$. Then by the formula from lemma \ref{dkforml} for any odd prime number $l$ different from $p$ we have $\ord_l(d_{q_1}) = \ord_l(d_{q_2})$. By definition we have $\ord_p(d_{q_1}) = \ord_p(d_{q_2}) = 0$. Finally, for $l=2$ there are two cases. Either both groups contain direct summand of the form $\mathbb Z/ 2\mathbb Z$ and then $\ord_2(d_{q_1}) =\ord_2(d_{q_2})=0$, or otherwise the formula from lemma \ref{dkforml} holds and then $\ord_2(d_{q_1}) = \ord_2(d_{q_2})$. 
\end{proof}

This already gives some important corollary. Let $q = 2^{2^{k}}$ for some non-negative integer $k$, then coefficients $a_{l, m}$ defined as follows: $$a_{l, m} = \begin{cases} \mathbb N, & \mbox{if } l \ne 2 \mbox{ and } m\ge \ord_l( 2^{l-1} -1 ) \\ 0  , & \mbox{ otherwise.} \end{cases}$$

\begin{cor}\label{trivial}
The following function fields of characteristic two share the same abelianized absolute Galois group $\mathcal G^{ab}_{K} \simeq \prod_{l, m} (\mathbb Z/ l^{m} \mathbb Z)^{a_{l,m}} \times \prod_{\mathbb N} \mathbb Z_2 \oplus \widehat{\mathbb Z}$ :
\begin{enumerate}
  	\item The rational function field  with $g=0$ over $\mathbb F_{2^{2^k}}$, for any non-zero integer $k$;
	\item The elliptic function field $y^2+y= x^3+x+1$, with $g=1$ over $\mathbb F_2$;
	\item The hyper elliptic function field $y^2+y= x^5+x^3 +1$, with $g=2$ over $\mathbb F_2$;
	\item The hyper elliptic function filed  $y^2 + y = (x^3 + x^2 + 1)(x^3 + x + 1)^{-1}$ ,  with $g=2$ over $\mathbb F_2$; 
	\item The function field of the plane quartic $y^4 + (x^3+x+1)y+(x^4+x+1)=0$, with $g=3$ over $\mathbb F_2$.
	\item The elliptic function field  $y^2+y=x^3 + \mu$, with $g=1$ over $\mathbb F_4$, where $\mu$ is the generator of $\mathbb F^{\times}_4$.
	
 \end{enumerate} 
In particular, the genus, the constant field and the zeta-function of $K$ are not determined by $G^{ab}_{K}$.  	   
\end{cor}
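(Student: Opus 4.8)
The plan is to reduce the entire statement to Theorem \ref{main}, so that it suffices to verify that the three invariants $p$, $d_K$ and $\CL^{0}_{non-2}(K)$ agree across all six fields. The first two are immediate. Every field listed has characteristic $2$, settling condition (1). For condition (2), recall that $d_K=d_q$ is the prime-to-$p$ part of $n$ when the exact constant field is $\mathbb F_{2^n}$: fields (2)--(5) have $n=1$, field (6) has $n=2$, and field (1) has $n=2^{k}$, so in every case $n$ is a power of $2$ and hence $d_K=1$. By part (5) of Theorem \ref{tortheorem} this already forces the groups $\mathcal T_q$ to be mutually isomorphic, with coefficients $a_{l,m}$ as in the displayed formula for $q=2^{2^k}$.

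The substantive point is condition (3): I claim $\CL^{0}_{non-2}(K)=0$ for each field, i.e. the order of the group of $\mathbb F_q$-rational points of the Jacobian is a power of $2$. For the rational field (1) this is trivial, since the Jacobian is $0$. For each remaining curve I would compute the numerator $P(t)=\prod_{i=1}^{2g}(1-\alpha_i t)$ of the zeta function and evaluate $\#\Jac(\mathbb F_q)=P(1)$. Since the $g$ counts $N_1,\dots,N_g$ of rational points over $\mathbb F_q,\dots,\mathbb F_{q^{g}}$ determine $P(t)$ via the functional equation, the computation amounts to counting affine solutions of each Artin--Schreier or quartic model together with the points at infinity. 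For the elliptic fields (2),(6) a direct count leaves only the point at infinity, whence $\#\Jac=1$; for the genus-$2$ fields (3),(4) one counts over $\mathbb F_2$ and $\mathbb F_4$, and for the genus-$3$ quartic (5) over $\mathbb F_2,\mathbb F_4,\mathbb F_8$. In each case $P(1)$ turns out to be a power of $2$, so the odd part of the class group vanishes.

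With all three invariants matched, Theorem \ref{main} gives $\mathcal G^{ab}_K\simeq \mathcal G^{ab}_{K'}$ for any two fields on the list. To obtain the explicit shape I would split the sequence \ref{es} into primary components. For $l\neq 2$ the vanishing $\CL^{0}_{l}(K)=0$ collapses it to $\C^{0}_{K,l}\simeq \mathcal T_{q,l}$, while for $l=p=2$ Lemma \ref{ZpLem} yields $\C^{0}_{K,2}\simeq \mathbb Z_2^{\infty}$ independently of the (possibly nontrivial, but $2$-primary) group $\CL^{0}_{2}(K)$. Hence $\C^{0}_K\simeq \mathcal T_q\times \mathbb Z_2^{\infty}$, and combining $\mathcal G^{ab}_K\simeq \C^{0}_K\oplus\widehat{\mathbb Z}$ with the description of $\mathcal T_q$ from Theorem \ref{tortheorem} reproduces $\mathcal G^{ab}_K\simeq \prod_{l,m}(\mathbb Z/l^{m}\mathbb Z)^{a_{l,m}}\times\prod_{\mathbb N}\mathbb Z_2\oplus\widehat{\mathbb Z}$. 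The closing assertion follows because the six fields carry genera $0,1,2,2,3,1$ and constant fields of differing cardinality yet share a single $\mathcal G^{ab}_K$.

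The only real obstacle is computational: confirming that each $P(1)$ is a power of $2$. This is delicate for fields (4) and (5), where the poles of the rational Artin--Schreier model and the geometry of the quartic make the count of points at infinity and the higher counts $N_2,N_3$ error-prone; the curves are plainly hand-selected so that $P(1)$ comes out $2$-primary, and the corollary rests on this numerical fact. Everything else is a formal consequence of Theorems \ref{main}, \ref{tortheorem} and Lemma \ref{ZpLem}.
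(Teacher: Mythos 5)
Your reduction is sound and, at the level of strategy, is the same as the paper's: match the three invariants of Theorem \ref{main} (or equivalently pin down $\C^{0}_K$ via the sequence \ref{es}) and observe that $d_K=1$ for every field on the list so that all the $\mathcal T_q$ coincide. The difference is in how condition (3) is handled. The paper's entire proof is one line: the listed fields are precisely the known function fields of characteristic $2$ with \emph{divisor class number one}, i.e.\ $\CL^{0}(K)$ is trivial outright (this is the classification cited as \cite{Func1}), whence the sequence \ref{es} collapses immediately to $\C^{0}_K \simeq \mathcal T_2 \times \prod_{\mathbb N}\mathbb Z_2$ with no further machinery. You instead aim only at the weaker statement $\CL^{0}_{non-2}(K)=0$ and then invoke Lemma \ref{ZpLem} (via total non-splitness) to absorb a possibly nontrivial $2$-part. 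That variant is legitimate and in fact slightly more general --- it would apply verbatim to any function field whose class number is a power of $2$, not just to class-number-one fields --- and it is exactly the mechanism the paper uses in its proof of the converse direction of Theorem \ref{main}.

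The gap is that your proposal never actually establishes the numerical fact it rests on. You state ``in each case $P(1)$ turns out to be a power of $2$'' but defer the point counts, and you yourself flag the counts for fields (4) and (5) as delicate and error-prone; a proof cannot rest on an unverified claim that the examples were ``hand-selected so that $P(1)$ comes out $2$-primary.'' The missing idea is precisely the recognition that this list is not ad hoc: it is the classification of class-number-one function fields in characteristic $2$, so $\#\Jac(\mathbb F_q)=1$ (not merely a $2$-power) for every non-rational field listed, and this can be quoted from the literature rather than recomputed. To make your write-up complete you must either supply the citation, or actually carry out the zeta-function computations you outline (your counts for the elliptic fields (2) and (6), giving $N_1=1$, are correct; the genus $2$ and $3$ cases remain to be done).
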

\begin{proof}
All these fields have trivial $\CL^{0}(K)$, see \cite{Func1}. Since $\mathcal T_2 \simeq \mathcal T_{2^{2^k}}$ we have that for any $K$ listed above $\C^{0}_K \simeq \mathcal T_2 \times \prod_{\mathbb N} \mathbb Z_2$.  
\end{proof}

\begin{bf}Remark: \end{bf} For given $q$ we will call a prime $l$ \emph{exceptional} if $N(l) > 1$. The question which $l$ are exceptional seems to be very difficult. Of course, if $l^2 | (q-1) $, then $a_{l, 1} =0$. For example if $q=9$ then $a_{2,1} = a_{2, 2} = 0$. But also there are exceptional primes $l$ with $\gcd(l , q-1)=1$. For example if $p=q=7$ and $l=5$. Then if $7^{d} = 1 \mod 5$ if and only if $d=4k$, but then $7^{d} =  49^{2k} = (-1)^{2k} = 1 \mod 25$. It means that $5$ is exceptional. We expect that for a given $q$ there are infinitely many exceptional primes, but we have no idea how to prove it even for the case $q=2$: the first exceptional prime for this case is $1093$. This phenomena is strictly related with the so-called \emph{Wieferich primes}.

\subsection{On the torsion of $\C^{0}_K$}\label{torsion}
Now our goal is to understand what happens with the exact sequence $1\to \mathcal T_q \times \mathbb Z_p^{\infty} \to \C^{0}_K \to \CL^{0}(K) \to 1$, when $ \CL^{0}(K)$ is not trivial. Since we are working with infinite groups $\C^{0}_K$ could still be isomorphic to the group mentioned in the corollary \ref{trivial}.  

\begin{theorem}
All the torsion of $\C^{0}_K$ are in $\mathcal T_q$, hence the exact sequence \ref{es} is totally non-split. Moreover, the topological closure of the torsion subgroup of $\C^{0}_K$ is   $\mathcal T_q$ : $\overline{\C^{0}_K [\tor]} = \mathcal T_q$. 
\end{theorem}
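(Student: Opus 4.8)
The plan is to reduce everything to the single inclusion $\C^0_K[\tor] \subseteq \mathcal T_q$; the two stated consequences (total non-splitness and the closure formula) then follow formally. Since $\C^0_K$ is profinite abelian it is the product of its $l$-primary components, and every torsion element is a finite sum of $l$-primary torsion elements, so it suffices to treat a torsion element $x$ of $l$-power order prime by prime. In every case I represent $x$ by a degree-zero idele $a=(a_v)_v \in \I^0_K$ with $a^{N}=f\in K^{\times}$ for $N=l^{k}$; taking valuations componentwise shows that $f$ is an $N$-th power in each completion $K_v$, so the whole question becomes a local-to-global principle for $N$-th powers. The primes $l=p$ and $l\ne p$ need genuinely different tools.

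For $l=p$ I would argue with differentials, which sidesteps Lemma \ref{ZpLem} (that lemma is proved later \emph{using} the present theorem, so I must not use it here). In characteristic $p$ the map $t\mapsto t^{p^{k}}$ is injective, so $a_v=f^{1/p^{k}}$ is the unique root and the hypothesis says exactly $f\in K_v^{p^{k}}$ for all $v$. Using that $\ker\bigl(d\colon K\to\Omega_{K/\mathbb F_q}\bigr)=K^{p}$ (and likewise locally), together with the fact that a nonzero rational differential has nonzero image in $\Omega_{K_v/\mathbb F_{q^{\deg v}}}$ for almost all $v$, the condition $f\in K_v^{p}$ at every place forces $df=0$, i.e. $f\in K^{p}$; iterating gives $f=g^{p^{k}}$ with $g\in K^{\times}$. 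Then $(a_v/g)^{p^{k}}=1$ forces $a_v=g$ for every $v$, so $a$ is the diagonal idele of $g$ and $x=0$. Thus $\C^0_K$ has no $p$-torsion, matching the vanishing of the $p$-part of $\mathcal T_q$.

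For $l\ne p$ the equation $X^{N}-f$ is separable and I would invoke the local-to-global principle for $N$-th powers. When $\mu_{N}\subseteq K$ this is immediate from Chebotarev: $f$ being a local $N$-th power everywhere means every place splits completely in the Kummer extension $K(f^{1/N})/K$, and an everywhere-split separable extension is trivial, whence $f\in(K^{\times})^{N}$. In general one descends through $K'=K(\mu_N)$, which for a function field is the constant field extension $K\cdot\mathbb F_{q^{d}}$ and hence \emph{cyclic} over $K$; this is precisely why the Grunwald--Wang special case (which requires a non-cyclic cyclotomic extension) cannot arise in positive characteristic, so the principle holds without exception. I expect this descent --- controlling the passage from $(K'^{\times})^{N}$ back to $(K^{\times})^{N}$ and verifying the special case is genuinely absent --- to be the main obstacle. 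Granting it, $f=g^{N}$ with $g\in K^{\times}$, and then $a_v/g\in\mu_{N}(K_v)\subseteq\mathbb F_{q^{\deg v}}^{\times}$, so $a\equiv(a_v/g)_v$ modulo $K^{\times}$ lies in the image of $\prod_v\mathbb F_{q^{\deg v}}^{\times}$, i.e. $x\in\mathcal T_q$.

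Finally I would harvest the remaining two claims. For total non-splitness: if a nontrivial $S\le\CL^0(K)$ admitted a splitting, a section would embed $S$ as a finite, hence torsion, subgroup of $\C^0_K$ mapping isomorphically onto $S$; but $\C^0_K[\tor]\subseteq\mathcal T_q=\ker\bigl(\C^0_K\to\CL^0(K)\bigr)$ forces that image into the kernel, contradicting $S\ne 0$. For the closure, the inclusion just proved together with the trivial reverse inclusion $\mathcal T_q[\tor]\subseteq\C^0_K[\tor]$ yields $\C^0_K[\tor]=\mathcal T_q[\tor]$; since $\mathcal T_q$ is a compact (hence closed) product of finite cyclic groups in which the finite-support elements are dense and of finite order, $\overline{\C^0_K[\tor]}=\overline{\mathcal T_q[\tor]}=\mathcal T_q$.
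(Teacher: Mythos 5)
Your overall skeleton is the same as the paper's: lift a torsion class to an idele, observe that its $N$-th power is a principal idele which is locally an $N$-th power at every place, invoke a local-to-global principle for $N$-th powers to write it as $g^{N}$ with $g\in K^{\times}$, conclude that the class lies in the image of $\prod_{v}\mathbb F_{q^{\deg v}}^{\times}$, i.e.\ in $\mathcal T_q$, and then deduce total non-splitness and the closure statement formally (your last paragraph is essentially the paper's). The differences are these. The paper handles every prime $l$, including $l=p$, in one stroke by citing [Theorem 1, Chapter 9] of \cite{ArtinTate}; you instead give a self-contained differential argument for $l=p$ (kernel of $d$ equals $K^{p}$, injectivity of Frobenius), which is correct and a nice way to avoid the citation in that case. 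You also work with elements of prime-power order $l^{k}$, where the paper's write-up only treats $x^{l}=0$ for $l$ prime; this is a point in your favor, since knowing that prime-order elements lie in a subgroup does not formally imply that all torsion does (think of the kernel of $\mathbb Z/4\mathbb Z \to \mathbb Z/2\mathbb Z$), so the exponent-$l^{k}$ version of the local-to-global principle is genuinely what is needed.

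The one place your proof is incomplete is exactly the place where the paper cites the literature: for $l\ne p$ you prove the principle only when $\mu_{N}\subseteq K$ (the Chebotarev/Kummer argument is fine) and then say ``granting'' the descent from $K'=K(\mu_{N})$ to $K$. Be aware that this descent is not a formality: the naive statement $K^{\times}\cap (K'^{\times})^{N}=(K^{\times})^{N}$ is \emph{false} in general --- for instance $-1\in\mathbb F_3(x)$ is a fourth power in $\mathbb F_9(x)=\mathbb F_3(x)(\mu_4)$, because $(\mathbb F_9^{\times})^{4}=\{\pm1\}$, yet it is not a fourth power (not even a square) in $\mathbb F_3(x)$. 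So the descent must re-use the local hypotheses over $K$, and that re-use is the actual content of the Grunwald--Wang argument (an induction on the exponent, as in Chapter 9 of \cite{ArtinTate}). Your structural observation that the special case cannot occur because $K(\mu_{2^{s}})/K$ is a constant-field extension and hence cyclic identifies the right reason, but rather than reprove the theorem you should simply cite [Theorem 1, Chapter 9] of \cite{ArtinTate}, exactly as the paper does; with that citation in place of ``granting it,'' your argument is complete. (One harmless slip: you wrote $\mathcal T_q=\ker\bigl(\C^{0}_K\to\CL^{0}(K)\bigr)$, whereas the kernel is $\mathcal T_q\times\mathbb Z_p^{\infty}$; your non-splitness argument only needs the inclusion $\mathcal T_q\subseteq\ker$, so nothing breaks.)
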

 
\begin{proof}
Suppose that there exists a non-zero $x \in \C^{0}_K$ such that $x^{l} = 0$ for some prime number $l$, not necessarily co-prime to $p$. We would like to show that actually this element has trivial image in the class group. Pick a representative $(x_1 , x_2 , \dots )$ for $x$ as element of $\mathcal I_K$ , we know that almost all $x_i \in \mathcal O^{\times}_v$ and that $x^l = (x_1^{l}, x_2^{l} , \dots)$ is a principal Idele. Let $a$ be the element of $K^{\times}$ whose image in $\mathcal I_K$ is $x^{l}$. We have that $a$ is locally an $l$-th power and hence by [theorem 1, chapter 9] from \cite{ArtinTate} we have that $a$ is globally an $l$-th power and hence $x$ is a principal Idele up to multiplication by the element $(\zeta_1, \zeta_2, \dots ) \in \mathcal T_q$, where $\zeta_i$ denotes an $l$-th root of unity and hence its image in the class group is trivial. 

Since $\mathbb Z_p$ is torsion free we have that all the torsion of $\C^{0}_K$ lies in $\mathcal T_q$. Note that each element of finite order in $\mathcal T_q$ is an element of the direct sum $\oplus_{l, m} (\mathbb Z/ l^{m} \mathbb Z)^{a_{l,m}} $ and closure of this direct sum is $T_q$ itself. 
\end{proof}

As it was mentioned in the introduction this statement implies the "only if" part of our main result. 

\subsection{Proof of the inverse implication}

Our task in this section is for given $K$ show that the data $\CL^{0}_{non-p} (K) , \mathcal T_{ q}$ determines $\C^{0}_K$ up to isomorphism. 
\subsubsection{The $p$-part}
Our first goal is to show that the $p$-part of $\C^{0}_K$ is isomorphic to $\mathbb Z_p^{\infty}$.

We start from an easy example. Consider the exact sequence: $0 \to \mathbb Z_p \to  \mathbb Z_p \to \mathbb Z/ p^{k}\mathbb Z \to 0 $, where the second map is multiplication by $p^{k}$. This sequence is totally non-split. We claim that $\mathbb Z_p$ is a unique group which could occur in the middle of this sequence. More concretely:

\begin{ex}
Let $A$ be an abelian pro-$p$ group such that the following sequence is totally non-split: $0 \to \mathbb Z_p \to  A \to \mathbb Z/ p^{k}\mathbb Z \to 0 $, then $A \simeq \mathbb Z_p$.
\end{ex}
\begin{proof}
Since $\mathbb Z_p$ is torsion free and the sequence is totally non-split then $A$ is also torsion free. Let us denote the quotient map by $\phi$. There exists $x \in A$ such that $\phi(x)$ is the generator of $\mathbb Z/ p^{k}\mathbb Z$. Moreover,  since $A$ is torsion free we know that $p^{k}x $ is a non-zero element $a=\phi(x)$ of $\mathbb Z_p$. We claim that the first non-zero coefficient in the $p$-adic expression $a=a_0 +a_1p +a_2p^2 + \dots$ is $a_0$. Indeed, if $a$ is divisible by $p$ then $p(x - a/p) = 0$ and hence $x=a/2$ since $A$ is torsion free. Then $A$ is generated by $\{x, \mathbb Z_p \}$ with the relation $p^{k}x =a$. Consider the map $\psi : A \to \mathbb Z_p$, which sends element $x$ to $a$ and $\mathbb Z_p \to p^{k}\mathbb Z_p$. Then $\psi$ is homomorphism: $ \psi (p^{k}x) = \psi(a) = p^ka = p^k \psi(x)$. The kernel of this map is trivial and since $a_0 \ne 0$ then this map is onto.  
\end{proof}

This example gives an idea how to prove the following:

\begin{lem}\label{ZpLem}
Let $A$ be an abelian pro-$p$ group such that the following sequence is totally non-split: $0 \to \mathbb Z_p^{\infty} \to  A \to B \to 0 $, where $B$ is a finite abelian $p$-group. Then $A \simeq \mathbb Z_p^{\infty}$.
\end{lem}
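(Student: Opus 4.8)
The plan is to deduce the result from Pontryagin duality, with total non-splitting entering only through the torsion-freeness of $A$; everything else is a duality computation.

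First I would show that $A$ is torsion-free. Write $\psi$ for the surjection $A \to B$, and suppose $x \in A$ is a non-trivial element of finite order $p^j$. Since $\ker \psi = \mathbb Z_p^{\infty}$ is torsion-free, $\psi(x)$ cannot be trivial, so $S := \langle \psi(x)\rangle$ is a non-trivial cyclic subgroup of $B$. Moreover the orders of $x$ and $\psi(x)$ agree: if $p^m$ is the order of $\psi(x)$, then $\psi(x^{p^m}) = 0$, so $x^{p^m} \in \mathbb Z_p^{\infty}$, and being torsion it must vanish; hence $p^j \mid p^m$, while trivially $p^m \mid p^j$. Consequently $s(\psi(x)^i) := x^i$ is a well-defined splitting of $0 \to \mathbb Z_p^{\infty} \to \psi^{-1}(S) \to S \to 0$, contradicting total non-splitting. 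Therefore $A[\tor] = 0$.

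Next I would dualize. Applying the Pontryagin duality to $0 \to \mathbb Z_p^{\infty} \to A \to B \to 0$ gives an exact sequence of discrete $p$-primary groups $0 \to B^{\vee} \to A^{\vee} \to (\mathbb Z_p^{\infty})^{\vee} \to 0$, where $B^{\vee} \simeq B$ is finite and, since duality sends products to direct sums and $\mathbb Z_p$ to $Z(p^{\infty})$, the quotient is $(\mathbb Z_p^{\infty})^{\vee} \simeq \bigoplus_{\mathbb N} Z(p^{\infty})$. Because multiplication by $n$ on $A$ dualizes to multiplication by $n$ on $A^{\vee}$, torsion-freeness of $A$ is equivalent to divisibility of $A^{\vee}$; thus $A^{\vee}$ is a divisible $p$-primary group, and by Lemma \ref{Reduced} it is a direct sum $A^{\vee} \simeq \bigoplus_{I} Z(p^{\infty})$ for some index set $I$ (no copies of $\mathbb Q$ occur, as $A^{\vee}$ is torsion). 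To pin down $|I|$ I would read off the $p$-torsion: the snake lemma for multiplication by $p$ applied to $0 \to B \to A^{\vee} \to \bigoplus_{\mathbb N} Z(p^{\infty}) \to 0$, together with $A^{\vee}/pA^{\vee} = 0$ and the same for the quotient (both divisible), yields $\dim_{\mathbb F_p} A^{\vee}[p] = \dim_{\mathbb F_p}\big(\bigoplus_{\mathbb N} Z(p^{\infty})\big)[p] = \aleph_0$, the finite contributions $\dim_{\mathbb F_p} B[p] = \dim_{\mathbb F_p} B/pB$ cancelling. Hence $|I| = \aleph_0$, so $A^{\vee} \simeq \bigoplus_{\mathbb N} Z(p^{\infty}) \simeq (\mathbb Z_p^{\infty})^{\vee}$, and dualizing once more gives $A \simeq \mathbb Z_p^{\infty}$.

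The main obstacle is the torsion-freeness step: one must exhibit an honest splitting from a torsion element over the cyclic subgroup it generates, which is precisely where the order-matching forced by torsion-freeness of the kernel is used. The remaining work — the equivalence between torsion-freeness of a pro-$p$ group and divisibility of its dual, and the cancellation of the finite terms in the computation of $\dim_{\mathbb F_p} A^{\vee}[p]$ — is routine, but the cardinality count should be stated carefully so that it is unambiguous that the rank is exactly $\aleph_0$.
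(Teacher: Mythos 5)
Your proof is correct and follows essentially the same route as the paper: deduce torsion-freeness of $A$ from total non-splitting, pass to the Pontryagin dual, and invoke the structure theorem for divisible groups to identify $A^{\vee}$ as a sum of copies of $Z(p^{\infty})$. In fact you are more careful than the paper at the two points it leaves implicit, namely the explicit splitting over $\langle \psi(x)\rangle$ that rules out torsion elements, and the $\mathbb F_p$-dimension count via the snake lemma showing that the number of copies of $Z(p^{\infty})$ is exactly $\aleph_0$.
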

\begin{proof}
Since the sequence is totally non-split and $\mathbb Z_p$ is torsion free, then $A$ is torsion free also. This means that multiplication by any natural number is injective. It means that the Pontryagin dual $A^{\vee}$ of $A$ is torsion(since $A$ is pro-finite) and divisible(since the dual to the injection is surjection). Consider the dual sequence: $0 \to B^{\vee} \to A^{\vee} \to \oplus \mathbb Z(p^{\infty}) \to 0 $. By the structure theorem of divisible groups $ A^{\vee}$ is isomorphic to the direct sum of copies of $ \mathbb Z(p^{\infty}) $ and $\mathbb Q$. But $A^{\vee}$ is torsion and hence $A \simeq \mathbb Z_p^{\infty}$.
\end{proof}
This shows that $\C^{0}_{K, p}$ depends only on $p$ and since isomorphic $T_q$ share the same $p$ we conclude that $\C^{0}_{K, p}$ is determined by the our data. 

\subsubsection{The non $p$-part}

Now we pick the prime number $l \ne p$ and consider the $l$-part $\C^{0}_{K, l}$ of $\C^{0}_{K}$. If $l$ is such that $\CL^{0}_{l} (K) \simeq \{0 \}$ then obviously $\mathcal T_{q,l} \simeq \C^{0}_{K,l}$. Let $l$ be a prime such that $\CL^{0}_{l} (K)$ is not trivial.  We know that the following sequence is totally non-split: 

$$1\to \mathcal T_{q,l} \to \C^{0}_{K, l}  \to \CL^{0}_{l} (K) \to 1 .$$

Fix a natural number $n$. Then multiplication by $l^n$ map induces the following commutative diagram:

\[\xymatrix{
1 \ar[r] & \mathcal T_{q,l}[l^{n}] \ar@{^(->}[d] \ar@{^(->>}[r]         & \C^{0}_{K, l}[l^{n}]\ar@{^(->}[d]\ar[r]^{0}  			&\CL^{0}_{l} (K)[l^n]\ar@{^(->}[d]  &\\
1 \ar[r] & \mathcal T_{q,l}\ar[d]^{l^n} \ar[r]          &\C^{0}_{K, l}\ar[d]^{l^n} \ar[r]    			        &\CL^{0}_{l} (K)\ar[d]^{l^n} \ar[r] &1 \\ 
1 \ar[r] & \mathcal T_{q,l}\ar[r]\ar@{->>}[d]                   & \C^{0}_{K, l} \ar[r] \ar@{->>}[d]			        & \CL^{0}_{l} (K)\ar[r] \ar@{->>}[d]  &1\\
 & \mathcal T_{q,l}/l^n\mathcal T_{q,l}\ar[r] & \C^{0}_{K, l}/l^n \C^{0}_{K, l}\ar[r]        &\CL^{0}_{l} (K)/l^n \CL^{0}_{l} (K)\ar[r] &1\\
}\]

Since our main sequence is totally non-split the map from $\C^{0}_{K, l}[l^{n}]$ to $\CL^{0}_{l} (K)[l^n]$ is the zero map and the map from $T_{q,l}[l^{n}]$ to $ \C^{0}_{K, l}[l^{n}]$ is isomorphism. Now applying the Pontryagin duality to the above diagram we get: 

\[\xymatrix{
1  & (\mathcal T_{q,l}[l^{n}] )^{\vee} \ar[l]          & (\C^{0}_{K, l}[l^{n}] )^{\vee}\ar@{^(->>}[l]   			&(\CL^{0}_{l} (K)[l^n])^{\vee} \ar[l]^{0} &  \\
1  & (\mathcal T_{q,l})^{\vee}\ar@{->>}[u] \ar[l]          &(\C^{0}_{K, l})^{\vee}\ar@{->>}[u] \ar[l]    			        &(\CL^{0}_{l} (K))^{\vee}\ar@{->>}[u] \ar[l] &1\ar[l] \\ 
1  & (\mathcal T_{q,l})^{\vee}\ar[l]\ar[u]^{l^n}                   & (\C^{0}_{K, l})^{\vee} \ar[u]^{l^n} \ar[l]			        & (\CL^{0}_{l} (K))^{\vee}\ar[l] \ar[u]^{l^n}  &1\ar[l]\\
  & (\mathcal T_{q,l}/l^n\mathcal T_{q,l})^{\vee} \ar@{^(->}[u] & (\C^{0}_{K, l}/l^n \C^{0}_{K, l})^{\vee}\ar@{^(->}[u] \ar[l]        &(\CL^{0}_{l} (K)/l^n \CL^{0}_{l} (K))^{\vee}\ar@{^(->}[u]\ar[l] &1\ar[l]\\
}\]

Because of the construction of $\mathcal T_{q,l}$ the group $(\mathcal T_{q,l})^{\vee}$ is isomorphic to the direct sum of cyclic groups $(\mathcal T_{q,l})^{\vee} \simeq \oplus_{k \ge N(l)} \oplus_{\mathbb N} \mathbb Z/l^{k}\mathbb Z$ and therefore $\cap_n l^{n} (\mathcal T_{q,l})^{\vee} = \{0 \}$. It means we have $(\cap_n l^{n} (\C^{0}_{K, l})^{\vee}) \subset (\CL^{0}_{l} (K))^{\vee}$. Our goal is to show that $(\cap_n l^{n} (\C^{0}_{K, l})^{\vee}) = (\CL^{0}_{l} (K))^{\vee}$.
\begin{lem}
Given any non-zero element $x$ of $ (\CL^{0}_{l} (K))^{\vee} \subset (\C^{0}_{K, l})^{\vee}$ and any natural number $n$ there exists element $c_x \in (\C^{0}_{K, l})^{\vee}$ such that $l^n c_x = x$. 
\end{lem}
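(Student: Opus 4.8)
The plan is to translate the divisibility statement into a vanishing condition via Pontryagin duality and then feed in the fact, already established, that all torsion of $\C^0_K$ lies inside $\mathcal T_q$. Write $C = \C^0_{K,l}$, $T = \mathcal T_{q,l}$ and $L = \CL^0_l(K)$, so that the inclusion $L^{\vee} \subset C^{\vee}$ is dual to the surjection $C \to L$. Concretely, an element $x \in L^{\vee} \subset C^{\vee}$ is a character $\chi \colon C \to \T$ pulled back from a character of $L$; in particular $\chi$ is trivial on $\ker(C \to L) = T$. Proving the lemma amounts to showing $x \in l^n C^{\vee}$ for every $n$, i.e. that $x \in \cap_n l^n C^{\vee}$, which together with the already-noted inclusion $\cap_n l^n C^{\vee} \subset L^{\vee}$ yields the desired equality $\cap_n l^n C^{\vee} = L^{\vee}$.

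First I would record a duality dictionary for divisibility. Dualizing the short exact sequence $0 \to l^n C \to C \to C/l^n C \to 0$ (all maps continuous, and $l^n C$ closed since $C$ is compact), and using that restriction of characters $C^{\vee} \to (C[l^n])^{\vee}$ is surjective with kernel the annihilator of $C[l^n]$, one checks directly that
$$ l^n C^{\vee} = \{\, \chi \in C^{\vee} : \chi|_{C[l^n]} = 0 \,\}. $$
Indeed, if $\chi = l^n \eta$ then $\chi$ kills $C[l^n]$; conversely, if $\chi$ kills $C[l^n]$ it factors through $C/C[l^n] \simeq l^n C$, and extending the resulting character from the closed subgroup $l^n C$ to all of $C$ exhibits $\chi$ as $l^n$ times a character. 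Thus membership of $x$ in $l^n C^{\vee}$ is equivalent to $\chi$ vanishing on the $l^n$-torsion subgroup $C[l^n]$.

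The second ingredient is the location of this torsion. By the theorem that all torsion of $\C^0_K$ lies in $\mathcal T_q$ — equivalently, by the commutative diagram above, in which the map $C[l^n] \to L[l^n]$ is zero and $T[l^n] \to C[l^n]$ is an isomorphism — we have $C[l^n] = T[l^n] \subset T$. Since $\chi$ is trivial on all of $T$, it is in particular trivial on $C[l^n]$, and by the previous paragraph $x = \chi \in l^n C^{\vee}$. As $n$ was arbitrary, a preimage $c_x$ with $l^n c_x = x$ exists for every $n$, which is exactly the assertion of the lemma.

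The only genuinely delicate point is the duality dictionary of the second paragraph: one must ensure that $l^n C$ is closed, so that $C/l^n C$ is again a topological group and the sequences dualize cleanly, and that characters extend from closed subgroups — both of which hold because $C$ is pro-finite and $\T$ is divisible. Everything else is a direct consequence of the totally non-split hypothesis, which has already been converted into the containment $C[l^n] \subset T$.
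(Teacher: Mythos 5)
Your proof is correct and follows essentially the same route as the paper: both arguments reduce the claim to the two facts that $\C^{0}_{K,l}[l^n]$ lies inside $\mathcal T_{q,l}$ (the totally non-split/torsion theorem) and that a character killing $\C^{0}_{K,l}[l^n]$ lies in $l^n(\C^{0}_{K,l})^{\vee}$. The only difference is presentational: the paper extracts the second fact from exactness of the dualized diagram (second column), whereas you prove the annihilator identity $l^n C^{\vee} = \{\chi : \chi|_{C[l^n]}=0\}$ directly via factoring through $C/C[l^n]\simeq l^nC$ and extending characters from a closed subgroup.
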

\begin{proof}
For fixed $n$ consider the above diagram. Since the second row is exact the image of $x$ in $(\mathcal T_{q,l})^{\vee}$ is zero. Then its image in $(\mathcal T_{q,l}[l^{n}] )^{\vee}$ is also zero. Since $(\mathcal T_{q,l}[l^{n}] )^{\vee}  \simeq (\C^{0}_{K, l}[l^{n}] )^{\vee}$ it means that image of the non-zero element $x$ in $(\C^{0}_{K, l}[l^{n}] )^{\vee}$ is zero. Since the second column is exact this means that $x$ lies in the image of the multiplication by $l^n$ map from $(\C^{0}_{K, l})^{\vee} $ to $(\C^{0}_{K, l})^{\vee} $ and therefore there exists $c_x$ such that $l^{n} c_x = x$.
\end{proof}

It means that we have proved:
\begin{cor}\label{cor1}
The exact sequence $1\leftarrow (\mathcal T_{q,l})^{\vee} \leftarrow (\C^{0}_{K, l})^{\vee} \leftarrow (\CL^{0}_{l} (K))^{\vee} \leftarrow 1$ satisfies conditions of the theorem \ref{group}.
\end{cor}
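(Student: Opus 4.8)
The plan is to read the dual sequence appearing in the corollary in its forward direction,
$$1 \to (\CL^{0}_{l}(K))^{\vee} \to (\C^{0}_{K,l})^{\vee} \to (\mathcal T_{q,l})^{\vee} \to 1,$$
and to match it term by term with the abstract setup of Theorem \ref{group}, taking $A = (\CL^{0}_{l}(K))^{\vee}$, $B = (\C^{0}_{K,l})^{\vee}$ and $\oplus_{i} C_{i} = (\mathcal T_{q,l})^{\vee}$. With this dictionary the corollary is nothing more than the verification of the two numbered conditions of that theorem, so I would organise the argument around them.

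For condition (1) I would check that the three groups have the required shapes. Since $\CL^{0}(K)$ is finite, its $l$-primary part $\CL^{0}_{l}(K)$ is a finite abelian $l$-group, and the Pontryagin dual of a finite group is finite of the same type, so $A$ is a finite abelian $l$-group. The group $\C^{0}_{K,l}$ is pro-$l$, hence by the duality between pro-finite and discrete torsion groups $B$ is a discrete torsion abelian $l$-group. Finally, the explicit description recalled just above, $(\mathcal T_{q,l})^{\vee} \simeq \oplus_{k \ge N(l)} \oplus_{\mathbb N} \mathbb Z/l^{k}\mathbb Z$, presents $(\mathcal T_{q,l})^{\vee}$ as a countable direct sum of finite cyclic $l$-groups whose orders $l^{k}$ run through all powers with $k \ge N(l)$ and are therefore unbounded; enumerating these summands as $\{C_{i}\}$ supplies exactly the data required by Theorem \ref{group}.

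The substantive point is condition (2), the equality $A = \cap_{n \ge 1} nB$. Because every group in sight is an $l$-group, $\cap_{n} nB = \cap_{m} l^{m} B$, and I would prove the two inclusions separately, identifying $A$ with its image under the injection. The inclusion $\cap_{m} l^{m} B \subseteq A$ follows by pushing forward along the surjection $B \to (\mathcal T_{q,l})^{\vee}$: the image lands in $\cap_{m} l^{m}(\mathcal T_{q,l})^{\vee}$, which is $\{0\}$ since a direct sum of finite cyclic $l$-groups has no nonzero infinitely $l$-divisible element, so any infinitely divisible element of $B$ lies in the kernel $A$; this is exactly the inclusion already recorded before the corollary. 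The reverse inclusion $A \subseteq \cap_{m} l^{m} B$ is precisely the preceding lemma, which produces for each nonzero $x \in A$ and each $n$ an element $c_{x} \in B$ with $l^{n} c_{x} = x$. Combining the two gives $A = \cap_{m} l^{m} B = \cap_{n} nB$, which is condition (2), and the corollary follows.

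I expect the corollary itself to be essentially bookkeeping: all the arithmetic weight has already been spent in the divisibility lemma, whose proof rests on the total non-splitness of the sequence \ref{es2}. That non-splitness is what forces the connecting map $\C^{0}_{K,l}[l^{n}] \to \CL^{0}_{l}(K)[l^{n}]$ to vanish and the map $\mathcal T_{q,l}[l^{n}] \to \C^{0}_{K,l}[l^{n}]$ to be an isomorphism, and dualizing turns this into the infinite divisibility of the class-group part inside $B$. The only genuine care needed in assembling the corollary is to keep the two notions of divisibility (``by every $n$'' versus ``by every power of $l$'') aligned and to remember that condition (2) refers to $A$ through its image in $B$; neither is a real obstacle once the lemma is granted.
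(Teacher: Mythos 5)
Your proposal is correct and follows essentially the same route as the paper: the same dictionary $A = (\CL^{0}_{l}(K))^{\vee}$, $B = (\C^{0}_{K,l})^{\vee}$, $\oplus_i C_i = (\mathcal T_{q,l})^{\vee}$, the inclusion $\cap_n l^n B \subseteq A$ from the vanishing of $\cap_n l^n(\mathcal T_{q,l})^{\vee}$, and the reverse inclusion from the preceding divisibility lemma. Your explicit checks of condition (1) and of the equality $\cap_n nB = \cap_m l^m B$ are just the bookkeeping the paper leaves implicit.
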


In order to finish our proof we need to prove theorem \ref{group}.

\subsubsection{Proof of the Theorem \ref{group}}

First, let us recall the settings. 

\begin{theorem}
Let $\{ C_i \}$ be a countable set of finite cyclic abelian $l$-groups with orders of $C_i$ are not bounded as $i$ tends to infinity and let $A$ be any finite abelian $l$-group.
Then up to isomorphism there exists and unique torsion abelian $l$-group $B$ satisfying two following conditions:
\begin{enumerate}
	\item There exists an exact sequence: $1 \to A \to B \to \oplus_{i \ge 1} C_i \to 1$;
	\item $A$ is the union of all divisible elements of $B$: $A = \cap_{n \ge 1} nB$.
\end{enumerate} 
\end{theorem}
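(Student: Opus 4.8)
The plan is to prove existence and uniqueness separately, reducing both to the extension class of the sequence together with the single requirement that $A$ be the first Ulm subgroup $B^{(1)} := \bigcap_{n\ge 1} nB = \bigcap_{k\ge 1} l^k B$. First I record a reformulation of condition~2. Writing $\pi\colon B \to \oplus_i C_i$ for the quotient map, one has $\pi(l^k B) = l^k(\oplus_i C_i)$, so $\pi(B^{(1)}) \subseteq \bigcap_k l^k(\oplus_i C_i) = 0$ because a direct sum of finite cyclic groups has no non-zero divisible elements. Hence $B^{(1)} \subseteq A$ holds automatically, and condition~2 is equivalent to the single inclusion $A \subseteq B^{(1)}$. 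Fixing generators $c_i$ of $C_i \cong \mathbb Z/l^{m_i}\mathbb Z$ and lifts $\beta_i \in B$ with $\pi(\beta_i) = c_i$, the extension is encoded by the classes $\alpha_i := l^{m_i}\beta_i \in A$, well defined modulo $l^{m_i}A$, i.e. by an element of $\prod_i A/l^{m_i}A = \operatorname{Ext}(\oplus_i C_i, A)$. Since $\alpha_i = l^{m_i}\beta_i \in l^{m_i}B$, one checks $l^k B \cap A = l^k A + \langle \alpha_i : m_i \ge k\rangle$; as $l^k A = 0$ once $k$ exceeds the exponent $l^e$ of $A$, condition~2 becomes the \emph{cofinal generation condition}: for every $k$ the elements $\alpha_i$ with $m_i \ge k$ generate $A$.

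For existence I first reduce to $A$ cyclic. Since the orders of the $C_i$ are unbounded, partition $\{C_i\}$ into $s$ infinite subfamilies (one per cyclic factor of $A = \oplus_{j=1}^{s} A_j$), each still of unbounded order; a group $B_j$ built for $A_j$ and the $j$-th subfamily then yields $B = \oplus_j B_j$ with $B^{(1)} = \oplus_j B_j^{(1)} = \oplus_j A_j = A$ and quotient $\oplus_i C_i$. For $A = \mathbb Z/l^e\mathbb Z = \langle a\rangle$, choose a subsequence $C_{i_k}$ of strictly increasing orders $l^{m_{i_k}}$, set $D_k = \mathbb Z/l^{m_{i_k}+e}\mathbb Z = \langle b_k\rangle$ with the injection $a \mapsto l^{m_{i_k}}b_k$ (well defined since $l^{m_{i_k}}b_k$ has order exactly $l^e$), and let $B$ be the pushout amalgamating all these copies of $A$, together with the remaining $C_i$ of the subfamily as free direct summands. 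Then $a = l^{m_{i_k}}b_k \in l^{m_{i_k}}B$ for all $k$ gives $a \in B^{(1)}$, the images $\bar b_k$ have order $l^{m_{i_k}}$, so the quotient is $\oplus_i C_i$, and cofinal generation holds; the one genuine computation is $\bigcap_k l^k B = \langle a\rangle$, which follows by tracking divisibility through the amalgamated presentation.

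For uniqueness, suppose $B$ and $B'$ both satisfy the hypotheses; by the reformulation they correspond to extension classes $(\alpha_i)$ and $(\alpha'_i)$ satisfying cofinal generation. The idea is to bring each class to a common normal form under the action of $\operatorname{Aut}(A) \times \operatorname{Aut}(\oplus_i C_i)$: replacing a lift $\beta_i \mapsto \beta_i + x$ with $x\in A$ leaves $\alpha_i$ fixed modulo $l^{m_i}A$, automorphisms of $A$ act on the target, and automorphisms of the direct sum of cyclics $\oplus_i C_i$ (scaling a generator by a unit, permuting factors, and adding an $l^{m_j-m_i}$-multiple of a generator of larger order into one of smaller order) let one ``row reduce'' the family $(\alpha_i)$. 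Fixing a basis $g_1,\dots,g_s$ of $A$, cofinal generation is used to arrange, for each $j$, a cofinal subsequence of indices whose class equals $g_j$ with all other classes zero; this is exactly the normal form of the group produced in the existence step, so matching normal forms gives $B \cong B'$.

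The main obstacle is this uniqueness step: the row reduction must be carried out over the infinite index set simultaneously for all primary pieces of $A$, which requires controlling the large automorphism group of an infinite direct sum of cyclic $l$-groups of varying orders and checking that the resulting infinite change of basis is a genuine automorphism. Finiteness of $A$ is what forces convergence, since only finitely many generators of $A$ need to be tracked, while the cofinal generation condition supplies an inexhaustible reservoir of large-order $C_i$ with which to clear entries; the reduction therefore stabilizes on each finite truncation and assembles into an isomorphism of the whole group.
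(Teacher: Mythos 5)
Your reformulation of condition 2 as the \emph{cofinal generation condition} is correct and is precisely the paper's first lemma in the uniqueness argument (there called ``strong generation''), and your existence construction (reduction to cyclic $A$, then amalgamating $A$ into cyclic groups of strictly growing order, with the automatic inclusion $\bigcap_n nB \subseteq A$ doing the rest) is a sound variant of the paper's construction. The gap is in the uniqueness step. Your plan is to row-reduce an arbitrary cofinally generating family $(\alpha_i)$ to a normal form by elementary automorphisms of $\oplus_i C_i$. But this requires modifying infinitely many coordinates, and the naive sequential composition of elementary moves does not converge: when you fix the entry at index $i$ using helper coordinates $j_t$ with $m_{j_t} \ge m_i$, the generator $c_i$ is sent to $c_i + \sum_t s_t\, l^{m_{j_t}-m_i} c_{j_t}$; at later steps the helpers $c_{j_t}$ are themselves rewritten using still larger helpers, and so on, so the image of the single generator $c_i$ under the infinite composition becomes an infinite formal sum (the coefficients $l^{m_{j}-m_i}$ do not vanish modulo $l^{m_{j}}$), which is not an element of the direct sum. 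Hence the truncations do \emph{not} stabilize, contrary to your closing claim; this convergence failure is not a technicality but the crux of the theorem. Moreover, even granting that each group can be put in normal form, the two normal forms are supported on index sets with different order profiles, and factors of different orders cannot be matched by any permutation automorphism, so ``matching normal forms'' is another instance of the very same problem.

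The paper's proof contains exactly the two devices needed to repair this, and any correct version of your argument must supply equivalents of both. First, the \emph{large overlap} lemma: if two presentations $(a_i)$, $(a'_i)$ agree on a set $S$ of indices such that $\{a_i : i \in S\}$ still cofinally generates $A$, then one obtains an isomorphism by rewriting only the coordinates outside $S$, using helpers inside $S$; since the helpers are never themselves modified, each generator is touched at most once and the change of basis is a genuine automorphism. Second, since $B$ and $B'$ need not agree at any index, the paper constructs a partition $\mathbb N = I_1 \cup I_2$, $I_1 \cap I_2 = \emptyset$, such that $(a_i)_{i\in I_1}$ and $(a'_i)_{i \in I_2}$ each cofinally generate $A$ (an inductive back-and-forth that uses finiteness of $A$), and inserts the interpolating group $D$ presented by $a_i$ on $I_1$ and $a'_i$ on $I_2$; then $D$ has large overlap with both $B$ and $B'$, giving $B \simeq D \simeq B'$. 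In your language: the row reduction can be made to converge, but only in two stages --- first rewrite the coordinates in $I_2$ using never-modified helpers from $I_1$, then rewrite the coordinates in $I_1$ using never-modified helpers from $I_2$. Without the partition and this never-modify-the-helpers discipline, the reduction you describe does not define an automorphism, and the proof is incomplete at its central point.
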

\begin{bf}Proof of the existence.
\end{bf}
Given a group $A$ and $\oplus_{i \ge 1} C_i $ let $k_i$ denotes the order of the group $C_i$. Because of the assumptions of the theorem, the sequence of orders $k_i$ is not bounded and hence for each natural number $N$ there exists $i$ such that $k_i \ge N$. Let us pick a sequence of indexes $j_i$, $i \in N$ such that $k_{j_i} \ge l^{i}$. Let $\alpha_0, \dots, \alpha_{n-1}$ be any finite set of generators of $A$. Consider the sequence $a_m$ of elements of $A$ defined as follows: $$a_{m} = \begin{cases} \alpha_{i \mod n}, & \mbox{if } m = j_i   \\ 0  , & \mbox{ otherwise.} \end{cases}$$

Consider the abelian group $B$ isomorphic to the quotient of the direct sum $A \oplus( \oplus_{i\in \mathbb N} X_i\mathbb Z)$ of countably many copies of $\mathbb Z$ and one copy of $A$ by the relations $k_i X_i = a_{i}$. We have that $B$ contains $A$ as a subgroup and the quotient of $B$ by $A$ is isomorphic to $\oplus_i C_i$. It means that the group $B$ satisfies the first condition of the theorem. Now, consider the group $Z = \cap_{n \ge 1} nB$. Obviously, $Z \subset A$ and we would like to show that actually $Z = A$. This follows from the fact that for any fixed number $N>1$ the set $\{ k_{j_i}X_{j_i} | i \ge \log_l{N} \}$ generates $A$ and satisfies $k_{j_i} \ge l^{i} \ge l^{\log_l(N)} \ge N$.

\begin{bf}Proof of the uniqueness.
\end{bf}
Suppose we are given an abelian torsion $l$-group $B$ which satisfies both conditions of the our theorem.  
Denote the map from $B$ to $ \oplus_{i \ge 1} C_i$ by $\phi$.
Let $\tilde{x}_i$ denotes a generator of the cyclic group $C_i$ and let $k_i$ denotes the order of $C_i$. Let $x_i$ be an element of $B$ such that $\phi(x_i) = \tilde{x}_i$, then $k_i x_i \in A$.

\begin{lem}
For any positive integer $M$ which is a power of $l$ the set $A_M = \{ k_i x_i | k_i \ge M \}$ generates $A$. 
\end{lem}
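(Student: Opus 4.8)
The plan is to prove the nontrivial inclusion only: writing $H_M$ for the subgroup of $A$ generated by $A_M$, each generator $k_i x_i$ lies in $A$ since $\phi(k_i x_i) = k_i \tilde x_i = 0$, so $H_M \subseteq A$ and it remains to show $A \subseteq H_M$. The sole usable input is the second hypothesis, which in this $l$-group setting reads $A = \bigcap_{N \ge 0} l^N B$; thus every $\alpha \in A$ is infinitely $l$-divisible in $B$, and I intend to convert a single sufficiently deep divisibility $\alpha = l^N b$ into an expression of $\alpha$ in terms of the large-order generators $k_i x_i$ alone.

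Concretely, I would first fix $s$ with $l^s A = 0$ and then fix, once and for all, an exponent $N$ with $l^N \ge l^s M$; the crucial point is that $N$ depends only on the fixed data $s$ and $M$. Given $\alpha \in A$, hypothesis (2) furnishes $b \in B$ with $\alpha = l^N b$. Writing $\phi(b) = \sum_{i \in S} c_i \tilde x_i$ over a finite set $S$ of indices, the element $a_0 := b - \sum_{i \in S} c_i x_i$ lies in $\ker \phi = A$, so that $\alpha = l^N a_0 + \sum_{i \in S} l^N c_i x_i$. Because $\phi(\alpha) = 0$ forces $k_i \mid l^N c_i$ for each $i \in S$, I may rewrite every term as $l^N c_i x_i = (l^N c_i / k_i)(k_i x_i)$, an integer multiple of $k_i x_i \in A$.

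It then remains to sort the contributions into three kinds. The term $l^N a_0$ vanishes since $N \ge s$ and $l^s A = 0$. For an index with $k_i \ge M$ the factor $k_i x_i$ is one of the chosen generators of $H_M$, so its multiple lies in $H_M$. For an index with $k_i < M$ the integer $l^N c_i / k_i = l^{\,N - v_l(k_i)} c_i$ is divisible by $l^s$, because $v_l(k_i) < \log_l M \le N - s$; as $l^s$ annihilates $A$ and $k_i x_i \in A$, that entire term is zero. Hence $\alpha$ equals a sum of the $k_i x_i$ with $k_i \ge M$, i.e. $\alpha \in H_M$, yielding $A \subseteq H_M$ and finishing the argument.

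The step I expect to be the genuine obstacle — and the reason for the care above — is an apparent circularity in the choice of $N$: the finite support $S$ is produced only after $b$, hence after $N$, so one cannot demand that $N$ exceed all the orders $k_i$ occurring in $S$. The resolution I would emphasize is that no such demand is needed: killing the unwanted terms requires only $N \ge s + \log_l M$, a bound involving solely $s$ and $M$ and not the (possibly very large) orders $k_i \ge M$, whose terms are absorbed into $H_M$ automatically whatever their size. This is precisely why the threshold $M$ is taken to be a power of $l$, so that $v_l(k_i) < \log_l M$ becomes a clean integer inequality.
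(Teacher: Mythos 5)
Your proof is correct and follows essentially the same route as the paper's: both arguments use the divisibility hypothesis to write an arbitrary $a \in A$ as a sufficiently large power of $l$ times some $b \in B$, lift $b$ through the exact sequence as a finite combination of the $x_i$ plus an element of $A$, kill the $A$-part and the terms with $k_i < M$ using a power of $l$ that annihilates $A$, and convert the surviving terms into integer multiples of $k_i x_i$ via $\phi(a)=0$ and the directness of $\oplus_i C_i$. The only cosmetic difference is the choice of that power: the paper assumes WLOG $M \ge \# A$ and multiplies by $M^2$, while you fix $l^s A = 0$ and take $l^N$ with $N \ge s + \log_l M$, which resolves the same ``choose the exponent before seeing the support'' issue that the paper's WLOG handles.
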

\begin{proof}
Without loss of generality we assume that $M\ge \# A$. Pick a non-zero element $a \in A$. Because of the second property $a$ could be written as $M^2 y$, where $y \in B$. Since the sequence  $1 \to A \to B \to \oplus_{i \ge 1} C_i \to 1$ is exact  we could write $y$ as finite combination of $x_{i_j}$ and elements of $A$: $y= b_{i_1}x_{i_1} + b_{i_2}x_{i_2} +\dots +b_{i_n}x_{i_n} + a_0$. Pick the subset $S$ of $i_1, \dots, i_n$ consisting of indexes of $i_j$ such that $k_{i_j} \ge M$. Since $M^2 x_{i_j} = 0$ if $k_{i_j} < M$ we have : $ M^2 \sum_{j \in S} b_{i_j}x_{i_j} = a$. On the other hand $0 =\phi(a) =  M^2 \sum_{j \in S} b_{i_j} \tilde{x}_{i_j}$ and hence $M^2 b_{i_j} $ is divisible by $ k_{i_j} $ and $a =  \sum_{j \in S} \frac{b_{i_j}M^2} {k_{i_j}} k_{i_j}x_{i_j} $. Which means that $\{ k_i x_i | k_i \ge M \}$ generates $A$.
\end{proof}
\begin{bf}Remark: \end{bf} consider the sequence $a_i = k_i x_i$ of elements of $A$ from the above lemma. We will say that this sequence $\langle a_i \rangle $ \emph{strongly generates }$A$.

 Note that $B$ as abstract abelian group is isomorphic to the group generated by elements $X_i$ and $a_i$ such that $k_i X_i = a_i$: $B = \langle X_i, a_i \rangle / (k_iX_i - a_i)$. Given another abelian group $B'$ satisfying conditions of our theorem we know that $B' = \langle X'_i, a'_i \rangle / (k_iX'_i - a'_i)$. If for any $i$ we have $a_i = a'_i$ as elements of $A$ then, obviously $B \simeq B'$. Our goal is to show that $B \simeq B'$ in any case.

\begin{defin}
Given two such groups $B$, $B'$ consider the set $S = \{ i | a_i = a'_i \}$. We will say that $B$ and $B'$ have \emph{large overlap} if the set $\{a_i | i \in S  \}$ strongly generates $A$, i.e. that for any integer $M$ the set $S_M = \{a_i | i \in S, k_i \ge M  \} $ generates $A$. 
\end{defin}

 We have the following observation:

\begin{lem}
If $B$ and $B'$ have large overlap, then they are isomorphic.
\end{lem}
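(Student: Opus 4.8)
The plan is to present both $B$ and $B'$ as quotients of one and the same group and then to carry one relation module onto the other by an automorphism. Write $F = A \oplus \bigoplus_{i \ge 1}\mathbb{Z}X_i$ and, using the presentations recorded above, identify $B = F/R$ and $B' = F/R'$, where $R = \langle\, k_iX_i - a_i : i \ge 1 \,\rangle$ and $R' = \langle\, k_iX_i - a_i' : i \ge 1 \,\rangle$ (after relabelling the generators $X_i'$ of $B'$ as $X_i$). It then suffices to produce an automorphism $\Theta$ of $F$ with $\Theta|_A = \id$ and $\Theta(R') = R$: such a $\Theta$ descends to an isomorphism $B' = F/R' \xrightarrow{\sim} F/R = B$.

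To build $\Theta$ I would leave the overlapping generators untouched, setting $\Theta(X_i) = X_i$ for $i \in S$, and correct the remaining ones. Fix $i \notin S$. Then $a_i' - a_i \in A$, and since $B$ and $B'$ have large overlap the set $\{\, a_j : j \in S,\ k_j \ge k_i \,\}$ generates $A$; hence we may write $a_i' - a_i = \sum_j c_j a_j$ as a \emph{finite} sum over such indices $j$. Because everything in sight is an $l$-group, the orders $k_i, k_j$ are powers of $l$, so $k_j \ge k_i$ forces $k_i \mid k_j$; thus $d_j := c_j k_j / k_i \in \mathbb{Z}$, and I set $\Theta(X_i) = X_i + \sum_j d_j X_j$. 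Every correction involves only the generators $X_j$ with $j \in S$, which $\Theta$ fixes, so $\Theta$ is \emph{unitriangular} and therefore an automorphism of $F$, with inverse obtained by negating the corrections.

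It remains to check $\Theta(R') = R$. A direct computation, using $\sum_j c_j k_j X_j = \sum_j c_j(k_jX_j - a_j) + (a_i' - a_i)$, gives
\[
\Theta(k_iX_i - a_i') = (k_iX_i - a_i) + \sum_{j} c_j\,(k_jX_j - a_j),
\]
which lies in $R$; for $i \in S$ this reads simply $\Theta(k_iX_i - a_i') = k_iX_i - a_i$. Hence $\Theta(R') \subseteq R$. Conversely, the images with $i \in S$ already recover all generators $k_jX_j - a_j$ with $j \in S$, and subtracting the corresponding combinations from the images with $i \notin S$ recovers the remaining generators $k_iX_i - a_i$; thus these images generate $R$, so $\Theta(R') = R$ and $B \simeq B'$.

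I expect the main point to watch is the integrality of the corrections $d_j = c_j k_j/k_i$: this is exactly where both hypotheses are used, namely that we are dealing with $l$-groups (so that $k_j \ge k_i$ gives $k_i \mid k_j$) and that large overlap lets us express $a_i' - a_i$ through the $a_j$ with $j \in S$ and $k_j$ arbitrarily large. The only other thing to verify carefully is that $\Theta$ is a genuine automorphism of the infinitely generated group $F$; this causes no trouble because each $\Theta(X_i)$ differs from $X_i$ by a finite combination of the \emph{fixed} generators indexed by $S$, so $\Theta$ is locally finite and invertible.
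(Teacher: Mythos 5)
Your proof is correct and is essentially the paper's own argument: you fix the generators indexed by the overlap set $S$ and correct each $X_i$, $i \notin S$, by the integer combination $\sum_j (c_j k_j/k_i) X_j$, which is exactly the paper's map $\psi(X_i) = X'_i + \sum_{m \in S} \lambda_m \frac{k_m}{k_i} X'_m$, relying on the same two points (large overlap to express $a_i' - a_i$ through overlap generators of large order, and $l$-power orders to make $k_j/k_i$ an integer). The only difference is presentational: you package the construction as a single unitriangular automorphism $\Theta$ of $F = A \oplus \bigoplus_{i} \mathbb{Z}X_i$ carrying $R'$ onto $R$ and then descend, whereas the paper writes down the induced map $\psi$ and its inverse $\phi$ directly on the quotients.
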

\begin{proof}
For each index $i$ consider the difference $a_i - a'_i$. Since $B$ and $B'$ have large overlap, we could write this difference as finite sum $\sum_{m \in S} \lambda_m k_m X'_m$ with $k_m \ge k_i$. Since both $k_m$ and $k_i$ are powers of $l$ the ratio $\frac{k_m}{k_i}$ is an integer.
Consider the map $\psi$ from $B$ to $B'$ defined as follows. The map $\psi$ is identity on $A$. If $i \in S$ then $\psi( X_i ) = X'_i$, otherwise $\psi(X_i) = X'_i + \sum_{m \in S} \lambda_m \frac{k_m}{k_i} X'_m$. We claim that $\psi$ is a homomorphism: if $i \in S$ then $a_i = \psi(k_i X_i ) = k_i \psi (X_i) = k_i X'_i = a'_i $. If $i \not \in S$, we have $a_i = \psi(k_i X_i)= k_i (X'_i + \sum_{m \in S} \lambda_m \frac{k_m}{k_i} X'_m) =  k_i (X'_i) + \sum_{m \in S} \lambda_m k_m X'_m = a'_i + (a_i - a'_i) = a_i$. In other words it sends generators of $B$ to elements of $B'$ preserving all relations. 
We claim moreover that the map $\psi$ is an isomorphism since we will construct the inverse map $\phi$ from $B'$ to $B$ as follows. The map $\phi$ is identity on $A$. For $i \in S$ we have $\phi( X'_i) = X_i$ and for $i \not \in S$ we have $\phi(X'_i) = X_i - \sum_{m \in S} \lambda_m \frac{k_m}{k_i} X_m$. Then, for $i \not \in S$ we have: 
$$\phi( \psi(X_i) ) = \phi ( X'_i + \sum_{m \in S} \lambda_m \frac{k_m}{k_i} X'_m )  = \phi(X'_i) + \phi(\sum_{m \in S} \lambda_m \frac{k_m}{k_i} X'_m) = $$
$$ = (X_i - \sum_{m \in S} \lambda_m \frac{k_m}{k_i} X_m ) + (\sum_{m \in S} \lambda_m \frac{k_m}{k_i} X_m) = X_i.$$
  
\end{proof}

Now we will prove: 

\begin{cor}
Two groups $B$ and $B'$ satisfying conditions of the above theorem are isomorphic.
\end{cor}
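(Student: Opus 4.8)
The plan is to deduce the general case from the already-established large-overlap lemma by interpolating a third group between $B$ and $B'$. By the discussion preceding that lemma, any group satisfying the two conditions admits a presentation $B = \langle X_i, a_i \rangle / (k_i X_i - a_i)$ in which the sequence $\langle a_i \rangle$ of elements of $A$ strongly generates $A$; write likewise $B' = \langle X_i', a_i' \rangle / (k_i X_i' - a_i')$ with $\langle a_i' \rangle$ strongly generating $A$. In general there is no reason for $a_i$ and $a_i'$ to agree on a strongly generating set of indices, so the large-overlap lemma cannot be applied to the pair $(B, B')$ directly. Instead I would build an auxiliary sequence $\langle a_i'' \rangle$ that coincides with $\langle a_i \rangle$ on a strongly generating set of indices and with $\langle a_i' \rangle$ on another such set, and then show that the resulting group $B''$ has large overlap with both $B$ and $B'$.

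To construct $\langle a_i'' \rangle$, first I would partition the index set as $\mathbb N = P \sqcup Q$ so that $\{ a_i : i \in P \}$ and $\{ a_i' : i \in Q \}$ each strongly generate $A$. This is possible precisely because the orders $k_i$ are unbounded, so that for every power $M$ of $l$ there are in fact infinitely many indices with $k_i \ge M$. Fixing an increasing sequence of thresholds $M_1 < M_2 < \cdots$ tending to infinity and processing them in turn, at each stage $t$ I can select a finite block of so-far-unused indices $i$ with $k_i \ge M_t$ whose values $a_i$ generate $A$, assign it to $P$, and then select a further disjoint finite block of unused indices with $k_i \ge M_t$ whose values $a_i'$ generate $A$, assigning it to $Q$ (only finitely many indices are used at any stage, so infinitely many admissible ones always remain). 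Dumping all leftover indices into $P$, I set $a_i'' = a_i$ for $i \in P$ and $a_i'' = a_i'$ for $i \in Q$, and let $B'' = \langle X_i'', a_i'' \rangle / (k_i X_i'' - a_i'')$.

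It remains to verify that $B''$ satisfies the hypotheses of the theorem and to apply the lemma. The exact sequence $1 \to A \to B'' \to \oplus_i C_i \to 1$ holds by construction, and since $\{ a_i'' : i \in P \} = \{ a_i : i \in P \}$ already strongly generates $A$, so does the full sequence $\langle a_i'' \rangle$; by the computation in the existence proof this yields $A = \cap_n n B''$, so $B''$ meets both conditions. Now the overlap set of $(B, B'')$ contains $P$ and that of $(B'', B')$ contains $Q$, and each of $\{ a_i : i \in P \}$ and $\{ a_i' : i \in Q \}$ strongly generates $A$; hence $B$ and $B''$ have large overlap, and so do $B''$ and $B'$. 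Applying the large-overlap lemma twice gives $B \simeq B'' \simeq B'$. The main obstacle is the combinatorial step of splitting the indices into two simultaneously strongly generating halves; the crucial point that makes it work is that unboundedness of the $k_i$ supplies infinitely many indices above every threshold, leaving room to reserve disjoint generating blocks for each side.
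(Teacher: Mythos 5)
Your proposal is correct and follows essentially the same route as the paper: both construct a partition of the index set into two parts on which $\{a_i\}$ and $\{a_i'\}$ respectively strongly generate $A$ (using increasing thresholds and the fact that discarding finitely many indices preserves strong generation), form the hybrid group from that partition, and conclude by applying the large-overlap lemma twice, giving $B \simeq B'' \simeq B'$. The only differences are cosmetic details of how the partition is built (the paper uses consecutive intervals with minimal endpoints $N_m$, you select disjoint finite blocks and dump the leftovers into one side).
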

\begin{proof}
Suppose that there exists a partition of the set of positive integers $\mathbb N$ on two sets $\mathbb N = I_1 \cup I_2$, $I_1 \cap I_2 =\emptyset$ such that each of the set $\{a_i | i \in I_1 \} $ and $\{ a'_i | i\in I_2 \}$ strongly generates $A$. Then we define abelian group $D$ to be the quotient of the direct sum $A \oplus( \oplus_{i\in \mathbb N} X_i\mathbb Z)$ of countably many copies of $\mathbb Z$ and one copy of $A$ by the relations $k_i X_i = a_{i}$, $i \in I_1$ and $k_i X_i = a'_i$, $i \in I_2$. Obviously $D$ also satisfies conditions of the above theorem. Moreover $D$ and $B$ and also $D$ and $B'$ have large overlap, therefore $B \simeq D \simeq B'$. 

Now we will show that such partition exists. 
We will construct this partition inductively. Let $N_0 =0$ and let $N_1$ be the minimal integer such that elements of the set $S_1 = \{ a_i  | i \le N_1$ and $k_i \ge l \}$ generate $A$. The reason for this number to exists is the following. The sequence $a_i$ strongly generates $A$ which implies that there exist indexes $i$ with $k_i \ge l$ such that $a_i$ generate $A$, but $A$ is a finite group and hence we could pick a finite number of elements with $k_i \ge l$ generating $A$. Note that dropping out finitely many indexes doesn't affect the fact that each of the sequences $a_i$ and $a'_i$ strongly generates $A$. Suppose we've constructed the number $N_m$ then let $N_{m+1}$ be a minimal integer such that elements of the set $$S_{m+1} = \begin{cases}  \{a'_i |  N_m < i \le N_{m+1}$ and $k_i \ge l^{m+1}  \}, & \mbox{if } m \mbox{ is odd}  \\ \{a_i |  N_m < i \le N_{m+1} $ and $k_i \ge l^{m+1}  \}, & \mbox{ otherwise.} \end{cases}$$  generate $A$. Finally, we define $I_1 = \cup_{m \ge 0} \{i \in \mathbb N | N_{2m} < i \le N_{2m+1}   \}$ and $I_2 = \cup_{m \ge 1} \{i \in \mathbb N | N_{2m-1} < i \le N_{2m}   \}$.
\end{proof}

\section{Proof of Corollaries}
In this section we will prove corollaries \ref{corl1}, \ref{corl2} and \ref{corl3}. The first two will follow from the existence for a given constant field $k=\mathbb F_q$ an elliptic curve $E$ over $k$ with the group $E (\mathbb F_q)$ of $\mathbb F_q$-rational points having order $q$, since in the case of elliptic curves we have $E (\mathbb F_q) \simeq \CL^{0}(K_{E})$, where $K_{E}$ denotes the associated to $E$ global function field.

\begin{defin}
Fix a finite field $\mathbb F_q$. Let $N$ be an integer number in the Hasse interval: $N \in [ -2\sqrt{q} ; 2\sqrt{q} ]$. We will call it admissible if there exists an elliptic curve $E$ over $\mathbb F_q$ with $q+1 - \#  E( \mathbb F_q) = N$.  
\end{defin}

The following statement is a part of the classical statement due to Waterhouse, for reference see ~\cite{Schoof}: 
\begin{theorem}[Waterhouse] 
If $\GCD(p,N)=1$ then the number $N$ is admissible. 
\end{theorem}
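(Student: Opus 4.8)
The plan is to exhibit $N$ as the trace of the Frobenius endomorphism of an explicitly constructed ordinary elliptic curve. Set $f(x) = x^2 - Nx + q$ and let $\pi = \tfrac{1}{2}\left(N + \sqrt{N^2 - 4q}\right)$ be a root of $f$. Since $\GCD(p,N)=1$ forces $N^2 \ne 4q$ (otherwise $N = 2p^{n/2}$ and $p \mid N$), while $|N| \le 2\sqrt{q}$ gives $N^2 - 4q \le 0$, we obtain $N^2 - 4q < 0$ strictly. Hence $L = \mathbb Q(\pi)$ is an imaginary quadratic field and $\pi$ is an algebraic integer with $\pi\bar\pi = q$ and absolute value $\sqrt q$ under every complex embedding, i.e. a Weil $q$-number. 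The characteristic polynomial $f$ is exactly what we want Frobenius to satisfy, so that $q + 1 - \#E(\mathbb F_q) = \pi + \bar\pi = N$.

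First I would check that $p$ splits in $L$. Because $\pi\bar\pi = q = p^n$ while $\pi + \bar\pi = N$ with $p \nmid N$, the elements $\pi$ and $\bar\pi$ cannot share a common prime divisor above $p$: such a prime would divide their sum $N$, which is coprime to $p$. Therefore $p\,\mathcal O_L = \mathfrak p\,\bar{\mathfrak p}$ splits, with $\mathfrak p$ dividing $\pi$ and $\bar{\mathfrak p}$ dividing $\bar\pi$. This splitting is precisely the condition distinguishing ordinary from supersingular reduction, and it is where the hypothesis $\GCD(p,N)=1$ is essential.

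Next, using the theory of complex multiplication I would produce an elliptic curve over a number field with CM by the order $\mathbb Z[\pi] \subseteq \mathcal O_L$; its $j$-invariant is an algebraic integer lying in the appropriate ring class field of $L$. Choosing a prime above $\mathfrak p$ and reducing, Deuring's reduction theorem yields an ordinary elliptic curve over the residue field whose Frobenius endomorphism is a power of $\pi$. The remaining task is to arrange that the residue field is exactly $\mathbb F_q$ and that the Frobenius is $\pi$ itself rather than a proper power, so that the trace comes out to be $N$ and not $\pi^r + \bar\pi^{\,r}$ over a larger field.

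The hard part will be exactly this last piece of bookkeeping, since CM reduction naturally lands a curve over $\mathbb F_{p^m}$ for an $m$ governed by the residue degree and the class number. The cleanest way to sidestep the descent is to invoke Honda--Tate theory, which gives a bijection between isogeny classes of elliptic curves over $\mathbb F_q$ and conjugacy classes of Weil $q$-numbers; one then need only verify that the ordinary Weil number $\pi$ constructed above is effective and corresponds to a one-dimensional abelian variety, which follows from $L$ being imaginary quadratic and $p$ being split. Either route produces an elliptic curve over $\mathbb F_q$ with trace of Frobenius equal to $N$, so $N$ is admissible.
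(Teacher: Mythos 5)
Your proof is correct, but it necessarily takes a different route from the paper, because the paper does not prove this statement at all: it invokes it as a classical theorem of Waterhouse, pointing to Schoof's survey for a reference, and for its corollaries it only ever needs the single value $N=1$, for which the hypothesis $\gcd(p,N)=1$ is automatic. What you give is essentially the standard modern proof of the ordinary case of Waterhouse's theorem, and the individual steps are sound: $\gcd(p,N)=1$ rules out $N^2=4q$, so $\pi$ is a Weil $q$-number generating an imaginary quadratic field $L$; any prime of $L$ above $p$ dividing both $\pi$ and $\bar\pi$ would divide $N=\pi+\bar\pi$, which forces $p$ to split as $\mathfrak p\,\bar{\mathfrak p}$ with, say, $\ord_{\mathfrak p}(\pi)=n$ and $\ord_{\bar{\mathfrak p}}(\pi)=0$; and Honda--Tate theory then attaches to the conjugacy class of $\pi$ a simple abelian variety over $\mathbb F_q$ with Frobenius $\pi$. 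The one place a referee would ask you to expand is the dimension count: \emph{$L$ imaginary quadratic and $p$ split} is a slogan, and the actual content is that the endomorphism algebra $D$ of the Honda--Tate variety has local invariant $\frac{\ord_v(\pi)}{\ord_v(q)}[L_v:\mathbb Q_p]$ at each place $v$ above $p$, which here equals $1$ at $\mathfrak p$ and $0$ at $\bar{\mathfrak p}$ (an integer in both cases, precisely because of the unit/full-valuation dichotomy you established), and $L$ has no real places; hence all invariants vanish in $\mathbb Q/\mathbb Z$, so $D=L$ and $2\dim A=[L:\mathbb Q]\sqrt{[D:L]}=2$, giving an elliptic curve whose trace of Frobenius is $\pi+\bar\pi=N$. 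You are also right that the Deuring/CM-lifting route (Waterhouse's original argument) needs extra bookkeeping to control the residue field, and that Honda--Tate sidesteps this cleanly. Comparing the two approaches: the paper's citation costs nothing and suffices for its purposes, while your argument is self-contained modulo Honda--Tate and makes visible that $\gcd(p,N)=1$ is exactly the condition that the sought-for curve be ordinary, which is also why the supersingular values of $N$ in Waterhouse's full classification require a separate analysis.
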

\begin{cor}
Given a finite field $\mathbb F_q$ there exists an elliptic curve $E$ over $\mathbb F_q$ with $\# E(\mathbb F_q) = q$.
\end{cor}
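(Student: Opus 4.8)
The plan is to specialize Waterhouse's theorem to a single well-chosen value of the trace of Frobenius. Recall that for an elliptic curve $E$ over $\mathbb F_q$ the integer $N = q + 1 - \# E(\mathbb F_q)$ is precisely the quantity appearing in the definition of admissibility above. Hence the desired condition $\# E(\mathbb F_q) = q$ is equivalent to $N = 1$, and the whole corollary reduces to showing that $N = 1$ is admissible for every prime power $q$.

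First I would verify that $N = 1$ actually lies in the Hasse interval, so that it is a legitimate candidate for admissibility. Since $q = p^n \ge 2$ is a prime power, we have $2\sqrt q \ge 2\sqrt 2 > 1$, and therefore $1 \in [-2\sqrt q\,;\, 2\sqrt q]$. Next I would check the hypothesis of Waterhouse's theorem: the condition $\GCD(p, N) = 1$ becomes $\GCD(p, 1) = 1$, which holds trivially for every prime $p$.

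Applying the theorem then yields that $N = 1$ is admissible, which by definition means there exists an elliptic curve $E$ over $\mathbb F_q$ with $q + 1 - \# E(\mathbb F_q) = 1$, that is, $\# E(\mathbb F_q) = q$. Combined with the isomorphism $E(\mathbb F_q) \simeq \CL^{0}(K_E)$ recalled at the start of this section, this also produces an elliptic function field with $\# \CL^{0}(K_E) = q$, as needed for Corollaries \ref{corl1} and \ref{corl2}. I do not anticipate any real obstacle: the only nontrivial input is Waterhouse's existence statement, and the coprimality condition it requires is automatic because the target trace $N = 1$ is a unit modulo every $p$.
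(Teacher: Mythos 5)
Your proposal is correct and is exactly the argument the paper intends: the corollary is stated as an immediate consequence of Waterhouse's theorem, obtained by taking $N=1$, which lies in the Hasse interval and trivially satisfies $\GCD(p,N)=1$. Your additional check that $1 \le 2\sqrt{q}$ and the link back to $\CL^{0}(K_E)$ simply make explicit what the paper leaves implicit.
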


The above remarks finish the proof of corollaries \ref{corl1} and \ref{corl2}.  Now we will discuss the proof of the corollary \ref{corl3}. Our goal is to show :
\begin{theorem}\label{maincor}
Given a constant field $k=\mathbb F_q$ with characteristic $p \ne 2$ there are infinitely many non-isomorphic curves $X$ over $k$ with different two-part of the group of $k$-rational points on the Jacobian varieties associated to them. 
\end{theorem}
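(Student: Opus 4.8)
The plan is to realize the required curves as hyperelliptic function fields and to control the $2$-torsion of their Jacobians by genus theory. Fix the constant field $\mathbb F_q$ with $p\ne 2$. For each integer $t\ge 1$ I would choose $t$ distinct monic irreducible polynomials $f_1,\dots,f_t\in\mathbb F_q[x]$ and set $g_t=f_1\cdots f_t$; this is possible because $\mathbb F_q[x]$ has infinitely many monic irreducibles, and I may and do arrange $\deg g_t$ to be odd. Since $p\ne 2$ and $g_t$ is squarefree and not a square, the affine curve $y^2=g_t(x)$ is smooth and geometrically irreducible, so the function field $K_t=\mathbb F_q(x)[y]/(y^2-g_t(x))$ is a separable quadratic extension of $\mathbb F_q(x)$ with exact constant field $\mathbb F_q$. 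Let $X_t$ be the smooth projective model, so that $\CL^0(K_t)\simeq\Jac(X_t)(\mathbb F_q)$.

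The key input is the function-field analogue of Gauss's theory of genera for the quadratic extension $K_t/\mathbb F_q(x)$. The places of $\mathbb F_q(x)$ that ramify are exactly those attached to the monic irreducible factors $f_1,\dots,f_t$ (the place at infinity contributing only through the parity of $\deg g_t$, which I have fixed to be odd so that infinity is a single rational Weierstrass place). The classes of the $t$ rational ramified places generate a subgroup of $\CL^0(K_t)[2]$, and a single relation among them comes from the principal divisor of $y$. Genus theory then gives a lower bound $\dim_{\mathbb F_2}\CL^0(K_t)[2]\ge t-1$, and in particular this $2$-rank is unbounded as $t\to\infty$.

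I would then conclude by an invariance argument. The $\mathbb F_2$-dimension of the $2$-torsion subgroup $G[2]$ of a finite abelian group $G$ depends only on the isomorphism type of its $2$-part, so two curves whose Jacobians have different $2$-ranks have non-isomorphic $2$-parts of $\CL^0$. Since $\dim_{\mathbb F_2}\CL^0(K_t)[2]$ takes arbitrarily large values along the family $\{X_t\}$, infinitely many of the $2$-parts of $\CL^0(K_t)$ are pairwise non-isomorphic; after discarding repetitions I obtain infinitely many curves over $\mathbb F_q$ with pairwise distinct two-parts, which is exactly the assertion of the theorem. Combined with Theorem \ref{main}, these produce infinitely many isomorphism types of $\mathcal G^{ab}_K$ over the fixed $\mathbb F_q$, establishing Corollary \ref{corl3}.

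The main obstacle is the second step: I must use a rank formula counting $\mathbb F_q$-rational (not merely geometric) $2$-torsion, and I must verify that taking the $f_i$ irreducible over $\mathbb F_q$ genuinely forces the associated $2$-torsion classes to be rational. The delicate points are the correction coming from the place at infinity (handled by the parity normalization $\deg g_t$ odd) and the possible extra contribution measured by $H^1(\Gal(\overline{\mathbb F}_q/\mathbb F_q),\mathbb Z/2\mathbb Z)$, which could push the exact $2$-rank from $t-1$ up to $t$. Since I only need a lower bound that tends to infinity, this ambiguity is harmless, but it is precisely where the classical results on the two-part of $\CL^0(K)$ of hyperelliptic function fields (see \cite{Func1}) are required.
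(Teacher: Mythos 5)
Your proposal is correct and takes essentially the same route as the paper: both construct hyperelliptic function fields $y^2 = f_1(x)\cdots f_t(x)$ with a growing number of monic irreducible factors, obtain a lower bound linear in $t$ for the $2$-rank of $\CL^{0}(K_t)$, and conclude that infinitely many isomorphism types of two-parts (hence of curves) occur over the fixed $\mathbb F_q$. The only difference is presentational: the paper cites Theorem~1.4 of \cite{cornelhyp} for the rank bound $\ge m-2$ (and adds a superfluous Weil upper bound), whereas you sketch the underlying genus-theoretic argument directly, using the odd-degree normalization at infinity to get the bound $\ge t-1$.
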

\begin{proof}
For any positive integer $n$ there exists a monic irreducible polynomial of degree $n$ with coefficients in $\mathbb F_q$. Let us pick any sequence of such polynomials $D_n(x)$, $n \in \mathbb N$ with the property that $\deg(D_{n+1}(x) ) > \deg(D_n(x))$ and $\deg(D_1(x)) \ge 3$. Consider the family of affine curves defined by the equation $C_m: y^2 = D_1(x)D_2(x) \dots D_m(x)$. Since $D_i$, $i \in \mathbb N$ are mutually distinct these affine curves are smooth. Let $X_m$ denotes the normalization of the projective closure of $C_m$. Then $X_m$ is a hyper-elliptic curve of the genus $g_m = \lfloor\frac{\deg(D_1(x)) + \dots + \deg(D_m(x)) -1 } {2} \rfloor$. The Weil-bound insures that the order of the group of $\mathbb F_q$-rational points of the Jacobian variety $J_m$ associated to $X_m$ satisfies the following: $$ (\sqrt{q} - 1)^{2g_m}  \le \#J_m(\mathbb F_q) \le (\sqrt{q} +1 )^{2g_m}, $$
and therefore the two-part of $J_m(\mathbb F_q)$ is bounded from above by $(\sqrt{q} +1 )^{2g_m}$. 
On the other hand theorem 1.4 from \cite{cornelhyp} states that the  two-rank of $J_m(\mathbb F_q)$ is at least $m-2$. Therefore, among the family $X_m$ there are infinitely many curves with different two-part of the group $J_m(\mathbb F_q) $ and therefore their function fields $K_m$ have non-isomorphic $\mathcal G^{ab}_{K_m}$.
\end{proof}
\newpage

\bibliography{mybib}{}
\bibliographystyle{plain}

\newpage

\tableofcontents

\end{document}